\documentclass[reqno, 12pt]{amsart}

\usepackage{amsmath}
\usepackage{amsfonts}
\usepackage{amssymb}
\usepackage{mathrsfs}
\usepackage{amsthm}
\usepackage{mathtools}
\usepackage{bbm}
\usepackage[bbgreekl]{mathbbol}
\usepackage{graphicx, color}
\usepackage{tikz}
\usetikzlibrary{cd, intersections, calc, decorations.pathmorphing, arrows, decorations.pathreplacing}
\usepackage[all]{xy}

\DeclareSymbolFontAlphabet{\mathbb}{AMSb}
\DeclareSymbolFontAlphabet{\mathbbl}{bbold}

\usepackage{stackengine}
\usepackage{calc}
\newlength\shlength
\newcommand\xshlongvec[2][0]{\setlength\shlength{#1pt}%
  \stackengine{-5pt}{$#2$}{\smash{$\kern\shlength%
    \stackengine{7.1pt}{$\mathchar"017E$}%
      {\rule{\widthof{$#2$}}{.57pt}\kern.4pt}{O}{r}{F}{F}{L}\kern-\shlength$}}%
      {O}{c}{F}{T}{S}}

\definecolor{refkey}{rgb}{0.9451,0.2706,0.4941}
\definecolor{labelkey}{rgb}{0.9451,0.2706,0.4941}

\usepackage{subfiles}
\usepackage[colorlinks, linkcolor=blue, citecolor=red, urlcolor=cyan,pagebackref]{hyperref} 

\textwidth16cm \textheight22cm \headheight12pt
\oddsidemargin.4cm \evensidemargin.4cm \topmargin0cm

\numberwithin{equation}{section}

\usepackage{cleveref}
\crefname{thm}{Theorem}{Theorems}
\crefname{cor}{Corollary}{Corollaries}
\crefname{lem}{Lemma}{Lemmas}
\crefname{sublem}{Sublemma}{Sublemmas}
\crefname{prop}{Proposition}{Propositions}
\crefname{dfn}{Definition}{Definitions}
\crefname{defi}{Definition}{Definitions}
\crefname{ex}{Example}{Examples}
\crefname{claim}{Claim}{Claims}
\crefname{conj}{Conjecture}{Conjectures}
\crefname{conv}{Convention}{Conventions}
\crefname{rem}{Remark}{Remarks}
\crefname{rmk}{Remark}{Remarks}
\crefname{figure}{Figure}{Figures}
\crefname{section}{Section}{Sections}
\crefname{table}{Table}{Tables}

\newtheorem{thm}{Theorem}[section]
\newtheorem{prop}[thm]{Proposition}
\newtheorem{cor}[thm]{Corollary}
\newtheorem{lem}[thm]{Lemma}

\theoremstyle{definition}
\newtheorem{dfn}[thm]{Definition}
\newtheorem{defi}[thm]{Definition}
\newtheorem{ex}[thm]{Example}

\theoremstyle{remark}
\newtheorem{rmk}[thm]{Remark}
\newtheorem{rem}[thm]{Remark}
\newtheorem{prob}[thm]{Problem}

\DeclareMathOperator{\precceq}{\text{$\preccurlyeq$}}
\DeclareMathOperator{\precov}{\text{$\prec\!\!\!\cdot$}}
\DeclareMathOperator{\succceq}{\text{$\succcurlyeq$}}
\DeclareMathOperator{\succov}{\text{$\cdot\!\!\!\succ$}}

\newcommand{\ot}{\leftarrow}

\newcommand*{\chom}{\mathcal{H}\kern -.5pt om}

\newcommand{\bZ}{\mathbb{Z}}
\newcommand{\bQ}{\mathbb{Q}}
\newcommand{\bR}{\mathbb{R}}
\newcommand{\bC}{\mathbb{C}}

\newcommand{\bS}{\mathbb{S}}

\newcommand{\bE}{\mathbb{E}}

\newcommand{\A}{\mathcal{A}}
\newcommand{\cA}{\mathcal{A}}

\newcommand{\cC}{\mathcal{C}}

\newcommand{\cF}{\mathcal{F}}

\newcommand{\X}{\mathcal{X}}
\newcommand{\cX}{\mathcal{X}}


\newcommand{\sgn}{\mathrm{sgn}}
\newcommand{\trop}{\mathrm{trop}}
\newcommand{\stab}{\mathrm{stab}}

\newcommand{\tr}{\mathsf{T}}

\DeclareMathOperator{\interior}{\mathrm{int}}
\DeclareMathOperator{\bdim}{\mathbf{dim}}

\newcommand{\bs}{{\boldsymbol{s}}}

\newcommand{\ve}{b}
\newcommand{\bep}{\boldsymbol{\epsilon}}

\newcommand{\indi}{i}

\newcommand{\indk}{k}

\newcommand{\bx}{\mathbf{x}}
\newcommand{\bX}{\mathbf{X}}

\newcommand{\bExch}{\bE \mathrm{xch}}

\newcommand{\Teich}{Teichm\"uller}

\makeatletter
\newcommand{\oset}[3][0ex]{%
  \mathrel{\mathop{#3}\limits^{
    \vbox to#1{\kern-2\ex@
    \hbox{$\scriptstyle#2$}\vss}}}}
\makeatother
\newcommand{\overbar}[1]{\oset{#1}{-\!\!\!-\!\!\!-}}

\makeatletter
\newcommand{\osetnear}[3][0ex]{%
  \mathrel{\mathop{#3}\limits^{
    \vbox to#1{\kern-.3\ex@
    \hbox{$\scriptstyle#2$}\vss}}}}
\makeatother
\newcommand{\overbarnear}[1]{\osetnear{#1}{-\!\!\!-\!\!\!-}}

 
\newcommand\qarrowop[2]{\draw[->,>=latex,shorten >=2pt,shorten <=2pt] (#2) -- (#1) [thick];} 

\tikzset{
  mid arrow/.style={postaction={decorate,decoration={
        markings,
        mark=at position .5 with {\arrow[#1]{stealth}}
      }}},
}

\setcounter{tocdepth}{1}%

\title[Entropy and the trichotomy of acyclic quivers]
{Entropy of cluster DT transformations and\\ the finite-tame-wild trichotomy of acyclic quivers}

\author{Tsukasa Ishibashi}
\address{Tsukasa Ishibashi, Mathematical Institute, Tohoku University, 
6-3 Aoba, Aramaki, Aoba-ku, Sendai, Miyagi 980-8578, Japan.}
\email{tsukasa.ishibashi.a6@tohoku.ac.jp}
\urladdr{https://sites.google.com/view/tsukasa-ishibashi/home} 

\author{Shunsuke Kano}
\address{Shunsuke Kano, Mathematical Science Center for Co-creative Society, Tohoku University, 
468-1 Aoba, Aramaki, Aoba-ku, Sendai, Miyagi 980-0845, Japan.}
\email{s.kano@tohoku.ac.jp}
\urladdr{https://sites.google.com/view/shunsuke-kano} 

\date{\today}

\begin{document}
\maketitle

\begin{abstract}
The cluster algebra associated with an acyclic quiver has a special mutation loop $\tau$, called the cluster Donaldson--Thomas (DT) transformation, related to the Auslander--Reiten translation.
In this paper, we characterize the finite-tame-wild trichotomy for acyclic quivers by the sign stability of $\tau$ introduced in \cite{IK19} and its cluster stretch factor. 
As an application, we compute several kinds of entropies of $\tau$ and other mutation loops. 
In particular, we show that the algebraic and categorical entropies of $\tau$ are commonly given by the 
logarithm of the spectral radius
of the Coxeter matrix associated with the quiver, and that any mutation loop of finite or tame acyclic quivers have zero algebraic entropy.
\end{abstract}

\tableofcontents


\section{Introduction}

From a quiver $Q$, one can construct several interesting algebraic/geometric objects. One such construction is the \emph{cluster algebra} formulated by Fomin--Zelevinsky \cite{FZ-CA1}, and the \emph{cluster variety} formulated by Fock--Goncharov \cite{FG09} as its geometric counterpart. 

The cluster algebra/variety associated with a quiver $Q$ has a natural automorphism group called the \emph{cluster modular group} \cite{FG09}, whose elements are represented by  
sequences of mutations 
\begin{align}\label{eq:mutation_loop}
    Q = Q_0 \overbar{\mu_{k_0}} Q_1 \overbar{\mu_{k_1}} \cdots \overbar{\mu_{k_{n-1}}} Q_n, \quad Q_n = Q
\end{align}
that return back to the original quiver. 

\subsection{Sign stability}
When the quiver $Q$ is associated with an ideal triangulation of a marked surface $\Sigma$, the corresponding cluster theory is beautifully connected to the \Teich\ theory \cite{Penner,FST}, where the cluster modular group coincides with the mapping class group of $\Sigma$ (up to finite index) \cite{BS}.  
In the previous work \cite{IK19}, the authors introduced a new property of mutation sequences \eqref{eq:mutation_loop} called the \emph{sign stability}, as a cluster-theoretic analogy of the pseudo-Anosov mapping classes of a surface. The sign stability is defined by resembling some combinatorial property of pseudo-Anosov mapping classes, and it has been shown that the sign stability well reproduces some of the important dynamical properties of the pseudo-Anosov mapping classes \cite{IK19,IK20a} 

We have several variants of the notion of sign stability, each of which has its own advantage. 
The \emph{basic} sign stability is a kind of the weakest version which is sufficient to compute
\begin{itemize}
    \item the algebraic entropy of the corresponding cluster transformations \cite{IK19}, and 
    \item the categorical entropy of some corresponding derived equivalence of the Ginzburg dg algebra of the quiver (with potential) \cite{Kan21}.
\end{itemize}
A mutation sequence \eqref{eq:mutation_loop} with the basic sign stability has the numerical invariant called the \emph{cluster stretch factor}.
Then the entropies above are commonly given by the logarithm of the cluster stretch factor.
We remark that the basic sign stability also corresponds to a weaker version of the \emph{asymptotic sign coherence} \cite{GN}. (To be written in \cite{IK}.)

\subsection{Finite-tame-wild trichotomy for acyclic quivers}


A quiver $Q$ is said to be \emph{acyclic} if it has no oriented cycles. In this case, one can construct its \emph{path algebra}, and its representation theory has been studied by many authors. 
In particular, every acyclic quiver is classified into the following three types depending on the representation theory of its path algebra: \emph{representation finite}, \emph{tame}, or \emph{wild}.
The latter two types are collectively called \emph{representation infinite}.
The trichotomy -- representation finite, tame or wild -- is characterized in several ways:
\begin{itemize}
    \item The underlying graph $\Delta$ of $Q$ is Dynkin, affine (\emph{a.k.a.} Euclidean) or of other types \cite{Gab,DF,Naz}.
    \item The Cartan matrix $A$ of $Q$ is positive definite, positive semi-definite, or negative definite \cite{Kac}.
    \item The supremum of the multiplicities of the arrows occurring in all the quivers mutation-equivalent to $Q$ is 1, 2, or at least 3 \cite{Kel_tri}.
    \item The frieze variety of $Q$ is of dimension 0, 1 or at least 2 \cite{LLMSS}.
\end{itemize}

\subsection{Statements}
In this paper, we give a new characterization of the finite-tame-wild trichotomy for acyclic quivers in terms of the sign stability.

It is known that every acyclic quiver admits a special mutation sequence \eqref{eq:mutation_loop} called a \emph{reddening sequence}. Explicitly, it is given by the mutation sequence $\gamma_{\pi}$ associated with any \emph{admissible labeling} $\pi$ of the vertices of $Q$. 
It represents the canonical central element in the cluster modular group called the \emph{cluster Donaldson--Thomas (DT) transformation}. 
The cluster DT transformation gives the Kontsevich--Soibelman's noncommutative DT invariant \cite{KS} associated with the quiver \cite{Kel_der}, and also corresponds to the Auslander--Reiten translation on the derived category of the path algebra \cite{ASS12}. 

Here is our main statement:
\begin{thm}[{\cref{thm:trich}}]
Let $Q$ be an acyclic quiver, and $\gamma_\pi$ be the mutation sequence associated with any admissible labeling $\pi$ of $Q$.
Then,
\begin{enumerate}
    \item $Q$ is representation finite if and only if $\gamma_\pi$ is not basic sign-stable.
    \item $Q$ is tame if and only if $\gamma_\pi$ is basic sign-stable with cluster stretch factor 1.
    \item $Q$ is wild if and only if $\gamma_\pi$ is basic sign-stable with cluster stretch factor larger than 1.
\end{enumerate}
\end{thm}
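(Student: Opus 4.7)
My plan is to analyze the tropical dynamics induced by $\gamma_\pi$ through its linearization, which, in suitable coordinates, is governed by the Coxeter matrix $\Phi$ of $Q$. The essential input is the classical trichotomy for Coxeter matrices of acyclic quivers: the spectral radius $\rho(\Phi)$ equals $1$ with $\Phi$ of finite order in the Dynkin case, equals $1$ with $\Phi$ having a non-trivial Jordan block (hence infinite order) in the affine case, and strictly exceeds $1$ in the wild case.

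First, I would show that, when $\pi$ is an admissible labeling, the action of $\gamma_\pi$ along a generic ray of the tropical $y$-space is linear and coincides with $\Phi$. The key input is that $\gamma_\pi$ is a reddening sequence, so on its first iterate the tropical sign at each mutation is negative; combined with acyclicity and admissibility of $\pi$, this should pin down the iterated sign pattern and reduce the $n$-th tropical iterate on a generic seed to the matrix product $\Phi^n$. This identification reduces the question of basic sign stability of $\gamma_\pi$ to the question of whether the sign vector $\sign(\Phi^n v)$ stabilizes as $n\to\infty$ for generic $v$, and identifies the cluster stretch factor with the exponential growth rate $\lim_n \|\Phi^n v\|^{1/n} = \rho(\Phi)$.

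Combining this linear reduction with the trichotomy yields the three statements. In the Dynkin case, $\Phi^h = I$ for the Coxeter number $h$, so $\Phi^n v$ cycles through finitely many vectors; choosing $v$ generic enough to avoid coordinate hyperplanes, the sign pattern is strictly periodic and hence cannot stabilize, giving (1). In the affine case, $\Phi$ has spectral radius $1$ with a rank-two Jordan block at the eigenvalue $1$, so $\Phi^n v$ grows polynomially in $n$ along a fixed generalized eigenvector; the sign pattern then stabilizes while the cluster stretch factor remains $1$, giving (2). In the wild case, a Perron--Frobenius-type argument yields a simple dominant eigenvalue $\rho(\Phi) > 1$ with an eigenvector all of whose entries are non-zero, so $\sign(\Phi^n v)$ stabilizes to the sign pattern of this eigenvector and the cluster stretch factor equals $\rho(\Phi) > 1$, giving (3).

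The main technical hurdle, I expect, is the identification of the tropical dynamics of $\gamma_\pi$ with iteration of $\Phi$: one has to verify that along iterated applications of $\gamma_\pi$ from a generic seed, the sign sequence of the intermediate mutations stays in the same maximal linearity chamber as prescribed by the admissible labeling, so that the piecewise linear tropical dynamics actually coincides with a single linear map. The affine case is also delicate, since the Jordan block only produces polynomial rather than exponential separation between the dominant and sub-dominant eigendirections, and the finite case requires checking that periodicity yields genuinely distinct sign vectors rather than an accidentally constant one; an admissible-labeling plus generic-seed argument, together with explicit information about the eigenvectors of $\Phi$ coming from the underlying Lie-theoretic data, should resolve both.
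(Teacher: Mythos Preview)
Your high-level strategy---identify the stable presentation matrix with the Coxeter matrix $\Phi$ and then read off the trichotomy from the spectral trichotomy for $\Phi$---is the paper's as well, and your treatment of the finite case via periodicity is close to correct (though note the sign on $\cC^+_{(v_0)}$ is $(+,\dots,+)$, since the admissible sequence is maximal green, not all-red; together with the sign $(-,\dots,-)$ on $\cC^-_{(v_0)}$ and the finite order of $\tau$, this already rules out stabilization without any genericity hypothesis). But there is a genuine gap in the representation-infinite case, at precisely the step you call the ``main technical hurdle,'' and your proposed resolution is circular.

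The action of $\tau$ coincides with $\Phi$ only on the cone where the sign of $\gamma_\pi$ is $(-,\dots,-)$; this sign is $(\sgn\xi_i(w))_i$, not $(\sgn x_i(w))_i$, where the $\xi_i$ are the intermediate-coordinate PL functions. Once the orbit leaves that cone the dynamics is a different PL piece, so you cannot invoke asymptotics of $\Phi^n v$ (Jordan block, Perron--Frobenius eigendirection) to bring it back: those arguments presuppose the orbit is governed by $\Phi$, which is exactly what must be shown. The paper breaks this circularity with a representation-theoretic input you do not supply: for $Q$ representation-infinite, every entry of $M^\tr\Phi^n$ is non-negative for \emph{all} $n\ge0$, because its $i$th row is the dimension vector of the preinjective module $\tau^n I_i$, which is a nonzero module. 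Since $\xi_i(w)=\sum_\ell m_{\ell i}x_\ell(w)$ on the $(-,\dots,-)$ cone, this exact (not asymptotic) inequality forces $\xi_i(\tau^n(w))\le0$ for every $n\ge0$ and every $w\in\cC^-_{(v_0)}$, so the orbit never leaves the cone and the stable sign is literally $(-,\dots,-)$ with stable presentation matrix $\Phi$. This is the missing idea; without it the reduction to the linear dynamics of $\Phi$ is unjustified. (Note also that basic sign stability concerns every $w\in\interior\cC^+_{(v_0)}\cup\interior\cC^-_{(v_0)}$, not a generic ray, and that the quantity whose sign must stabilize is $M^\tr\Phi^n v$, not $\Phi^n v$.)
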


In particular, we can compute the entropies of the cluster DT transformation $\tau$
and its inverse $\tau^{-1}$ as we mentioned above:

\begin{thm}[\cref{thm:ent_tau}]
Let $Q$ be a representation infinite acyclic quiver.
Then, the following quantities coincide with each other:
\begin{itemize}
    \item the logarithms of the cluster stretch factors $\lambda_\tau$, $\lambda_{\tau^{-1}}$;
    \item the algebraic entropies $h_\mathrm{alg}(\tau_a^{\pm 1})$, $h_\mathrm{alg}(\tau_x^{\pm 1})$ of the associated cluster $\A$- and $\X$-transformations, respectively;
    \item the categorical entropies $h_T(F_{\tau^{\pm 1}}|_{\mathsf{D}_\mathsf{fd}})$, $h_0(F_{\tau^{\pm 1}}|_{\mathsf{per}})$ of the associated derived autoequivalences;
    \item the spectral radius $\rho(\Phi)$ of the Coxeter matrix $\Phi$ of $Q$.
\end{itemize}
\end{thm}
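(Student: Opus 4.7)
By the trichotomy theorem just stated, when $Q$ is representation infinite the DT mutation sequence $\gamma_\pi$ is basic sign-stable, with cluster stretch factor $\lambda_\tau=1$ in the tame case and $\lambda_\tau>1$ in the wild case; the same holds for the inverse sequence $\gamma_\pi^{-1}$, which represents $\tau^{-1}$. So the plan is to feed this basic sign stability into the already-existing machinery that converts it into entropies, and then to identify $\lambda_\tau$ with $\rho(\Phi)$ by a direct tropical computation.

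The equalities $\log\lambda_{\tau^{\pm1}}=h_\mathrm{alg}(\tau_a^{\pm1})=h_\mathrm{alg}(\tau_x^{\pm1})$ follow immediately from the main result of \cite{IK19}, which asserts that under basic sign stability, the algebraic entropy of the induced cluster $\A$- and $\X$-transformation is the logarithm of the cluster stretch factor. Similarly, $\log\lambda_{\tau^{\pm1}}=h_T(F_{\tau^{\pm1}}|_{\mathsf{D}_\mathsf{fd}})=h_0(F_{\tau^{\pm1}}|_{\mathsf{per}})$ follows from the analogous statement in \cite{Kan21} applied to the derived autoequivalence of the Ginzburg dg algebra induced by $\tau^{\pm1}$. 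Thus the entire theorem reduces to the single identity $\lambda_\tau=\rho(\Phi)$ (and its inverse-counterpart, which is automatic since $\rho(\Phi)=\rho(\Phi^{-1})$ and the Coxeter matrix is conjugate-related for reversed orderings).

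For the remaining identity, I would compute the tropical $\X$-action of $\tau$ on its stable sign region and show that, after a linear identification of the cocharacter lattice with the root lattice of $Q$, this action coincides with a Coxeter element of the Weyl group of the underlying Kac--Moody root system. Concretely, with $\pi$ an admissible (source-first) labeling, the signs along $\gamma_\pi$ at any point in the positive cone are constantly $+$ by the source property, and each tropical $\X$-mutation at a source of the current quiver is a simple reflection $s_i$ on the weight lattice; their composition in the admissible order is the Coxeter element $c=s_{i_n}\cdots s_{i_1}$. The matrix of $c$ on the root lattice is (conjugate to) the Coxeter matrix $\Phi=-C^{-\tr}C$, so its spectral radius equals $\rho(\Phi)$. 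Since this linear map coincides with the iteration matrix governing $\lim_{n\to\infty}\|(d\tau)^n\mathbf{v}\|^{1/n}$ in the stable region, this growth rate equals $\rho(\Phi)$, proving $\lambda_\tau=\rho(\Phi)$.

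The main obstacle, and therefore the step I would write in most detail, is the last identification of the tropical linear action with the Coxeter element. One must verify that (i) the positive cone really lies in the stable sign region of $\gamma_\pi$ (so that the signs are actually constant and the tropical dynamics is genuinely linear along iterates), (ii) the resulting linear map coincides with $c$ in an identification compatible with the usual definition of $\Phi$, and (iii) the cluster stretch factor, defined as a growth rate of tropical iterates, indeed equals the spectral radius of this matrix and not only an upper bound; this last point requires that a generic vector in the stable region has nontrivial projection onto the Perron--Frobenius eigenspace of $c$, which needs a short argument using the Kac cone/positive root data in each of the tame and wild cases.
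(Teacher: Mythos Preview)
Your overall strategy---feed basic sign stability into \cite{IK19} and \cite{Kan21}, then identify $\lambda_\tau$ with $\rho(\Phi)$---matches the paper's, but there are two genuine gaps.

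First, the results you invoke from \cite{IK19} and \cite{Kan21} (stated here as \cref{thm:Perron-Frobenius}-adjacent Theorems in \cref{subsec:entropy}) require the \emph{palindromicity property} of the stable presentation matrix as a hypothesis, not merely basic sign stability. The paper checks this separately (\cref{lem:palind_Coxeter}) via the identity $M^\tr \Phi^{-1}(M^{-1})^\tr=\Phi^\tr$; you never address it.

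Second, and more seriously, your tropical identification is carried out on the wrong sign region. You write that on the positive cone the signs along $\gamma_\pi$ are constantly $+$ and that each mutation at a source is then a simple reflection. The first claim is correct, but the second is not: with sign $+$ at a source $k$ one has $[\,+\,b_{ik}]_+=0$ for all $i\neq k$, so the linear map is simply $x_k\mapsto -x_k$ and the composite along $\gamma_\pi$ is $-\mathrm{Id}$ (this is exactly \cref{cor:clusterDT}). The stable sign is $(-,\dots,-)$, reached after $\tau$ sends $\cC^+_{(v_0)}$ into $\cC^-_{(v_0)}$; it is the sign $-$ at a source that produces $x'_i=x_i+a_{ki}x_k$, i.e.\ the reflection-like formula. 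The paper bypasses your Coxeter-element argument entirely: \cref{cor:SS_infinite} already identifies the stable presentation matrix with $\Phi$ (and \cref{cor:SS_infinite_inv} with $\Phi^{-1}$), so $\lambda_{\tau^{\pm1}}=\rho(\Phi^{\pm1})$ is immediate from the definition of the cluster stretch factor, with $\rho(\Phi)=\rho(\Phi^{-1})$ following from \eqref{eq:Phi_check}. Your point (iii) about needing a Perron--Frobenius argument is then unnecessary, since \cref{thm:Perron-Frobenius} is built into the definition of $\lambda_\tau$.
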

In addition, we show that the algebraic entropies of any mutation loop of a representation finite or tame acyclic quiver are zero (\cref{thm:ent_vanish_tame}).

\smallskip
\paragraph{\textbf{A proposal of generalization}}
To prove our main results, we use some specific properties for acyclic quivers.
However, the statement itself can be generalized for any quivers having cluster DT transformations.
We conclude this introduction by proposing the following problem:

\begin{prob}
Formulate and prove a finite-tame-wild trichotomy for any quivers having cluster DT transformations via sign stability and their cluster stretch factor.
\end{prob}
We expect that existing works on the quivers of finite mutation type \cite{FST12,FSTT14,Ish20,GK21} will be effective in characterizing/defining the `tame' class.




\subsection*{Organization of the paper}
In \cref{sec:SS}, we recall basic notions in the theory of cluster algebras and the definition of sign stability.
In \cref{sec:DT_Coxeter}, we prove the sign stability of the cluster Donaldson--Thomas transformations of representation infinite acyclic quivers.
The main theorem is proved in \cref{sec:trich}.
In \cref{sec:dyn}, we compute several kinds of entropies and study the dynamical property of the cluster Donaldson--Thomas transformation.
Some lemmas are proved in \cref{sec:proofs} by using the representation theory of quivers.

\subsection*{Acknowledgements}
The authors are grateful to Atsushi Takahashi for giving us the definitive comment on the proof of \cref{lem:M^tr*Phi,lem:M*Phi^-1}.
T. I. is supported by JSPS KAKENHI (20K22304).
S. K. is partially supported by scientific research support of the Research Alliance Center for Mathematical Sciences and Mathematical Science Center for Co-creative Society, Tohoku University.
\section{Sign stability}
\label{sec:SS}

In this section, we recall the basic terminologies around cluster algebras and the notion of sign stability.

\subsection{Seeds, mutations and the labeled exchange graph}\label{subsec:seeds}

Fix a finite set $I=\{0,\dots,N-1\}$ of indices and a field $\cF_X$ isomorphic to the field $\bQ(z_0,\dots,z_{N-1})$ of rational functions on $N$ variables. 
A \emph{(labeled) seed} in $\cF_X$ is a triple $(Q,\bx)$, where
\begin{itemize}
    \item $Q$ is a quiver having no loops nor 2-cycles, whose vertices being parametrized by the set $I$.
    \item $\bX=(X_i)_{i \in I}$ is a tuple of algebraically independent elements (called the \emph{cluster $\X$-variables}) in $\cF_X$.
\end{itemize}
The data of quiver $Q$ is encoded in the skew-symmetric matrix $B = (b_{ij})_{i,j\in I}$ (called the \emph{exchange matrix}), defined by 
\begin{align*}
    b_{ij} := \#\{\text{the arrows from $i$ to $j$ in $Q$}\} - \#\{\text{the arrows from $j$ to $i$ in $Q$}\}.
\end{align*}
We will identify them when no confusion can occur.

For an index $k \in I$, the \emph{mutation} directed to $k$ produces a new seed $(Q', \bX') = \mu_k(Q,\bX)$ by the formula
\begin{align}
    \ve'_{ij} &:= 
    \begin{cases}
    -\ve_{ij} & \mbox{if $i=k$ or $j=k$}, \\
    \ve_{ij} + [\ve_{ik}]_+ [\ve_{kj}]_+ - [-\ve_{ik}]_+ [-\ve_{kj}]_+ & \mbox{otherwise},
    \end{cases} \label{eq:matrix mutation}\\
    X'_i&:= 
    \begin{cases}
    X_k^{-1} & i=k,\\
    X_i\,(1 + X_k^{-\mathrm{sgn}(\ve_{ik})})^{-\ve_{ik}} & i \neq k.
  \end{cases} \label{eq:X-transf}
\end{align}
A permutation $\sigma \in \mathfrak{S}_{I}$ produces a new seed $(Q',\bX')=\sigma(Q,\bX)$ by the rule
\begin{align}\label{eq:seed_permutation}
    \ve'_{ij}:=\ve_{\sigma^{-1}(i),\sigma^{-1}(j)}, \quad 
    X'_i:=X_{\sigma^{-1}(i)}.
\end{align}
We say that two seeds in $\cF_X$ are \emph{mutation-equivalent} if they are transformed to each other by a finite sequence of mutations and permutations. The equivalence class is usually called a \emph{mutation class}. 

\begin{dfn}
The relations among the seeds in a given mutation class $\bs$ can be encoded in the \emph{(labeled) exchange graph} $\bExch_\bs$. It is a graph with vertices $v$ corresponding to the seeds $\bs^{(v)}$ in $\bs$, together with labeled edges of the following two types:
\begin{itemize}
    \item (horizontal edge) edges of the form $v \overbar{k} v'$ whenever the seeds $\bs^{(v)}$ and $\bs^{(v')}$ are related by the mutation $\mu_k$ for $k \in I$;
    \item (vertical edge) edges of the form $v \overbarnear{\sigma} v'$ whenever the seeds $\bs^{(v)}$ and $\bs^{(v')}$ are related by the transposition $\sigma=(j\ k)$ for $(j,k) \in I \times I$.
\end{itemize}
An edge path in $\bExch_\bs$ corresponds to the usual notion of a \emph{mutation sequence}. 
When no confusion can occur, we simply denote a vertex of the labeled exchange graph by $v \in \bExch_\bs$ instead of $v \in V(\bExch_\bs)$. 
\end{dfn}

We will write $\bs^{(v)} = (Q^{(v)}, \bX^{(v)})$ for $v \in \bExch_\bs$.

\begin{rem}[mutation class from a quiver] \label{ld:attachment_generation}
Given a quiver $Q$ labeled by $I$, consider the seed $(Q, (X_i)_{i \in I})$ in $\cF_X:=\bQ(X_i \mid i \in I)$ and its mutation class $\bs_Q$.
Then the labeled exchange graph $\bExch_{\bs_Q}$ depends only on the mutation class of $Q$. Indeed, it is unchanged if we transform the cluster variables simultaneously by an automorphism of the ambient field. We put $Q=Q^{(v_0)}$ at a vertex $v_0$ of this graph, and call it the \emph{initial quiver} of the mutation class $\bs_Q$. 
\end{rem}

\begin{ex}[Type $A_2$]\label{ex:A2_exch}
Let us consider the quiver $Q = (0 \to 1)$ of type $A_2$.
Namely, let $I := \{0, 1\}$ and consider the exchange matrix $B = \begin{psmallmatrix} 0 & 1 \\ -1 & 0 \end{psmallmatrix}$.
Then, the labeled exchange graph $\bExch_\bs$ of the mutation class $\bs = \bs_Q$ with initial seed $\bs^{(v_0)} = (Q, (X_0, X_1))$ is a finite graph as shown in \cref{f:A_2 exch}.
In this figure, red (resp. blue) vertices correspond to the seeds with the underlying quiver $Q$ (resp. $Q' = (0 \ot 1)$).

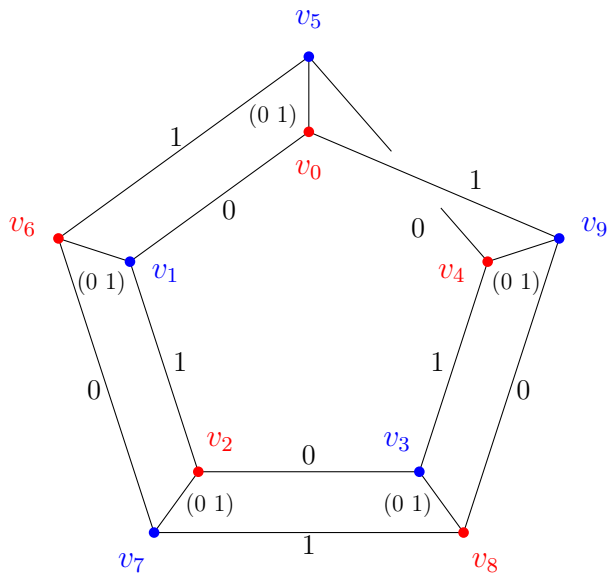
\begin{figure}[h]
    \centering
\begin{tikzpicture}
\foreach \i in {0,1,2,3,4} 
{
    \draw (72*\i+90:2.5) coordinate(A\i);
    \draw (72*\i+90:3.5) coordinate(B\i);
}
\foreach \i in {0,1,2,3} 
{
    \draw (72*\i+90:2.5) -- (72*\i+90+72:2.5);
    \draw (72*\i+90:3.5) -- (72*\i+90+72:3.5);
}
\draw (A4) -- (B0);
\fill[white] ($(A4)!0.4!(B0)$) circle(0.5cm);
\draw (B4) -- (A0);
\foreach \i in {0,1,2,3,4} \draw (A\i) -- (B\i);
\foreach \i in {0,2,4} 
{
    \fill[red] (A\i) circle(2pt);
    \fill[blue] (B\i) circle(2pt);
    \draw (A\i)++(72*\i+90:-0.5) node[red]{$v_\i$};
    \pgfmathsetmacro{\j}{int(\i+5)}
    \draw (B\i)++(72*\i+90:0.5) node[blue]{$v_\j$};
}
\foreach \i in {1,3} 
{
    \fill[blue] (A\i) circle(2pt);
    \fill[red] (B\i) circle(2pt);
    \draw (A\i)++(72*\i+90:-0.5) node[blue]{$v_\i$};
    \pgfmathsetmacro{\j}{int(\i+5)}
    \draw (B\i)++(72*\i+90:0.5) node[red]{$v_\j$};
}
\foreach \i in {1,3}
{
    \draw(72*\i+90-36:1.8) node[scale=0.9]{$0$};
    \draw(72*\i+90-36:3) node[scale=0.9]{$1$};
}
\foreach \i in {2,4}
{
    \draw(72*\i+90-36:1.8) node[scale=0.9]{$1$};
    \draw(72*\i+90-36:3) node[scale=0.9]{$0$};
}
\draw(90-50:1.9) node[scale=0.9]{$0$};
\draw(90-50:2.9) node[scale=0.9]{$1$};
\draw(0+100:2.75) node[scale=0.75]{$(0\ 1)$};
\draw(72+98:2.8) node[scale=0.75]{$(0\ 1)$};
\draw(-72+82:2.8) node[scale=0.75]{$(0\ 1)$};
\draw(144+98:2.8) node[scale=0.75]{$(0\ 1)$};
\draw(-144+82:2.8) node[scale=0.75]{$(0\ 1)$};
\end{tikzpicture}
    \caption{The labeled exchange graph of type $A_2$.}
    \label{f:A_2 exch}
\end{figure}
\end{ex}

\subsection{Tropical cluster variety}

Usually, we first introduce the notion of cluster variety (a positive scheme), and then the tropical cluster variety is defined as the set of its $\bR^\trop$-valued points, where $\bR^\trop=(\bR,\min,+)$ denotes the tropical semifield.
However, we do not use the cluster variety itself in the main part of this paper, so we directly introduce the tropical cluster variety.
The following definition is equal to the above mentioned one.

Let $f(X_0, \dots, X_{N-1})$ be a positive rational function on $N$ variables (namely, a rational function admitting a subtraction-free expression). 
Then its tropical limit $f^\trop(x_0, \dots, x_{N-1})$ is defined by
\begin{align}\label{eq:trop_limit}
    f^\trop(x_0, \dots, x_{N-1}) 
    := \lim_{\epsilon \to -0} \epsilon \log f(e^{x_0/\epsilon}, \dots, e^{x_{N-1}/\epsilon}),
\end{align}
which is a piecewise linear function on $\bR^I$. 
Observe that the cluster $\X$-transformation formula \eqref{eq:X-transf} is an $N$-component positive rational function. Its tropical limit $\bx \mapsto \bx'$, which is explicitly given by
\begin{align}\label{eq:X_trop_mut}
    x'_i =
    \begin{cases}
        -x_k & \mbox{if } i = k,\\
        x_i - \ve_{ik} \min\{0, -\sgn(\ve_{ik}) x_k\} & \mbox{if } i \neq k,
    \end{cases}
\end{align}
is called the \emph{tropical cluster $\X$-transformation}.

\begin{defi}\label{def:X^trop}
The \emph{tropical $\X$-variety} $\X_\bs(\bR^\trop)$ associated with a mutation class $\bs$ is a piecewise linear manifold homeomorphic to $\bR^I$, equipped with the distinguished atlas consisting of global charts $\bx^{(v)}: \cX_\bs(\bR^\trop) \xrightarrow{\sim} \bR^I$ parametrized by the vertices $v \in \bExch_\bs$ such that the coordinate transformations among them are given by the tropical cluster $\cX$-transformations and permutations.
\end{defi}

We note that there is an $\bR_{>0}$-action on $\cX_\bs(\bR^\trop)$ so that 
\begin{align*}
    \bx^{(v)}(t \cdot w) = t \bx^{(v)}(w)
\end{align*}
for any $t \in \bR_{>0}$, $w \in \cX_\bs(\bR^\trop)$ and $v \in \bExch_\bs$.

\subsection{Cluster modular group}\label{subsec:cluster_modular}
Given a mutation class $\bs$ of seeds, let $\mathrm{Quiv}_\bs$ denote the mutation class of quivers underlying $\bs$. Then we have a map 
\begin{align*}
    Q^\bullet: V(\bExch_\bs) \to \mathrm{Quiv}_\bs, \quad v \mapsto Q^{(v)}.
\end{align*}

\begin{dfn}
The \emph{cluster modular group} $\Gamma_\bs \subset \mathrm{Aut}(\bExch_\bs)$ consists of graph automorphisms $\phi$ which preserve the fibers of the map $Q^\bullet$ and the labels on the edges (in particular, the horizontal/vertical properties). An element of the cluster modular group is called a \emph{mutation loop}. 
\end{dfn}

The cluster modular group acts on the tropical cluster $\cX$-variety $\cX_\bs(\bR^\trop)$ piecewise-linearly 
so that
\begin{align*}
    \bx^{(v)}_i(\phi(w)) = \bx^{(\phi^{-1}(v))}_i(w)
\end{align*}
for any $\phi \in \Gamma_\bs$, $w \in \X_\bs(\bR^\trop)$, $v \in \bExch_\bs$ and $i \in I$. 

For an explicit computation of the action in terms of coordinates, the following description is useful. 
Given $\phi \in \Gamma_\bs$ and a vertex $v_0 \in \bExch_\bs$, there exists an edge path $\gamma$ from $v_0$ to $v:=\phi^{-1}(v_0)$.
We call $\gamma$ a \emph{representation path} of $\phi$. 
Associated to $\gamma$ is a sequence $\mu_\gamma$ of mutations and permutations, satisfying the condition $Q^{(v)}=Q^{(v_0)}$. 
Then, the action $\phi: \cX_\bs(\bR^\trop) \to \cX_\bs(\bR^\trop)$ fits into the following diagram:
\begin{equation*}
\begin{tikzcd}
    \cX_\bs(\bR^\trop) \ar[r, equal] \ar[d, "\bx^{(v_0)}"] & \cX_\bs(\bR^\trop) \ar[r, "\phi"] \ar[d, "\bx^{(v)}"] & \cX_\bs(\bR^\trop) \ar[d, "\bx^{(v_0)}"]\\
    \bR^I \ar[r, "\mu_\gamma"] & \bR^I \ar[r, "\sim"] & \bR^I
\end{tikzcd}
\end{equation*}
Here the right bottom isomorphism is given by $x_i^{(v_0)} \mapsto x_i^{(v)}$ for all $i \in I$.

We say that an edge path $\gamma$ in $\bExch_\bs$ is \emph{horizontal} if it only traverses horizontal edges. The examples of mutation loops on which we focus in this paper always admit horizontal representation paths. For the sake of simplicity, we will recall the sign stability only for such mutation loops in the next section.  

\begin{ex}[Type $A_2$]\label{ex:A2_mut_loop}
We continue to consider the mutation class $\bs$ in \cref{ex:A2_exch}.
One can verify that the cluster modular group $\Gamma_\bs$ is isomorphic to $\bZ/5\bZ$, and its generator $\phi$ is given by the composition of the $2\pi/5$-rotation $v_i \mapsto v_{i+1}$ and the involution such that $v_i \mapsto v_{i+5}$. It has a representation path $v_0 \overbar{0} v_1 \overbar{(0\ 1)} v_6$. 
Note that $\phi^2$ is the $4\pi/5$-rotation  $v_i \mapsto v_{i+2}$, which admits a horizontal representation path $v_0 \overbar{0} v_1 \overbar{1} v_2$.
\end{ex}

\begin{rem}
Given a mutation loop $\phi \in \Gamma_\bs$, the coordinate expression $\phi_{(v_0)}$ depends only on the initial vertex $v_0$ but not on the representation path $\gamma$. On the other hand, $\gamma$ gives its factorization to a composite of cluster transformations. The notion of \emph{sign stability} will be defined for the latter data. 
\end{rem}

\subsection{Sign stability}\label{subsec:sign stability}

Now recall the tropical cluster $\X$-transformation $\mu_k = \bx^{(v')} \circ (\bx^{(v)})^{-1}: \bR^I \to \bR^I$ associated with an edge $v \overbar{k} v'$ in $\bExch_\bs$.
For a real number $a \in \bR$, let $\sgn(a)$ denote its sign:
\[
\sgn(a):=
\begin{cases}
    + & \mbox{ if } a>0,\\
    0 & \mbox{ if } a=0,\\
    - & \mbox{ if } a<0.
\end{cases}
\]
We say that $\sgn(a)$ is \emph{strict} if $\sgn(a)\neq 0$. 
The following expression is useful in the sequel:

\begin{lem}[{\cite[Lemma 3.1]{IK19}}]\label{l:x-cluster signed}
The tropical cluster $\X$-transformation of its coordinates can be written as
\begin{align}\label{eq:sign x-cluster}
    x^{(v')}_\indi =
\begin{cases}
    -x^{(v)}_\indk & \mbox{if $\indi=\indk$}, \\
    x^{(v)}_\indi+[\sgn(x^{(v)}_\indk) b^{(v)}_{\indi\indk}]_+x^{(v)}_\indk & \mbox{if $\indi \neq \indk$}.
\end{cases}
\end{align}
\end{lem}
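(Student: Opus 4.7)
The plan is to derive \eqref{eq:sign x-cluster} directly from the already-established tropical mutation formula \eqref{eq:X_trop_mut} by a routine sign case analysis. The $i = k$ case is immediate, so the content lies entirely in showing
\[
-\ve^{(v)}_{ik}\,\min\{0,\,-\sgn(\ve^{(v)}_{ik})\,x^{(v)}_k\}\; =\; \bigl[\sgn(x^{(v)}_k)\,\ve^{(v)}_{ik}\bigr]_+\, x^{(v)}_k
\]
for $i \neq k$, where I use the shorthand $[a]_+ := \max\{a,0\}$.

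First I would dispose of the degenerate cases: if $\ve^{(v)}_{ik} = 0$ both sides vanish trivially, and the same holds if $x^{(v)}_k = 0$ (since $\min\{0,0\} = 0$ on the left and the right carries a factor of $x^{(v)}_k$). Then I would treat the remaining four sign combinations $(\sgn \ve^{(v)}_{ik},\sgn x^{(v)}_k) \in \{+,-\}^2$ in parallel, observing that on the left hand side the factor $\min\{0,-\sgn(\ve^{(v)}_{ik})\,x^{(v)}_k\}$ is nonzero precisely when $\sgn(\ve^{(v)}_{ik})$ and $\sgn(x^{(v)}_k)$ agree, in which case it equals $-\sgn(\ve^{(v)}_{ik})\,x^{(v)}_k$; while on the right hand side the factor $[\sgn(x^{(v)}_k)\,\ve^{(v)}_{ik}]_+$ is nonzero under exactly the same condition, and then equals $\sgn(x^{(v)}_k)\,\ve^{(v)}_{ik} = \sgn(\ve^{(v)}_{ik})\,\ve^{(v)}_{ik} \cdot \sgn(x^{(v)}_k)\sgn(\ve^{(v)}_{ik})^{-1}$, simplifying to $|\ve^{(v)}_{ik}|$. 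A direct substitution then matches the two sides term by term.

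There is no real obstacle here; this is a bookkeeping lemma that rewrites the piecewise linear mutation in a form whose sign dependence on the single tropical coordinate $x^{(v)}_k$ is manifest. The only mild care needed is to confirm that the strict sign boundaries (either $\ve^{(v)}_{ik}=0$ or $x^{(v)}_k=0$) give the same value from both expressions, which ensures the identity holds on all of $\bR^I$ and not merely on the open strata.
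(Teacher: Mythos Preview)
Your proof is correct; the case analysis is complete and the boundary cases are handled. The paper itself does not supply a proof of this lemma but simply cites \cite[Lemma~3.1]{IK19}, so there is nothing to compare against beyond noting that your direct verification from \eqref{eq:X_trop_mut} is exactly the kind of routine check the citation is standing in for.
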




We are going to define the \emph{sign} of an edge path in $\bExch_\bs$.
In what follows, we only consider horizontal edge paths (\emph{i.e.}, involving no permutations) for simplicity.




\begin{dfn}[sign of a path]\label{d:sign}
Given a horizontal path $\gamma:  v_0 \overbar{k_0} v_1 \overbar{k_1} \cdots \overbar{k_{h-1}} v_h$ in $\bExch_\bs$ and a point $w \in \X_\bs(\bR^\trop)$, the \emph{sign} $\boldsymbol{\epsilon}_\gamma(w)$ of $\gamma$ at $w$ is the sequence
\begin{align*}
    \boldsymbol{\epsilon}_\gamma(w)=(\sgn(x^{(v_0)}_{k_0}(w)), \dots, \sgn(x^{(v_{h-1})}_{k_{h-1}}(w))).
\end{align*}
\end{dfn}

The sign $\bep_\gamma(w)$ tells us the domain of linearity of the PL isomorphism $\mu_\gamma: \bR^I \to \bR^I$.
For $\bep \in \{+, -\}^h$, we define
\begin{align*}
    \cC^{\bep}_\gamma := \overline{\{ w \in \cX_\bs(\bR^\trop) \mid \bep_\gamma(w) = \bep \}}.
\end{align*}
This subspace is a cone in $\cX_\bs(\bR^\trop)$.

\begin{lem}[{\cite[Lemma 3.5]{IK19}}]
If $\cC^{\bep}_\gamma \neq \emptyset$, then $\bx^{(v_0)}(\cC^{\bep}_\gamma)$ is a domain of linearity of the piecewise linear isomorphism $\mu_\gamma: \bR^I \to \bR^I$.
\end{lem}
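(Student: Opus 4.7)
The plan is to proceed by induction on the length $h$ of the horizontal path $\gamma: v_0 \overbar{k_0} v_1 \overbar{k_1} \cdots \overbar{k_{h-1}} v_h$. The key observation is that the tropical mutation formula \eqref{eq:sign x-cluster} displays $\sgn(x^{(v)}_{\indk})$ as the \emph{only} source of nonlinearity, so a single elementary tropical mutation $\mu_k: \bR^I \to \bR^I$ is an (integral) linear map on each of the closed half-spaces $H^+_k := \{x_k \geq 0\}$ and $H^-_k := \{x_k \leq 0\}$.

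For the base case $h = 1$, the cones $\bx^{(v_0)}(\cC^{(+)}_\gamma)$ and $\bx^{(v_0)}(\cC^{(-)}_\gamma)$ coincide by definition with $H^+_{k_0}$ and $H^-_{k_0}$ respectively, so the claim is immediate.

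For the inductive step, decompose $\gamma = \gamma' \cdot e$ where $\gamma': v_0 \to v_{h-1}$ is the initial segment of length $h-1$ and $e: v_{h-1} \overbar{k_{h-1}} v_h$, and split the sign as $\bep = (\bep', \epsilon_{h-1})$. By the inductive hypothesis, $\bx^{(v_0)}(\cC^{\bep'}_{\gamma'})$ is a (polyhedral) domain of linearity on which the coordinate expression of $\mu_{\gamma'}: \bR^I \to \bR^I$ coincides with a single linear map $L_{\bep'}$. The condition $\sgn(x^{(v_{h-1})}_{k_{h-1}}) = \epsilon_{h-1}$ carves out the half-space $H^{\epsilon_{h-1}}_{k_{h-1}}$ in the $v_{h-1}$-coordinates, and its preimage $L_{\bep'}^{-1}(H^{\epsilon_{h-1}}_{k_{h-1}})$ is a half-space in the $v_0$-coordinates. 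Intersecting with $\bx^{(v_0)}(\cC^{\bep'}_{\gamma'})$ yields $\bx^{(v_0)}(\cC^{\bep}_\gamma)$, which is therefore again a polyhedral cone; on it, $\mu_\gamma = \mu_{k_{h-1}} \circ \mu_{\gamma'}$ is a composition of two linear maps, hence linear.

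The only delicate point — which I would flag as the main conceptual care rather than a genuine obstacle — is that $\cC^{\bep}_\gamma$ is defined as a \emph{closure} of the open locus where the signs are strictly $\bep$. Since $\mu_\gamma$ is continuous and agrees with a fixed linear map on that open locus, it must coincide with the same linear map on its closure, so $\bx^{(v_0)}(\cC^{\bep}_\gamma)$ is indeed a domain of linearity of $\mu_\gamma$ in the usual sense.
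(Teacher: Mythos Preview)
The paper does not actually prove this lemma; it is quoted verbatim from \cite[Lemma 3.5]{IK19} without argument, so there is no in-paper proof to compare against. Your induction on the length of $\gamma$, using that a single tropical mutation is linear on each half-space $\{x_k \geq 0\}$, $\{x_k \leq 0\}$, is the standard argument and is correct.

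One small imprecision worth noting: the intersection $\bx^{(v_0)}(\cC^{\bep'}_{\gamma'}) \cap L_{\bep'}^{-1}(H^{\epsilon_{h-1}}_{k_{h-1}})$ need not literally \emph{equal} $\bx^{(v_0)}(\cC^{\bep}_\gamma)$, since the closure of an intersection of open sets can be strictly smaller than the intersection of the closures. However, it certainly \emph{contains} $\bx^{(v_0)}(\cC^{\bep}_\gamma)$, and that containment is all the inductive step requires. Your final paragraph about continuity already handles this: once $\mu_\gamma$ agrees with a linear map on the open locus $\{\bep_\gamma(w)=\bep\}$, it agrees on the closure, regardless of whether that closure fills out the full intersection.
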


We denote by $E^{\bep}_\gamma$ the presentation matrix of the linear extension of the restriction $\mu_\gamma|_{\bx^{(v_0)}(\cC^{\bep}_\gamma)}$.

\begin{ex}
Let 
\begin{align}\label{eq:C^+-}
    \cC^\pm_{(v)} := \{ w \in \cX_\bs(\bR^\trop) \mid \pm x_i^{(v)}(w) \geq 0 \text{ for all } i \in I \}
\end{align}
for each $v \in \bExch_\bs$.
Then, the sign $\bep_\gamma(w)$ at $w \in \interior \cC^+_{(v_0)}$ of any edge path $\gamma$ starting from $v_0$ coincides with the sign of \emph{$c$-vectors}, called the \emph{tropical sign}, along $\gamma$.
Therefore, the presentation matrix on $\cC^+_{(v_0)}$ coincides with the \emph{$C$-matrix} assigned at the terminal vertex of $\gamma$.
We refer the reader to \cite{IK19} for the details.
\end{ex}

\begin{dfn}[sign stability]\label{d:sign stability}
Let $\phi \in \Gamma_\bs$ be a mutation loop, and let $\gamma$ be its horizontal representation path starting from $v_0 \in \bExch_\bs$.
Let $\Omega \subset \X_\bs(\bR^\trop)$ be a subset which is invariant under the rescaling action of $\bR_{> 0}$, which we call a \emph{domain of stability}. 

Then we say that $\gamma$ is \emph{sign-stable} on $\Omega$ if there exists a sequence $\bep^\stab \in \{+,-\}^h$ of strict signs such that for each $w \in \Omega \setminus \{0\}$, there exists an integer $n_0 \in \mathbb{N}$ such that   \[\boldsymbol{\epsilon}_\gamma(\phi^n(w)) = \boldsymbol{\epsilon}^\stab \]
for all $n \geq n_0$. 
We call $\bep^\stab=\boldsymbol{\epsilon}_{\gamma,\Omega}^\stab$ the \emph{stable sign} of $\gamma$ on $\Omega$.
Also, we write $E^{(v_0)}_{\phi, \Omega} := E^{\bep^\stab}_\gamma$ for the presentation matrix and call it \emph{stable presentation matrix}.
\end{dfn}

We note that the stable presentation matrix $E^{(v_0)}_{\phi, \Omega}$ depends only on the mutation loop $\phi$ and the initial vertex $v_0$, since it is a presentation matrix of the restriction of the action $\phi: \cX_\bs(\bR^\trop) \to \cX_\bs(\bR^\trop)$ to one of its domain of linearity with respect to the coordinate system $\bx^{(v_0)}$ (\emph{cf}. \cite[Corollary 3.7]{IK19}).

We are going to mention the Perron--Frobenius property of a sign-stable mutation loop. 
We say that 
a horizontal edge path $\gamma: v_0 \overbar{k_0} v_1 \overbar{k_1} \cdots \overbar{k_{h-1}} v_h$ in $\bExch_\bs$ is \emph{fully-mutating} if
\begin{align*}
    \{k_0, k_1, \dots, k_{h-1} \} = I.
\end{align*}
An $\bR_{\geq 0}$-invariant set $\Omega \subset \cX_\bs(\bR^\trop)$ is said to be \emph{tame} if it has a non-empty intersection with the set $\bigcup_{v \in \bExch_\bs} \interior \cC^+_{(v)}$.

\begin{thm}[Perron--Frobenius property, {\cite[Theorem 3.12]{IK19}}]\label{thm:Perron-Frobenius}
Suppose $\gamma$ is a fully-mutating edge path which represents a mutation loop $\phi$, and sign-stable on a tame subset $\Omega$. 
Then the spectral radius of the stable presentation matrix $E^{(v_0)}_{\phi,\Omega}$ is attained by a positive eigenvalue $\lambda_{\phi,\Omega} \geq 1$.
\end{thm}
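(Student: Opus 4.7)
The claim is a Perron--Frobenius-type statement for the linear operator $E := E^{(v_0)}_{\phi,\Omega}$ on $\bR^I$. My plan has three components: build a non-trivial closed convex cone in $\bR^I$ preserved by $E$, apply the Krein--Rutman theorem to extract a spectral-radius eigenvalue with a cone-valued eigenvector, and separately lower-bound $\rho(E)$ via a determinant computation.

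For the invariant cone, I would combine sign stability with tameness. By the definition of $E$, the map $\phi$ acts as $E$ (in the chart $\bx^{(v_0)}$) on the stable sign cone $\cC^{\bep^\stab}_\gamma$. The tame hypothesis supplies a point $w \in \Omega \cap \interior \cC^+_{(v_\ast)}$ for some $v_\ast \in \bExch_\bs$; sign stability then yields $n_0$ such that $\phi^n(w) \in \cC^{\bep^\stab}_\gamma$ for all $n \geq n_0$. Setting $w_n := \bx^{(v_0)}(\phi^{n_0+n}(w)) \in \bR^I$, we obtain a sequence with $w_{n+1} = E w_n$, and the closed convex cone
\[
K := \overline{\bR_{\geq 0} \cdot \{w_n : n \geq 0\}} \subset \bR^I
\]
is $E$-invariant by construction. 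It is non-trivial because $\phi$ is a homeomorphism, so $w_n \neq 0$, and it sits inside $\bx^{(v_0)}(\cC^{\bep^\stab}_\gamma)$, inheriting pointedness.

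For the spectral conclusion, the Krein--Rutman theorem applied to $E$ on $K$ yields that $\rho(E|_{\mathrm{span}(K)})$ is attained by a real non-negative eigenvalue with an eigenvector in $K$. For the lower bound, observe that each single-step signed mutation matrix $E_k^\epsilon$ read off from \eqref{eq:sign x-cluster} is the identity outside its $k$-th column, with $(k,k)$-entry $-1$ and $(i,k)$-entry $[\epsilon b_{ik}]_+ \geq 0$ for $i \neq k$; cofactor expansion along row $k$ gives $\det E_k^\epsilon = -1$. Hence $|\det E| = 1$, so the product of the moduli of the eigenvalues of $E$ is $1$, forcing $\rho(E) \geq 1$. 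Combining, $\lambda_{\phi,\Omega} = \rho(E) \geq 1$, provided the cone $K$ witnesses the full spectral radius.

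The main obstacle is precisely that last clause: confirming the Krein--Rutman eigenvalue on $K$ equals $\rho(E)$ itself rather than merely $\rho(E|_{\mathrm{span}(K)})$ for some proper invariant subspace. The fully-mutating hypothesis is crucial here---by mutating every index in $I$ at least once, the operator $E$ couples all coordinates and one expects no proper coordinate-invariant subspace to absorb the orbit. Making this precise, perhaps by showing $\mathrm{span}(K) = \bR^I$ or by identifying the asymptotic growth rate of $\|w_n\|$ with $\rho(E)$, is the technical heart of the argument.
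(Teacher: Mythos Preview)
The paper does not prove this statement; it is quoted verbatim from \cite[Theorem~3.12]{IK19} with no argument given here. So there is nothing in the present paper to compare against, and I can only assess your outline on its own terms.

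Your architecture (an $E$-invariant pointed cone plus Krein--Rutman, combined with $|\det E|=1$ for the bound $\rho(E)\geq 1$) is the right one, and the determinant computation is correct. But the gap you name at the end is genuine, and neither of your proposed fixes closes it. Seeding $K$ with a \emph{single} orbit $\{w_n\}$ gives no control on $\dim\mathrm{span}(K)$: nothing prevents $w_0$ from lying in a proper $E$-invariant subspace (any eigenvector does), so ``fully-mutating couples all coordinates, hence no proper invariant subspace'' is simply false. Likewise, $\|E^n w_0\|$ grows like $\rho(E)^n$ only when $w_0$ has nonzero component in the top generalized eigenspace, which you have not arranged. You also assert that $\cC^{\bep^\stab}_\gamma$ is pointed without proof.

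The two hypotheses should be deployed differently. \emph{Tameness} gives full-dimensionality: since the sign $\bep_\gamma$ is constant on every $\interior\cC^+_{(v)}$ (sign-coherence of $c$-vectors), once your point $\phi^{n_0}(w)\in\interior\cC^+_{(\phi^{n_0}(v_\ast))}$ has sign $\bep^\stab$, the \emph{entire} full-dimensional simplicial cone $C_0:=\bx^{(v_0)}(\cC^+_{(\phi^{n_0}(v_\ast))})$ lies in $\bx^{(v_0)}(\cC^{\bep^\stab}_\gamma)$, and $E^n(C_0)$ stays there for all $n\geq 0$. \emph{Fully-mutating} gives pointedness of $\cC^{\bep^\stab}_\gamma$: if $\pm w$ both lie in it then every defining functional $x^{(v_t)}_{k_t}$ vanishes at $w$; but $x^{(v_t)}_{k_t}(w)=0$ forces $\bx^{(v_{t+1})}(w)=\bx^{(v_t)}(w)$ by the signed mutation formula, so inductively $x^{(v_0)}_{k_t}(w)=0$ for all $t$, and $\{k_t\}=I$ yields $w=0$. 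Now $K:=\overline{\mathrm{conv}\bigcup_{n\geq 0}E^n(C_0)}$ is closed, convex, solid, pointed (being contained in $\cC^{\bep^\stab}_\gamma$), and $E$-invariant, so Krein--Rutman produces $\rho(E)$ as a positive eigenvalue with eigenvector in $K$.
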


Every path $\gamma$ starting from $v_0$ has a constant sign in the interior of the cone $\cC^+_{(v_0)}$ (resp. $\cC^-_{(v_0)}$), which is given by the tropical sign (resp. that for the opposite mutation class) \cite[Lemma 3.12, Corollary 3.13]{IK19}. 
In this sense the sign stability on the set
\begin{align}\label{eq:Omega^can}
    \Omega^{\mathrm{can}}_{(v_0)}:=\interior\cC^+_{(v_0)} \cup \interior\cC^-_{(v_0)}
\end{align}
is most fundamental, and it turns out that it is sufficient for the computation of the algebraic entropy of cluster transformations (\cref{subsec:entropy}).
\begin{dfn}\label{d:cluster stretch factor}
A representation path $\gamma$ starting from $v_0 \in \bExch_\bs$ of a mutation loop $\phi \in \Gamma_\bs$ is \emph{basic sign-stable} if it is sign-stable on $\Omega^\mathrm{can}_{(v_0)}$.
In this case, we call
$\lambda_{\phi}^{(v_0)}:=\lambda_{\phi,\Omega^{\mathrm{can}}_{(v_0)}}$ the \emph{cluster stretch factor} of $\phi$.
\end{dfn}

We remark that the cluster stretch factor $\lambda_\phi^{(v_0)}$ does not depend on $v_0$ if it is fully-mutating or $\lambda_\phi^{(v_0)} >1$ \cite[Remark 3.16]{IK19}.
\section{Cluster Donaldson--Thomas transformation and Coxeter matrix}\label{sec:DT_Coxeter}

Let $Q$ be an acyclic quiver with the set of vertices $Q_0$, and $I:=\{0,\dots,N-1\}$ with $N:=|Q_0|$.
Choose a bijective labeling $\pi: I \to Q_0$ of vertices by $I$ so that $i < j$ if there is an arrow $\pi(i) \to \pi(j)$.
We call such a bijection $\pi$ an \emph{admissible labeling}, following \cite{ASS}.
An example is shown in \cref{fig:acyclic_example}. 
For $i \in I$, we abbreviate the vertex $\pi(i) \in Q_0$ as $i$.
For $i,j \in I$, let
\begin{align*}
    a_{ij}:= \begin{cases}
    n & \mbox{if there is $n$ arrows $i \to j$}, \\
    0 & \mbox{otherwise}.
    \end{cases}
\end{align*}
Then $a_{ij} \geq 0$ for all $i,j \in I$, and $a_{ij}> 0$ only if $i < j$. Using the exchange matrix $(\ve_{ij})_{i,j \in I}$ of $Q$, we can write $a_{ij}=[\ve_{ij}]_+$. 

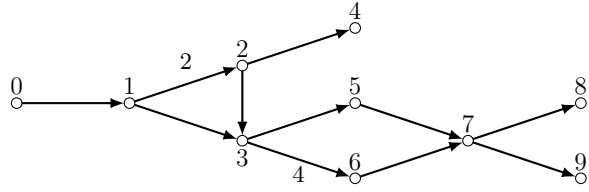
\begin{figure}[ht]
    \centering
\begin{tikzpicture}
\draw(0,0) circle(2pt) node[above,scale=0.8]{$0$};
\draw(1.5,0) circle(2pt) node[above,scale=0.8]{$1$};
\draw(3,0.5) circle(2pt) node[above,scale=0.8]{$2$};
\draw(3,-0.5) circle(2pt) node[below,scale=0.8]{$3$};
\draw(4.5,0) circle(2pt) node[above,scale=0.8]{$5$};
\draw(4.5,1) circle(2pt) node[above,scale=0.8]{$4$};
\draw(4.5,-1) circle(2pt) node[above,scale=0.8]{$6$};
\draw(6,-0.5) circle(2pt) node[above,scale=0.8]{$7$};
\draw(7.5,0) circle(2pt) node[above,scale=0.8]{$8$};
\draw(7.5,-1) circle(2pt) node[above,scale=0.8]{$9$};

\qarrowop{1.5,0}{0,0};
\qarrowop{3,0.5}{1.5,0};
\qarrowop{3,-0.5}{1.5,0};
\qarrowop{3,-0.5}{3,0.5};
\qarrowop{4.5,1}{3,0.5};
\qarrowop{4.5,0}{3,-0.5};
\qarrowop{4.5,-1}{3,-0.5};
\qarrowop{6,-0.5}{4.5,0};
\qarrowop{6,-0.5}{4.5,-1};
\qarrowop{7.5,0}{6,-0.5};
\qarrowop{7.5,-1}{6,-0.5};

\node at (2.25,0.55) {\scriptsize $2$};
\node at (3.75,-0.95) {\scriptsize $4$};
\end{tikzpicture}
    \caption{An example of acyclic quiver and its admissible labeling. The number $n$ of arrows between a pair of vertices is shown near the arrow only if $n>1$.}
    \label{fig:acyclic_example}
\end{figure}

We fix a mutation class $\bs = \bs_Q$ with the initial quiver $Q^{(v_0)} = Q$. See \cref{ld:attachment_generation}.

\begin{dfn}\label{def:incidence}
For $i, j \in I$, we use the following notation/terminology:
\begin{enumerate}
\item Write $i \precceq j$ if there exists a directed path from $i$ to $j$ in $Q$.
We regard there is a unique directed path from each vertex $i$ to itself.
Then $(I,\precceq)$ is a partially ordered set.
We write $i \prec j$ if $i \precceq j$ and $i \neq j$. 
\item Write $i \precov j$ if there exists an arrow $i \to j$ in $Q$. In particular, $i \precov j$ implies $i \prec j$. 
\item For each $\ast \in \{\precceq,\precov,\succceq,\succov\}$, define subsets $I_{\ast i}:=\{k \in I \mid k \ast i\}$.
\end{enumerate}
\end{dfn}

\subsection{Cluster Donaldson--Thomas transformation}

Consider the horizontal edge path
\begin{align}
    \gamma_\pi: v_0 \overbar{0} v_1 \overbar{1} v_2 \overbar{2}\cdots \overbar{N-1} v_{N}
\end{align}
in $\bExch_\bs$ determined by a choice of admissible labeling $\pi$.
The corresponding mutation sequence is called an \emph{admissible sequence}.
We write $Q^{[t]} := Q^{(v_t)}$ and $\bx^{[t]} := \bx^{(v_t)}$ for $t = 0, 1, \dots, N$.


\begin{rem}
We always use the notation/terminology in \cref{def:incidence} with respect to the initial quiver $Q=Q^{[0]}$. For example, we still say $0 \precov 1$ even if there is no arrow $0 \to 1$ in the mutated quiver $Q^{[1]}$. 
\end{rem}

The following statement is well-known:

\begin{lem}\label{lem:tau_is_mut_loop}
We have $Q^{[N]}=Q^{[0]}$. In particular, $\gamma_\pi$ represents a mutation loop $\tau \in \Gamma_\bs$. 
\end{lem}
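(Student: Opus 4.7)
The plan is to prove by induction on $t \in \{0, 1, \ldots, N\}$ the following invariant on $Q^{[t]}$: it is obtained from $Q^{[0]} = Q$ by reversing precisely those arrows whose endpoint pair $(i,j)$, with $i \to j$ in $Q$, satisfies $i < t \leq j$, while leaving all other arrows intact. The base case $t=0$ is vacuous. At $t = N$, no pair $(i,j)$ occurring as an arrow $i \to j$ in $Q$ satisfies $i < N \leq j$ (since all labels are $< N$), so no arrows are reversed, giving $Q^{[N]} = Q^{[0]}$ as required.

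For the inductive step from $t$ to $t+1$, I would first check that vertex $t$ is a source of $Q^{[t]}$. Admissibility of $\pi$ guarantees that every arrow of $Q$ incident to $t$ is either of the form $i \to t$ with $i < t$ or of the form $t \to j$ with $j > t$. The induction hypothesis reverses the first type in $Q^{[t]}$ (making them outgoing from $t$) and preserves the second type (still outgoing from $t$), so $\ve^{[t]}_{ti} \geq 0$ for all $i \in I$.

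The second step is to observe from the exchange-matrix mutation formula \eqref{eq:matrix mutation} that a mutation at a source $k = t$ simplifies drastically: for $i, j \neq t$, the source property forces $[\ve^{[t]}_{it}]_+ = 0$ and $[-\ve^{[t]}_{tj}]_+ = 0$, so both correction products vanish and $\ve^{[t+1]}_{ij} = \ve^{[t]}_{ij}$, while $\ve^{[t+1]}_{it} = -\ve^{[t]}_{it}$. In other words, mutation at a source merely flips all arrows incident to that vertex and creates no new arrows.

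Finally, passing from the invariant at step $t$ to that at step $t+1$ requires exactly this flip: the pairs $(i, t)$ with $i < t$ leave the reversed set (their arrows in $Q^{[t+1]}$ recover the original orientation of $Q$), while the pairs $(t, j)$ with $j > t$ enter the reversed set (their arrows become opposite to those of $Q$). This is precisely the effect of flipping all arrows at $t$, so the invariant propagates, completing the induction. The argument is a classical source-to-sink manipulation; the only point requiring verification is that mutation at a source produces no new arrows, which is immediate from \eqref{eq:matrix mutation}, so I do not anticipate any genuine obstacle.
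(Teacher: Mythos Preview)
Your proof is correct and follows essentially the same approach as the paper's: both arguments establish inductively that vertex $t$ is a source in $Q^{[t]}$, observe that mutation at a source merely reverses the incident arrows without creating new ones, and conclude that each arrow of $Q$ is flipped exactly twice along $\gamma_\pi$. Your formulation is slightly more explicit in that you track the precise invariant (reversal of exactly the arrows $i\to j$ with $i<t\leq j$), whereas the paper phrases the same content more informally; but the underlying idea is identical.
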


\begin{proof}
This is well-known but we give a proof here for later use.  We claim that the vertex $i$ is a source in the quiver $Q^{[i]}$ for $i=0,\dots,N$. The case $i=0$ is clear from our choice of the labeling of vertices. 

Assume that the claim is true up to $i-1$. In the quiver $Q^{[t_-]}$ with $t_-:=\min I_{\precov i}$, the vertex $i$ is still not affected by mutations so that there are arrows $j \to i$ for each $j \in I_{\precov i}$, and arrows $i \to k$ for each $k \in I_{\succov i}$. Then we mutate all the vertices in $I_{\precov i}$ from $Q^{[t_-]}$, which are sources at that time by assumption, and the arrows of the former type are changed into $i \to j$, while any arrows of the latter type are unchanged.
Hence $i$ is a source in $Q^{[i]}$. The claim is proved. 

Therefore, each mutation step does not generate oriented cycles, and any arrows of $Q$ are two times reversed by applying the sequence of mutations.
This means that the desired statement holds.
\end{proof}
It will turn out that the mutation loop $\tau$ does not depend on the choice of admissible labeling (\cref{cor:clusterDT} below).

We are going to compute the PL action of $\tau$ on $\X_\bs(\bR^\trop)$. 
For $w \in \X_\bs(\bR^\trop)$, let us write $x_i(w):=x_i^{[0]}(w)$ for $i\in I$.
Then we define PL functions $\xi_i(w)$ on $\cX_\bs(\bR^\trop)$, $i \in I$ by $\xi_s(w):=x_s(w)$ for minimal vertices $s$ in $(I, \precceq)$ and the recursive relation
\begin{align}\label{eq:xi_definition}
    \xi_i(w) = x_i(w) + \sum_{j \precov i} a_{ji}[\xi_{j}(w)]_-
\end{align}
along $(I,\precceq)$. 
Here, $[a]_- := \min\{ 0, a \}$ for $a \in \bR$.

\begin{lem}\label{lem:xi}
We have $x_i^{[i]}(w)=\xi_i(w)$ for all $i \in I$ and $w \in \cX_\bs(\bR^\trop)$. 
\end{lem}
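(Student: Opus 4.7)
The plan is to prove the identity by induction on $i \in I$, using the total order inherited from the admissible labeling $\pi$. Iterating the tropical $\X$-mutation formula \eqref{eq:sign x-cluster} along the first $i$ steps of $\gamma_\pi$, the coordinate $x_i^{[i]}$ unfolds as
\[
x_i^{[i]}(w) = x_i(w) + \sum_{t=0}^{i-1} \bigl[\sgn(x_t^{[t]}(w))\, \ve_{it}^{[t]}\bigr]_+ \, x_t^{[t]}(w),
\]
so the task reduces to identifying each summand with the corresponding term $a_{ti}[\xi_t]_-$ in the recursive definition \eqref{eq:xi_definition} of $\xi_i$.

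The crux is to pin down the exchange-matrix entries $\ve_{it}^{[t]}$ for $t < i$; my claim is that the preceding mutations leave them invariant, so that $\ve_{it}^{[t]} = \ve_{it}^{[0]} = -a_{ti}$ (the last equality using admissibility: no arrow $i \to t$ when $i > t$). This is where I would reuse the source property established in the proof of \cref{lem:tau_is_mut_loop}: at each step $k \in \{0,\dots,t-1\}$ the vertex $k$ is a source in $Q^{[k]}$, which forces $\ve_{ik}^{[k]} \leq 0$ and $\ve_{kt}^{[k]} \geq 0$ for all $i, t \neq k$. Consequently both correction terms $[\ve_{ik}^{[k]}]_+[\ve_{kt}^{[k]}]_+$ and $[-\ve_{ik}^{[k]}]_+[-\ve_{kt}^{[k]}]_+$ in the matrix mutation formula \eqref{eq:matrix mutation} vanish, and $\ve_{it}$ is preserved by the mutation at $k$.

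With $\ve_{it}^{[t]} = -a_{ti} \leq 0$ in hand, a short sign analysis shows $[\sgn(x_t^{[t]})(-a_{ti})]_+\, x_t^{[t]} = a_{ti}[x_t^{[t]}]_-$; terms with $t \not\precov i$ drop out since $a_{ti} = 0$, leaving
\[
x_i^{[i]} = x_i + \sum_{t \precov i} a_{ti}\, [x_t^{[t]}]_-.
\]
Applying the inductive hypothesis $x_t^{[t]} = \xi_t$ to each $t \precov i$ (all satisfying $t < i$ by admissibility) matches \eqref{eq:xi_definition} and closes the induction; the base case of $i$ minimal in $(I,\precceq)$ is just the empty-sum specialization. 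The only genuinely nontrivial step is the invariance $\ve_{it}^{[t]} = \ve_{it}^{[0]}$, but it reduces cleanly to the source-structure of the admissible mutation sequence, so I do not anticipate a serious obstruction.
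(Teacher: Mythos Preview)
Your proof is correct and follows essentially the same strategy as the paper's: both proceed by induction on $i$, invoke the source property of the mutation vertex $t$ in $Q^{[t]}$ established in the proof of \cref{lem:tau_is_mut_loop}, and then reduce via the tropical mutation formula. Your version isolates the invariance $\ve_{it}^{[t]} = \ve_{it}^{[0]}$ of the exchange-matrix entries as a separate step, whereas the paper packages the same information into a nested induction computing $x_i^{[t]}$ for $t \leq i$; the substance is identical.
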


\begin{proof}



We proceed by induction on $i=0,\dots,N$.  
For $i=0$, it is true from the definition of $\xi_0(w)$. Assume that the assertion holds for all $t< i$. 

Under that assumption, we show that 
\begin{align}\label{eq:induction_2}
    x_i^{[t]}(w)=x_i(w)+ \sum_{s< t} a_{si}[\xi_s(w)]_-
\end{align}
holds for $t\leq i$. For $t=0$, $x_i^{[0]}(w)=x_i(w)$ is clear. Assume that \eqref{eq:induction_2} holds for some $t < i$. From the quiver $Q^{[t]}$, we are going to mutate at the vertex $t$. Here we have $x_t^{[t]}(w)=\xi_t(w)$ from the first induction assumption, and $x_i^{[t]}(w)=x_i(w)+ \sum_{s< t} a_{si}[\xi_s(w)]_-$ from the induction assumption for \eqref{eq:induction_2}. Then by the mutation formula \eqref{eq:X_trop_mut}, we get
\begin{align*}
    x_i^{[t+1]}(w) = x_i^{[t]}(w)+ a_{ti}[x_t^{[t]}(w)]_- = x_i(w)+ \sum_{s< t+1} a_{si}[\xi_s(w)]_-
\end{align*}
as desired.
For the first equation, we used $\sgn(b^{[t]}_{ti}) = +$ which follows from $t$ being a source in $Q^{[t]}$ (see the proof of \cref{lem:tau_is_mut_loop}) and now $t \leq i$.
Hence \eqref{eq:induction_2} holds for all $0 \leq t \leq i$. By setting $t=i$, we get 
\begin{align*}
    x_i^{[i]}(w)=x_i(w)+ \sum_{s< i} a_{si}[\xi_s(w)]_- = x_i(w)+ \sum_{s\precov i} a_{si}[\xi_s(w)]_- = \xi_i(w).
\end{align*}
Thus the assertion is proved. 
\end{proof}

Therefore, we see that the functions $\xi_i(w)$ control the sign $\bep_\gamma$ (in the sense of \cref{d:sign}) of the path $\gamma_\pi$:
\begin{align}\label{eq:sign}
    \bep_{\gamma_\pi}(w) = (\sgn(\xi_0(w)), \sgn(\xi_1(w)), \dots, \sgn(\xi_{N-1}(w))).
\end{align}

\begin{prop}\label{lem:action_tau}
The mutation loop $\tau$ satisfies
\begin{align}\label{eq:tau_formula}
    x_i(\tau(w)) = 
    -\xi_{i}(w) + \sum_{j \succov i} a_{ij}[\xi_{j}(w)]_-
\end{align}
for all $i \in I$ and $w \in \cX_\bs(\bR^\trop)$.
\end{prop}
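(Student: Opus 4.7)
The plan is to rewrite $x_i(\tau(w))$ as $x_i^{[N]}(w)$ and then compute the latter by tracking it along $\gamma_\pi$. Since $\gamma_\pi$ is a horizontal representation path of $\tau$ from $v_0$ to $v_N = \tau^{-1}(v_0)$ (with $Q^{[N]} = Q^{[0]}$ by \cref{lem:tau_is_mut_loop}), the defining property $\bx^{(v_0)}(\tau(w)) = \bx^{(\tau^{-1}(v_0))}(w)$ recorded in \cref{subsec:cluster_modular} reduces the statement to the identity $x_i(\tau(w)) = x_i^{[N]}(w)$. I then aim to prove, by induction on $t \in \{i+1, \dots, N\}$, the intermediate formula
\[
    x_i^{[t]}(w) = -\xi_i(w) + \sum_{\substack{j \succov i \\ j < t}} a_{ij} [\xi_j(w)]_-,
\]
which at $t = N$ collapses to the desired \eqref{eq:tau_formula}, since every $j \succov i$ automatically satisfies $j < N$.

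The base case $t = i+1$ is immediate: the step-$i$ mutation at $k = i$, together with $x_i^{[i]}(w) = \xi_i(w)$ from \cref{lem:xi}, gives $x_i^{[i+1]}(w) = -\xi_i(w)$, while the sum is empty. For the inductive step $t \to t+1$, I apply \cref{l:x-cluster signed} to the mutation at $k = t$ and substitute $x_t^{[t]}(w) = \xi_t(w)$ from \cref{lem:xi} to obtain
\[
    x_i^{[t+1]}(w) = x_i^{[t]}(w) + [\sgn(\xi_t(w))\, b_{it}^{[t]}]_+\, \xi_t(w).
\]
The core identity required is $b_{it}^{[t]} = -a_{it}$ for all $i < t$; granting this, a quick sign case-check on $\xi_t(w)$ simplifies the correction term to $a_{it}[\xi_t(w)]_-$, which is precisely the $j=t$ summand to be absorbed into the sum (or $0$ when $a_{it}=0$).

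I expect the main, though clean, point of the argument to be the structural identity $b_{it}^{[t]} = -a_{it}$. For this, I would exploit the key feature of $\gamma_\pi$ already used in \cref{lem:tau_is_mut_loop}: every mutation in $\gamma_\pi$ is performed at a source of the current quiver. When $k$ is a source of $Q^{[s]}$, each entry $b_{mk}^{[s]}$ is non-positive, so both bracket products $[b_{ik}^{[s]}]_+[b_{kj}^{[s]}]_+$ and $[-b_{ik}^{[s]}]_+[-b_{kj}^{[s]}]_+$ in \eqref{eq:matrix mutation} vanish; hence $b_{ij}^{[s+1]} = b_{ij}^{[s]}$ for all $i,j \neq k$. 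In other words, a source mutation merely reverses the arrows incident to the mutated vertex and leaves every other arrow multiplicity unchanged. Applied to the first $t$ steps of $\gamma_\pi$, the arrow between $i < t$ and $t$ is flipped exactly once -- at step $i$ -- so the original $a_{it}$ arrows $i \to t$ of $Q$ become $a_{it}$ arrows $t \to i$ in $Q^{[t]}$, yielding $b_{it}^{[t]} = -a_{it}$ and completing the induction.
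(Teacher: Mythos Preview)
Your proposal is correct and follows essentially the same route as the paper: both prove the intermediate formula $x_i^{[t]}(w) = -\xi_i(w) + \sum_{i<s<t} a_{is}[\xi_s(w)]_-$ by induction on $t$ from the base case $x_i^{[i+1]}(w)=-\xi_i(w)$, invoking \cref{lem:xi} at each step. Your explicit verification of $b_{it}^{[t]}=-a_{it}$ via the source-mutation property is exactly the content the paper leaves implicit when it writes ``by the mutation formula, we get $x_i^{[t+1]}(w)=x_i^{[t]}(w)+a_{it}[x_t^{[t]}(w)]_-$''.
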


\begin{proof}
Fixing $i \in I$, we claim that 
\begin{align}\label{eq:induction_3}
    x_i^{[t]}(w)= -\xi_i(w) + \sum_{i< s < t} a_{is}[\xi_s(w)]_-
\end{align}
holds for $i+1 \leq t\leq N$.
For $t=i+1$, we have $x_i^{[i+1]}(w)=-\xi_i(w)$ by \cref{lem:xi}. Assume that \eqref{eq:induction_3} holds for some $i+1 \leq t < N$.
In the quiver $Q^{[t]}$, we have $x_t^{[t]}(w)=\xi_t(w)$ from \cref{lem:xi}.
Then by the mutation formula, we get
\begin{align*}
    x_i^{[t+1]}(w)= x_i^{[t]}(w) + a_{it}[x_t^{[t]}(w)]_- = -\xi_i(w) + \sum_{i< s < t+1} a_{is}[\xi_s(w)]_-
\end{align*}
as desired. Hence \eqref{eq:induction_3} holds for all $i+1\leq t \leq N$. By setting $t=N$, we get the asserted equation \eqref{eq:tau_formula}.
\end{proof}

\begin{cor}\label{cor:clusterDT}
The mutation loop $\tau$ is the cluster DT transformation.
Namely, it satisfies $x_i(\tau(w))=-x_i(w)$ for all $w \in \cC_{(v_0)}^+$.
\end{cor}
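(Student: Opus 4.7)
The plan is to reduce the corollary to a direct computation using the two formulas just established: the recursive definition of $\xi_i(w)$ in \eqref{eq:xi_definition} and the formula for $x_i(\tau(w))$ from \cref{lem:action_tau}. The key observation is that on the positive cone $\cC^+_{(v_0)}$, every $x_i(w)$ is non-negative, and this non-negativity propagates through the recursion defining $\xi_i$, making the $[\cdot]_-$ terms vanish.

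More precisely, I would argue by induction along the partial order $(I,\precceq)$ that $\xi_i(w) = x_i(w) \geq 0$ for every $w \in \cC^+_{(v_0)}$. For a minimal vertex $i$, by definition $\xi_i(w) = x_i(w) \geq 0$. For the induction step, assume $\xi_j(w) = x_j(w) \geq 0$ for all $j \prec i$. Since every $j \precov i$ satisfies $j \prec i$, we get $[\xi_j(w)]_- = 0$ for each such $j$, so \eqref{eq:xi_definition} gives
\[
\xi_i(w) = x_i(w) + \sum_{j \precov i} a_{ji}[\xi_j(w)]_- = x_i(w) \geq 0,
\]
completing the induction.

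With this in hand, all summands $[\xi_j(w)]_-$ appearing in \eqref{eq:tau_formula} vanish, and the formula collapses to
\[
x_i(\tau(w)) = -\xi_i(w) + \sum_{j \succov i} a_{ij}[\xi_j(w)]_- = -\xi_i(w) = -x_i(w),
\]
which is exactly the claimed identity. This is the defining property of the cluster DT transformation (acting as $-\mathrm{id}$ on the positive tropical cone), so $\tau$ is identified with it; in particular, $\tau$ depends only on $Q$ and not on the chosen admissible labeling $\pi$.

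I do not expect any serious obstacle here: the work was already done in \cref{lem:xi} and \cref{lem:action_tau}, and the corollary amounts to verifying that the positivity hypothesis kills all the $[\cdot]_-$ corrections. The only mildly subtle point is choosing the right induction order (the partial order $\precceq$ from $Q$ rather than the linear order from $\pi$), but both work because $\pi$ is admissible.
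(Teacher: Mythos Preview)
Your proof is correct and follows essentially the same approach as the paper: both argue inductively from \eqref{eq:xi_definition} that $\xi_i(w)=x_i(w)\geq 0$ on $\cC^+_{(v_0)}$, then invoke \cref{lem:action_tau} to collapse \eqref{eq:tau_formula} to $x_i(\tau(w))=-x_i(w)$. You spell out the induction more explicitly than the paper, but the content is identical.
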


\begin{proof}
We $x_i(w) \geq 0$ for all $i \in I$ for all $w \in \cC^+_{(v_0)}$. From the defining relation \eqref{eq:xi_definition}, we inductively get $\xi_i(w)=x_i(w) \geq 0$ for all $i \in I$. Then \cref{lem:action_tau} tells us that $x_i(\tau(w))=-x_i(w)$ for all $i \in I$. 
\end{proof}
By the uniqueness of the cluster DT transformation (\cite[Theorem 3.2]{GS18}), it follows that the mutation loop $\tau$ does not depend on the choice of an admissible labeling $\pi$. In other words, an admissible labeling $\pi$ determines a representation path $\gamma_\pi$ of $\tau$.

\subsection{Coxeter matrix}
We are going to see that the Coxeter matrix in the sense of \cite{ASS} appears as a presentation matrix of $\tau$.

\begin{dfn}
Let $Q$ be an acyclic quiver.
\begin{enumerate}
    \item Define the matrix $M=(m_{ij})_{i,j \in I}$ by 
    \begin{align*}
        m_{ij}:= \#\{ \text{directed paths in $Q$ from $i$ to $j$} \}
    \end{align*}
    for $i,j \in I$.\footnote{It is the transposition of the ``$M$-matrix" in \cite{Pena}.} Note that $m_{ii}=1$ for all $i \in I$.
    Observe that $m_{ij} >0$ only if $i \precceq j$. 
    \item The matrix $\Phi=(\phi_{ij})_{i,j \in I}:=-M^{-1} \cdot M^{\tr}$ is called the \emph{Coxeter matrix}, following \cite{ASS}.
\end{enumerate} 
\end{dfn}
See \cref{ex:D4_sign} for examples of $\Phi$. 

\begin{lem}\label{lem:A*M}
We have $\sum_{k \in I}a_{ik}m_{kj}=(1-\delta_{ij})m_{ij} =\sum_{k \in I}m_{ik}a_{kj}$ for $j,k \in I$.
\end{lem}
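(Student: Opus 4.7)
The plan is to prove both equalities by a straightforward combinatorial bijection, interpreting the matrix products as counting directed paths decomposed at the first (respectively last) arrow.

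First I would reinterpret the left-hand side combinatorially: since $a_{ik}$ counts the arrows $i\to k$ and $m_{kj}$ counts the directed paths from $k$ to $j$ (including the trivial path when $k=j$), the sum $\sum_{k\in I} a_{ik} m_{kj}$ counts ordered pairs consisting of an arrow $i\to k$ followed by a directed path $k\to j$ in $Q$. Concatenating these pairs gives precisely the directed paths from $i$ to $j$ of \emph{positive} length, and this correspondence is a bijection because in an acyclic quiver any positive-length path from $i$ to $j$ is uniquely determined by its first arrow and the remaining subpath.

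It then remains to compare this count with $(1-\delta_{ij})\, m_{ij}$. For $i\neq j$, every directed path from $i$ to $j$ has positive length, so the number of positive-length paths equals $m_{ij}$ itself, matching the right-hand side. For $i=j$, acyclicity of $Q$ forces the only path from $i$ to itself to be the trivial one, so there are no positive-length such paths; meanwhile $(1-\delta_{ii})\, m_{ii}=0$, and both sides agree.

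The second equality $\sum_{k\in I} m_{ik}\, a_{kj}=(1-\delta_{ij})\, m_{ij}$ is proved by exactly the same argument, decomposing each positive-length directed path from $i$ to $j$ uniquely as a subpath $i\to k$ followed by its terminal arrow $k\to j$. There is essentially no obstacle here: the whole statement is a bookkeeping identity reflecting the two natural decompositions of a path by its first or last edge, and the only point requiring care is the convention $m_{ii}=1$, which is precisely what forces the $(1-\delta_{ij})$ correction factor.
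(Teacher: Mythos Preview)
Your proof is correct and follows essentially the same approach as the paper: both interpret $\sum_{k}a_{ik}m_{kj}$ as counting directed paths from $i$ to $j$ decomposed at their first arrow, then observe that for $i\neq j$ every path has positive length while for $i=j$ acyclicity forces this count to vanish. The paper phrases this as ``paths passing through some $k\succov i$'' and treats the second equality as similar, just as you do.
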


\begin{proof}
Observe that the left-hand side counts all the directed paths from $i$ to $j$ that pass through some vertex $k \succov i$. If $i \neq j$, then every directed path from $i$ to $j$ must pass through $I_{\succov i}$, so that $\sum_{k \in I}a_{ik}m_{kj}=m_{ij}$ holds. If $i=j$, then such a directed path does not exist by the acyclicity. Hence we have the first equality. The proof for the second equality is similar.
\end{proof}

\begin{lem}\label{lem:phi}
We have $\phi_{ij} = -m_{ji}+\sum_{k \in I}a_{ik}m_{jk}$ for $i,j \in I$. 
\end{lem}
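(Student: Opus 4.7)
\medskip

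\textbf{Proof plan.}
The natural strategy is to bypass the explicit inversion of $M$ and instead verify the asserted formula after multiplying by $M$ on the left. Concretely, define the matrix $\widetilde{\Phi}=(\widetilde{\phi}_{ij})_{i,j \in I}$ by
\begin{align*}
    \widetilde{\phi}_{ij}:=-m_{ji}+\sum_{k \in I} a_{ik}\,m_{jk}.
\end{align*}
Since $\Phi=-M^{-1}M^{\tr}$ and the admissible labeling forces $m_{ij}=0$ whenever $i \not\precceq j$, i.e.\ whenever $i>j$, the matrix $M$ is upper triangular with $m_{ii}=1$ on the diagonal. Hence $M$ is invertible, and proving $\widetilde{\Phi}=\Phi$ is equivalent to proving the identity
\begin{align*}
    M\cdot \widetilde{\Phi}=-M^{\tr}.
\end{align*}

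The plan is therefore to compute $(M\widetilde{\Phi})_{ij}=\sum_{k\in I}m_{ik}\widetilde{\phi}_{kj}$ and use \cref{lem:A*M} to simplify. Substituting the definition of $\widetilde{\phi}_{kj}$ gives
\begin{align*}
    \sum_{k}m_{ik}\widetilde{\phi}_{kj}
    =-\sum_{k}m_{ik}m_{jk}+\sum_{k,l}m_{ik}a_{kl}m_{jl}
    =-(MM^{\tr})_{ij}+\sum_{l}(MA)_{il}\,m_{jl},
\end{align*}
where $A=(a_{kl})_{k,l\in I}$. By \cref{lem:A*M} one has $(MA)_{il}=(1-\delta_{il})m_{il}$, so the second sum collapses to $\sum_{l}m_{il}m_{jl}-m_{ii}m_{ji}=(MM^{\tr})_{ij}-m_{ji}$ (using $m_{ii}=1$). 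The two copies of $(MM^{\tr})_{ij}$ cancel and we are left with $-m_{ji}$, which is exactly the $(i,j)$ entry of $-M^{\tr}$. This yields $M\widetilde{\Phi}=-M^{\tr}$, and invertibility of $M$ then gives $\widetilde{\Phi}=\Phi$, as required.

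I do not expect any real obstacle here: the only ingredient beyond bookkeeping is the combinatorial identity $\sum_k m_{ik}a_{kj}=(1-\delta_{ij})m_{ij}$ already recorded in \cref{lem:A*M}, and the cancellation of the $MM^{\tr}$ terms happens formally. The mild subtlety to keep in mind is that one must invoke the admissible labeling to secure invertibility of $M$ (through its upper-triangularity with unit diagonal); without this, passing from $M\widetilde{\Phi}=-M^{\tr}$ to $\widetilde{\Phi}=\Phi$ would not be automatic.
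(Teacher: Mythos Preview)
Your proof is correct and rests on the same ingredient as the paper's argument, namely \cref{lem:A*M}. The only structural difference is that the paper first identifies $M^{-1}=\mathrm{Id}_N-A$ explicitly (again via \cref{lem:A*M}) and then multiplies out $\Phi=-M^{-1}M^{\tr}$ entrywise, whereas you avoid writing down $M^{-1}$ and instead check the equivalent identity $M\widetilde{\Phi}=-M^{\tr}$; the computations are essentially the same content rearranged.
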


\begin{proof}
First note that the matrix $\mathrm{Id}_N - A$
is inverse to $M$. 
Indeed, 
\begin{align*}
    \sum_{k \in I} (\delta_{ik} - a_{ik})m_{kj}
    = m_{ij} - \sum_{k \neq i} a_{ik}m_{kj}
    = m_{ij} - (1-\delta_{ij}) m_{ij}
    = \delta_{ij}m_{ii}
    = \delta_{ij}.
\end{align*}
Then we get
\begin{align*}
    \phi_{ij} = -\sum_{k \in I} (-\delta_{ik} + a_{ik}) m_{jk} = -
    m_{ji} + \sum_{i \neq k} a_{ik} m_{jk},
\end{align*}
as asserted. 
\end{proof}
We have the following important lemma:

\begin{lem}[Proof in \cref{sec:proofs}]\label{lem:M^tr*Phi}
If the quiver $Q$ is representation infinite, then each entry of the matrix $M^\tr \Phi^n$ is non-negative for any $n \geq 0$.
\end{lem}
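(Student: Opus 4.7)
The plan is to interpret $M^{\tr}\Phi^n$ representation-theoretically: viewing dimension vectors of left $kQ$-modules as row vectors, the $i$-th row of $M^{\tr}\Phi^n$ will be identified with $\underline{\dim}\,\tau^n I(i)$, where $\tau$ is the Auslander--Reiten translation and $I(i)$ is the indecomposable injective $kQ$-module at vertex $i$. Since preinjective modules over a representation infinite hereditary algebra are non-zero, the desired non-negativity follows.

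First I would verify two identifications. The $i$-th row of $M^{\tr}$ is $\underline{\dim}\,I(i)$, since $(M^{\tr})_{ij} = m_{ji}$ coincides with the classical formula $\dim_j I(i) = m_{ji}$ for the indecomposable injective. The matrix $\Phi$ implements $\tau$ on row dimension vectors: for any non-projective indecomposable $N$,
\[
\underline{\dim}\,N \cdot \Phi \;=\; \underline{\dim}\,\tau N.
\]
This is the row-vector transpose of the classical identity $c\cdot\underline{\dim}\,N = \underline{\dim}\,\tau N$ for the column-vector Coxeter transformation $c := \Phi^{\tr} = -MM^{-\tr}$ (see, e.g., \cite{ASS}), whose defining relation $c\cdot\underline{\dim}\,P(i) = -\underline{\dim}\,I(i)$ is, on the level of matrices, the readily checked identity $M\Phi = -M^{\tr}$. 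Iterating, the $i$-th row of $M^{\tr}\Phi^n$ equals $\underline{\dim}\,\tau^n I(i)$ as long as each intermediate $\tau^k I(i)$ (for $0 \le k < n$) is non-projective, so that the next application of $\tau$ is defined and produces a non-zero module.

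The crucial input--and the main obstacle--is the structural fact about the AR quiver of a representation infinite acyclic quiver: the preinjective and preprojective components are disjoint, so every $\tau^n I(i)$ (for $n \ge 0$, $i \in I$) is a non-zero non-projective indecomposable $kQ$-module; in particular, no $I(i)$ is itself projective and the iteration never terminates. This is a classical result (see, e.g., \cite{ASS}), and it is precisely where the representation infinite hypothesis enters: for representation finite $Q$ some $\tau^n I(i)$ eventually lands on a projective and the subsequent translate is zero, after which non-negativity can fail. Granting this, every row of $M^{\tr}\Phi^n$ is the dimension vector of a genuine $kQ$-module and therefore has non-negative entries, completing the proof.
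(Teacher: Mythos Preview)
Your proposal is correct and follows essentially the same approach as the paper: both identify the $i$-th row of $M^{\tr}\Phi^n$ with the dimension vector of $\tau^n I_i$ via the standard formula $(\bdim X)^{\tr}\Phi = (\bdim \tau X)^{\tr}$ for non-projective $X$, and then invoke the fact that for a representation infinite quiver all iterated translates $\tau^n I_i$ are non-trivial (preinjective) modules, hence have non-negative dimension vectors. The paper's write-up is terser---it simply cites the relevant facts from \cite{ASS} and writes down the matrix identity---but the argument is the same.
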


For $n \geq 0$, we denote the $(j,k)$-entry of $\Phi^n$ by $\phi_{jk}^{(n)}$. The above lemma says that $\sum_{j \in I} m_{ji}\phi_{jk}^{(n)} \geq 0$.

\begin{lem}\label{lem:xi_M}
If the quiver $Q$ is representation infinite, then we have the following for $n \geq 0$, $i \in I$ and $w \in \cC^-_{(v_0)}$:
\begin{enumerate}
\renewcommand{\labelenumi}{(\arabic{enumi})$_n$}
    \item $\xi_i(\tau^n(w)) \leq 0$.
    \item $\xi_i(\tau^n(w)) = \sum_{\ell \in I} m_{\ell i} x_\ell(\tau^n(w))$.
    \item $x_i(\tau^{n+1}(w)) = \sum_{k \in I} \phi_{ik} x_k(\tau^n(w))$.
\end{enumerate}
\end{lem}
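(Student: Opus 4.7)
The plan is to prove all three statements simultaneously by induction on $n$, and within each stage by an auxiliary induction on the poset $(I,\precceq)$.

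For the base case $n=0$, the assumption $w\in\cC^-_{(v_0)}$ gives $x_i(w)\leq 0$ for all $i$, so (1)$_0$ is immediate by induction along $(I,\precceq)$ from the recursive formula \eqref{eq:xi_definition}. Then $[\xi_j(w)]_-=\xi_j(w)$ turns that recursion into a linear relation, and (2)$_0$ follows by induction on the poset together with \cref{lem:A*M} (the identity $\sum_j m_{\ell j}a_{ji}=(1-\delta_{\ell i})m_{\ell i}$ absorbs the correction at the top). Finally (3)$_0$ is obtained by plugging (1)$_0$ and (2)$_0$ into the formula of \cref{lem:action_tau} and recognizing the result as $\sum_k\phi_{ik}x_k(w)$ via \cref{lem:phi}.

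For the inductive step, assuming (1)$_m$, (2)$_m$, (3)$_m$ for all $m\leq n$, I first iterate (3)$_0,\dots,(3)_n$ to get the closed form
\begin{align*}
    x_\ell(\tau^{n+1}(w))=\sum_{k\in I}\phi^{(n+1)}_{\ell k}\,x_k(w),
\end{align*}
which yields
\begin{align*}
    \sum_{\ell\in I}m_{\ell i}\,x_\ell(\tau^{n+1}(w))=\sum_{k\in I}(M^\tr\Phi^{n+1})_{ik}\,x_k(w)\leq 0
\end{align*}
by \cref{lem:M^tr*Phi} and $x_k(w)\leq 0$. Call this estimate $(\ast)_{n+1}$.

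I then establish (1)$_{n+1}$ and (2)$_{n+1}$ in tandem by induction along $(I,\precceq)$ applied at the point $u:=\tau^{n+1}(w)$. For minimal $i$ one has $m_{\ell i}=\delta_{\ell i}$, so $\xi_i(u)=x_i(u)=\sum_\ell m_{\ell i}x_\ell(u)\leq 0$ by $(\ast)_{n+1}$. For non-minimal $i$, the poset-induction assumption $\xi_j(u)\leq 0$ for $j\prec i$ collapses $[\xi_j(u)]_-$ to $\xi_j(u)$ in \eqref{eq:xi_definition}, and the same algebraic manipulation as in the base case (using \cref{lem:A*M}) gives $\xi_i(u)=\sum_{\ell}m_{\ell i}x_\ell(u)$, which is $\leq 0$ again by $(\ast)_{n+1}$. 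Once (1)$_{n+1}$ and (2)$_{n+1}$ are in hand, (3)$_{n+1}$ is a direct application of \cref{lem:action_tau} to $u$ combined with \cref{lem:phi}, exactly as in the base case.

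The main obstacle is the coupling between (1) and (2): the identity (2)$_{n+1}$ requires $\xi_j(u)\leq 0$ to drop the $[\,\cdot\,]_-$, while the only way to verify $\xi_i(u)\leq 0$ is to first express it as $\sum_\ell m_{\ell i}x_\ell(u)$ and invoke the non-negativity of $M^\tr\Phi^{n+1}$ from \cref{lem:M^tr*Phi}. Resolving this apparent circularity is exactly the reason for running the poset induction with both statements simultaneously, after isolating the auxiliary bound $(\ast)_{n+1}$ from the previous iterates of (3). The representation-infinite hypothesis enters only through \cref{lem:M^tr*Phi}; everything else is formal.
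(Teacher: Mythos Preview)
Your proposal is correct and follows essentially the same approach as the paper: induction on $n$, with (1) and (2) handled together by an inner induction along $(I,\precceq)$, and (3) derived from \cref{lem:action_tau} and \cref{lem:phi}. Your isolation of the auxiliary bound $(\ast)_{n+1}$ and explicit discussion of the (1)--(2) coupling are a slight organizational improvement, but the logical content matches the paper's proof.
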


\begin{proof}
We prove it by induction on $n$.
First, we verify (1)$_0$ and (2)$_0$.
The statement (1)$_0$ follows from
\begin{align*}
    \xi_i(w) = x_i(w) + \sum_{k \precov i} a_{ki} [\xi_k(w)]_-
    \leq x_i(w) \leq 0,
\end{align*}
since $[a]_- \leq 0$ for $a \in \bR$.
For (2)$_0$, we proceed by induction along the poset $(I, \precceq)$.
It is clear for a minimal vertex $s$, since $m_{\ell s}=\delta_{\ell s}$ in this case.
For a general vertex $i$,
\begin{align*}
\xi_i(w) &= x_i(w) + \sum_{k \precov i} a_{ki} [\xi_k(w)]_-\\
&= x_i(w) + \sum_{k \precov i} a_{ki} \sum_{\ell \in I} m_{\ell k} x_\ell(w) & (\text{\text{(1)$_0$} \& induction assumption})&\\
&= m_{ii} x_i(w) + \sum_{\ell \in I} \big( (1 - \delta_{\ell i}) m_{\ell i} \big) x_\ell(w) & (\text{\cref{lem:A*M}})&\\
&= \sum_{\ell \in I} m_{\ell i} x_\ell(w).
\end{align*}
Thus (2)$_0$ is proved. 

Next, we prove $(1)_n \,\&\, (2)_n \Rightarrow (3)_n$ for all $n \geq 0$:
\begin{align*}
    x_i(\tau^{n+1}(w)) &= -\xi_{i}(\tau^n(w)) + \sum_{j \succov i} a_{ij}[\xi_{j}(\tau^n(w))]_- & (\text{\cref{lem:action_tau}})& \\
    &=  -\sum_{\ell \in I} m_{\ell i} x_\ell(\tau^n(w)) + \sum_{j \succov i} a_{ij} \sum_{\ell \in I} m_{\ell j} x_\ell(\tau^n(w)) & (\text{(1)$_n$ \& (2)$_n$})&\\
    &= \sum_{\ell \in I} \phi_{i \ell} x_\ell(\tau^n(w)). & (\text{\cref{lem:phi}})&
\end{align*}
In particular, (3)$_0$ is proved.

Finally, we prove $(3)_0$--$(3)_n \Rightarrow (1)_{n+1} \,\&\, (2)_{n+1}$ for all $n \geq 0$.
We proceed by induction along $(I,\precceq)$.
For a minimal vertex $s$ in $(I, \precceq)$, (2)$_{n+1}$ clearly holds and we get
\begin{align*}
    \xi_s(\tau^{n+1}(w)) = x_s(\tau^{n+1}(w)) = \sum_{k} \phi^{(n+1)}_{sk} x_k(w)
\end{align*}
by $(3)_0$--$(3)_n$.
Also, $(M^\tr \Phi^n)_{sk} = \phi^{(n)}_{sk}$ since $s$ is minimal.
Therefore, $(1)_{n+1}$ for $s$ follows from \cref{lem:M^tr*Phi}.
For a general vertex $i \in I$, 
\begin{align*}
    \xi_i(\tau^{n+1}(w)) &= x_i(\tau^{n+1}(w)) + \sum_{k \precov i} a_{ki} [\xi_k(\tau^{n+1}(w))]_-\\
    &= m_{ii} x_i(\tau^{n+1}(w)) + \sum_{k} a_{ki} \sum_\ell m_{\ell k} x_\ell(\tau^{n+1}(w)) & (\text{induction assumption})&\\
    &= \sum_{\ell} m_{\ell i} x_\ell(\tau^{n+1}(w)). & (\text{\cref{lem:A*M}})&
\end{align*}
Here, the second equation follows from the induction assumption on $i$ for $(2)_{n+1}$ and $(1)_{n+1}$ since $k \prec i$.
Thus we obtain $(2)_{n+1}$.
By this formula and $(3)_0$--$(3)_n$, we also get
\begin{align*}
    \xi_i(\tau^{n+1}(w)) = \sum_{\ell} m_{\ell i} x_\ell(\tau^{n+1}(w)) = \sum_{\ell, k} m_{\ell i} \phi^{(n+1)}_{\ell k} x_k(w).
\end{align*}
Since $w \in \cC^-_{(v_0)}$, we obtain $(1)_{n+1}$ by \cref{lem:M^tr*Phi}.
\end{proof}

Recall that the mutation loop $\tau$ first maps $\cC^+_{(v_0)}$ to $\cC^-_{(v_0)}$ by \cref{cor:clusterDT}. 
Then by combining \eqref{eq:sign} and \cref{lem:xi_M}, we get the following.

\begin{thm}\label{cor:SS_infinite}
If $Q$ is representation infinite, then $\gamma_\pi$ is sign-stable on $\cC^+_{(v_0)} \cup \cC^-_{(v_0)}$ with the stable sign $\boldsymbol{\epsilon}_{\gamma_\pi}^\stab = \underbrace{(-, \cdots, -)}_N$ and the stable presentation matrix $\Phi$ for any admissible labeling $\pi$ of $Q$.
\end{thm}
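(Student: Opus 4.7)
The plan is to combine \cref{cor:clusterDT}, which reduces the problem to the cone $\cC^-_{(v_0)}$, with the explicit formulas in \cref{lem:xi_M} and the positivity statement of \cref{lem:M^tr*Phi}.

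\textbf{Reduction.} By \cref{cor:clusterDT}, $\tau$ maps $\cC^+_{(v_0)} \setminus \{0\}$ bijectively onto $\cC^-_{(v_0)} \setminus \{0\}$ via $w \mapsto -w$. The sign sequence $\bep_{\gamma_\pi}(\tau^n(w))$ for $w \in \cC^+_{(v_0)}$ and $n \geq 1$ then coincides with $\bep_{\gamma_\pi}(\tau^{n-1}(\tau(w)))$ for $\tau(w) \in \cC^-_{(v_0)}$. Hence it suffices to prove sign stability with stable sign $(-, \dots, -)$ for an arbitrary $w \in \cC^-_{(v_0)} \setminus \{0\}$.

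\textbf{Sign computation.} For such $w$, combining parts (2) and (3) of \cref{lem:xi_M} yields the closed form
\begin{align*}
\xi_i(\tau^n(w)) \;=\; \bigl(M^\tr \Phi^n\, x(w)\bigr)_i \quad (i \in I,\ n \geq 0),
\end{align*}
where $x(w) := (x_\ell(w))_{\ell \in I}$. Setting $v := -x(w)$, this is a componentwise non-negative, non-zero vector. Since $M^\tr \Phi^n$ has non-negative entries by \cref{lem:M^tr*Phi}, we obtain $\xi_i(\tau^n(w)) = -(M^\tr \Phi^n v)_i \leq 0$ for every $n \geq 0$ and $i \in I$. By the sign formula \eqref{eq:sign}, the sign sequence $\bep_{\gamma_\pi}(\tau^n(w))$ therefore contains no $+$ component. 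To conclude sign stability with stable sign $(-, \dots, -)$, I must upgrade these non-strict inequalities to strict ones for all sufficiently large $n$.

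\textbf{Main obstacle.} The delicate step I anticipate is establishing strict positivity, namely that for any non-zero $v \geq 0$ there exists $n_0$ with $(M^\tr \Phi^n v)_i > 0$ for all $i$ and $n \geq n_0$. This is where the representation infinite hypothesis is essential. I plan to exploit a Perron--Frobenius analysis of the Coxeter matrix: the spectral radius $\rho(\Phi) \geq 1$ is attained by an eigenvector with strictly positive entries (using the connectedness of $Q$), so the asymptotic direction of $\Phi^n v$ is positive in every coordinate. This positivity is preserved under multiplication by $M^\tr$, which is invertible with non-negative entries (indeed $M$ is the inverse of the upper unitriangular matrix $\mathrm{Id} - A$, as noted in the proof of \cref{lem:phi}). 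For a representation finite $Q$, by contrast, $\Phi$ is periodic and this strict positivity cannot hold, in line with \cref{thm:trich}(1).

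\textbf{Stable presentation matrix.} Finally, the linearization underlying \cref{lem:xi_M}(3) shows that on the cone $\{w : \xi_i(w) \leq 0 \text{ for all } i\} = \cC^{(-,\dots,-)}_{\gamma_\pi}$ --- which contains the tail of the orbit $\{\tau^n(w)\}_n$ by the above --- the coordinate action of $\tau$ in the chart $\bx^{(v_0)}$ is the linear map $x \mapsto \Phi x$. Hence $\Phi$ is the stable presentation matrix, as claimed.
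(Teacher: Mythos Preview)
Your reduction via \cref{cor:clusterDT} and the use of \cref{lem:xi_M} to obtain $\xi_i(\tau^n(w))\le 0$, combined with \eqref{eq:sign}, is precisely the paper's argument --- which consists of a single sentence citing exactly these three inputs and nothing more.

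Where you go beyond the paper is in your ``main obstacle'' paragraph: you insist on upgrading $\xi_i\le 0$ to strict negativity. The paper does not address this at all. For the \emph{interiors} $\interior\cC^\pm_{(v_0)}$ --- which is all that basic sign stability and hence \cref{thm:trich} require --- the strictness is immediate and no Perron--Frobenius analysis is needed: by the proof of \cref{lem:M^tr*Phi} in \cref{sec:proofs}, the $i$-th row of $M^\tr\Phi^n$ is the dimension vector $\bdim\tau^n I_i$ of a non-zero preinjective module (this is exactly where ``representation infinite'' enters), hence has at least one positive entry, so $(M^\tr\Phi^n v)_i>0$ for every strictly positive $v$. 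For boundary points of the closed cones the issue you raise is genuine, but your Perron--Frobenius sketch is incomplete as a fix: it tacitly assumes $Q$ is connected (not stated as a hypothesis), and in the tame case $\rho(\Phi)=1$ with other eigenvalues on the unit circle, so the ``asymptotic direction of $\Phi^n v$'' claim needs a Jordan-block argument rather than a dominant-eigenvalue one. The paper simply does not engage with this.
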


\if0

\begin{dfn}
\begin{enumerate}
    \item For $J \subset I$, consider the cone
    \begin{align*}
        \cC^-_J:=\{ w \in \X_\bs(\bR^\trop)\mid x_j(w) \leq 0 \mbox{ for $j \in J$};\ x_j(w) \geq 0 \mbox{ for $j \notin J$}\}.
    \end{align*}
    In particular, $\cC_I^-=\cC_{(v_0)}^-$.
    \item For $n \geq 0$, consider the subset
    \begin{align}
        J_n := \{ j \in I \mid x_j(\tau^n(w)) \leq 0 \text{ for } w \in \cC^-_{(v_0)} \}.
    \end{align}
    From the definition, $J_0 = I$ and $\tau^n(w) \in \cC^-_{J_n}$ if $w \in \cC^-_I$.
    \item A subset $J \subset I$ is called a \emph{semi-ideal} if $j \in J$, $i \in I$ and $i \precceq j$ imply $i \in J$. A vertex $j \in J$ is said to be \emph{maximal} if there is no vertex $k \in J$ such that $j \precceq k$. 
    Moreover, $\partial J \subset J$ denotes the set of maximal vertices and $J^\circ := J \setminus \partial J$.
\end{enumerate}
\end{dfn}

\begin{prop}\label{prop:J_n_filt}
The subsets $J_n$ are semi-ideal of $I$ and they form a filtration:
\begin{align*}
    \cdots \subset J_{n+1} \subset J_n \subset \cdots \subset J_0 = I.
\end{align*}
\end{prop}

To prove it, we prepare the following:

\begin{lem}\label{lem:J_n}
For any $n \geq 0$, we have $J_n^\circ \subset J_{n+1} \subset J_n$.
\end{lem}

We postpone the proof of this lemma until \cref{sec:proofs}.
By using it, we can prove \cref{prop:J_n_filt} easily.

\begin{proof}[Proof of \cref{prop:J_n_filt}]
It is straightforward from \cref{lem:J_n} that the subsets $J_n$ form a filtration.
The remaining is that $J_n$ are semi-ideal.
We prove it by induction on $n$.
For $n=0$, it is clear since $J_0 = I$.
For $j \in J_{n+1}$ and $i \in I$, we assume $i \prec j$.
We have $j \in J_n$ by \cref{lem:J_n} and $i \in J_n^\circ$ since $i \prec j$.
Therefore, $i \in J_{n+1}$ by \cref{lem:J_n}.
\end{proof}

By \eqref{eq:sign}, we obtain the following.

\begin{cor} 
Assume that for some $n \geq 0$, we have $\emptyset \neq J_n =J_{n+1}$. Then $\gamma$ is sign-stable with the stable sign $\boldsymbol{\epsilon}_\gamma^\stab=(-)^{N}$. The linearized action of $\tau$ is given by
\begin{align*}
    x_j(\tau(w)) &= \sum_{k \in I}\phi_{kj} x_k(w).
\end{align*}
\end{cor}

\begin{lem}
If $J_{\nu+1}=\emptyset$ for some $\nu$, then $\tau$ has finite order.
\end{lem}

\begin{proof}
The condition $J_{\nu+1}=\emptyset$ means that $x_j(\tau(w))>0$ for all $w \in \cC_{J_\nu}^-$. In other words, $\tau(w) \in \interior\cC^+$. Recall that by definition of $\{J_\nu\}$, $\tau^{n_0}(w_0) \in \cC_{J_\nu}^-$ for all $w_0 \in \interior\cC^+_{(v_0)}$. Hence $\tau^{n_0+1}(\interior\cC^+_{(v_0)}) \subset \interior(\cC^+_{(v_0)})$. 

Since the action of the cluster modular group on the Fock--Goncharov fan is simplicial, it follows that $\tau^{n_0+1}(\interior\cC^+_{(v_0)}) = \interior(\cC^+_{(v_0)})$. In particular, $\tau^{n_0+1}$ permutes the rays $\bR_{>0}\ell_i^{(v_0)}$. If the order of this permutation is $m$, then $\tau^{m(n_0+1)}$ preserves all these rays. 

Recall that the cones of the Fock--Goncharov fan are in a $\Gamma_\bs$-equivariant bijection with the simplices of the Fomin--Zelevinsky cluster complex, where the rays spanned by the elementary laminations $\ell_i^{(v)}$ correspond to the cluster $\A$-variables $A_i^{(v)}$. Therefore, the observation above implies that $\tau^{m(n_0+1)}$ fixes the initial cluster variables $\{A_i^{(v_0)}\}_{i \in I}$, which exactly means that $\tau^{m(n_0+1)}=1 \in \Gamma_\bs$.  
\end{proof}

\fi

\subsection{Inverse of the cluster Donaldson--Thomas transformation}
As a slight digression, let us investigate the sign stability of the ``reversed path'' of $\gamma_\pi$:
\begin{align*}
    \gamma^{-1}_\pi : v_N \overbar{N-1} v_{N-1} \overbar{N-2} \cdots \overbar{1} v_{1} \overbar{0} v_{0}.
\end{align*}
It represents the mutation loop $\tau^{-1} \in \Gamma_\bs$. 
In general, it is non-trivial whether the sign stability of a path $\gamma$ implies that for the reversed path $\gamma^{-1}$.
A reader interested only in the main results may safely skip this subsection. 

During this subsection, we write $x_i(w):=x_i^{[N]}(w)$ for $i\in I$ and $w \in \X_\bs(\bR^\trop)$.
Let us define the PL functions $\eta_i(w)$ on $\cX_\bs(\bR^\trop)$, $i \in I$ by $\eta_i(w) := x_{N-i}(w)$ if $N-i$ is maximal in $(I, \precceq)$ and the recursive relation
\begin{align}
    \eta_i(w) = x_{N-i}(w) + \sum_{j \in I} a_{N-i, N-j} [\eta_j(w)]_+
\end{align}
along $(I, \precceq)$.
By comparing the definitions of $\eta_i(w)$ and $\xi_i(w)$, one can prove the following properties around $\eta_i(w)$:

\begin{lem}[Corresponding to \cref{lem:xi}]\label{lem:eta}
We have $x^{[N-i]}_{N-i}(w) = \eta_i(w)$ for all $i\in I$ and $w \in \cX_\bs(\bR^\trop)$.
\end{lem}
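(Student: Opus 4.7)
The proof will mirror that of \cref{lem:xi} almost verbatim, with the single structural replacement of ``source'' by ``sink''. The key preliminary is the \emph{sink counterpart} of the source claim established inside the proof of \cref{lem:tau_is_mut_loop}: the vertex $t-1$ is a sink in $Q^{[t]}$ for every $t = 1, \dots, N$. This is immediate, because $t-1$ is a source in $Q^{[t-1]}$ and mutation at a source produces a sink. Consequently, along the reversed path $\gamma_\pi^{-1}: v_N \overbar{N-1} v_{N-1} \overbar{N-2} \cdots \overbar{0} v_0$, each mutation pivots at a sink of the current quiver.

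At a sink $k$ the tropical $\X$-mutation formula \eqref{eq:X_trop_mut} reduces to
\begin{align*}
    x^{(v')}_k = -x^{(v)}_k, \qquad x^{(v')}_i = x^{(v)}_i + \ve^{(v)}_{ik}\,[x^{(v)}_k]_+ \quad (i \neq k),
\end{align*}
with $\ve^{(v)}_{ik} \geq 0$ equal to the number of arrows $i \to k$ in the current quiver. This is the exact ``dual'' of the source mutation formula used in the proof of \cref{lem:xi}, with $[\,\cdot\,]_-$ replaced by $[\,\cdot\,]_+$. Correspondingly, the recursion defining $\eta_i$ involves $[\eta_j]_+$ rather than $[\xi_j]_-$, so the two proofs should be in termwise bijection.

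My plan is then to replicate the scheme of \cref{lem:xi} in reverse time. An outer induction on $i$ along $(I, \precceq)$, now descending from the maximal elements (which play the role of the minimal ones in the forward argument), is combined with an inner induction on the time parameter $t$ decreasing from $N$ along $\gamma_\pi^{-1}$. At each inner step one accumulates a contribution $\ve^{[t]}_{\ell,t-1}\,[\,\cdot\,]_+$ from the current sink mutation, and by the induction hypothesis the bracketed quantity is identified with a previously computed $\eta_j$, so the accumulating sum assembles exactly the right-hand side of the defining recursion for $\eta_i(w)$.

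The principal technical obstacle I expect is the bookkeeping in matching the in-neighborhood multiplicities at the sink $t-1$ of $Q^{[t]}$ with the structural constants $a_{N-i, N-j}$ appearing in the definition of $\eta_i$; this is the sink-side analogue of the ``unaffected until mutated'' argument already present inside the proof of \cref{lem:tau_is_mut_loop}, and is the only part that cannot be obtained by a purely mechanical sign flip of the source-side argument.
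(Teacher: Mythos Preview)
Your proposal is correct and matches the paper's approach: the paper does not spell out a proof of \cref{lem:eta}, but simply states that it follows ``by comparing the definitions of $\eta_i(w)$ and $\xi_i(w)$'', i.e., by the same argument as \cref{lem:xi} under the obvious source/sink and $[\cdot]_-/[\cdot]_+$ duality, which is exactly what you outline. Your identification of the sink claim (that $t-1$ is a sink in $Q^{[t]}$, since mutation at the source $t-1$ in $Q^{[t-1]}$ produces a sink) and of the bookkeeping for the incident multiplicities is the right content of that analogy; note that because sink mutations, like source mutations, leave all non-incident arrows unchanged, this bookkeeping is no harder than in the forward case and really is a mechanical transcription of the argument inside the proof of \cref{lem:tau_is_mut_loop}.
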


Therefore, we have
\begin{align}\label{eq:sign_rev}
    \bep_{\gamma^{-1}_\pi}(w) = (\sgn(\eta_0(w)), \sgn(\eta_1(w)), \dots, \sgn(\eta_{N-1}(w)))
\end{align}
in contrast to \eqref{eq:sign}.

\begin{lem}[Corresponding to \cref{lem:action_tau}]\label{lem:action_tau^-1}
The mutation loop $\tau^{-1}$ satisfies
\begin{align}\label{eq:tau_formula_inv}
    x_i(\tau^{-1}(w)) = 
    -\eta_{N-i}(w) + \sum_{j \in I} a_{ji}[\eta_{N-j}(w)]_+
\end{align}
for all $i \in I$ and $w \in \cX_\bs(\bR^\trop)$.
\end{lem}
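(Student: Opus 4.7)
The plan is to mirror the proof of \cref{lem:action_tau} step by step, exploiting the symmetry between the forward path $\gamma_\pi$ and its reverse $\gamma^{-1}_\pi$. The guiding dictionary is as follows: in the forward argument, at each step $v_t$ one mutates at the source $t$ of $Q^{[t]}$, which produces contributions of the form $a_{is}[\xi_s(w)]_-$; in the backward argument, at $v_t$ one mutates at vertex $t-1$, which is a sink of $Q^{[t]}$ with arrows coming from all of its original neighbors in $Q$. This sink structure follows from the same reasoning as in the proof of \cref{lem:tau_is_mut_loop}: in $Q^{[t-1]}$ the vertex $t-1$ is a source with arrows to all of its neighbors in $Q$, and the forward mutation at $t-1$ reverses all these arrows. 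Consequently the backward tropical $\X$-mutation formula produces contributions of the form $a_{t-1,i}\,[\,\cdot\,]_+$, in contrast to the $[\,\cdot\,]_-$ terms in the forward case.

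Concretely, I would first establish the following intermediate identity by downward induction on $t$, analogous to equation \eqref{eq:induction_3} in the proof of \cref{lem:action_tau}: for $0 \leq t \leq i$,
\begin{align*}
    x^{[t]}_i(w) = -\eta_{N-i}(w) + \sum_{s:\, t \leq s < i,\; s \precov i} a_{si}\,[\eta_{N-s}(w)]_+.
\end{align*}
For the base case $t = i$, the backward mutation at $i$ negates the coordinate $x^{[i+1]}_i(w)$, and \cref{lem:eta} identifies the result with $-\eta_{N-i}(w)$, matching the formula with empty sum. For the inductive step $t \mapsto t-1$, admissibility (which forces any arrow between $t-1$ and $i$ in $Q$ to point from $t-1$ to $i$, since $t-1 < i$) combined with the sink structure yields $b^{[t]}_{i,t-1} = a_{t-1,i}$ precisely when $t-1 \precov i$; the backward tropical mutation formula, together with \cref{lem:eta} to identify $x^{[t]}_{t-1}(w)$ with $\eta_{N-t+1}(w)$ and the induction hypothesis for $x^{[t]}_i(w)$, then closes the step.

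Setting $t = 0$ and using $x_i(\tau^{-1}(w)) = x^{[0]}_i(w)$ (via the identification at the end of the backward representation path $\gamma^{-1}_\pi$), the constraint $s \precov i$ in the sum may be dropped by extending it to all $j \in I$, since $a_{ji} = 0$ otherwise. This yields precisely the formula in the statement.

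The main obstacle I foresee is the careful bookkeeping of signs and indices in the inductive step: one must correctly identify $x^{[t]}_{t-1}(w)$ with $\eta_{N-t+1}(w)$ by a consistent application of \cref{lem:eta}, accounting for the fact that mutation at a vertex negates its tropical coordinate there. Once this is pinned down, the rest of the argument is a direct transcription of the proof of \cref{lem:action_tau} under the substitutions source $\leftrightarrow$ sink, $\xi \leftrightarrow \eta$, $[\,\cdot\,]_- \leftrightarrow [\,\cdot\,]_+$, and $\precov \leftrightarrow \succov$.
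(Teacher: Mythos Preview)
Your proposal is correct and is precisely what the paper intends: the paper gives no explicit proof of this lemma, only the remark that it follows by comparing the definitions of $\eta_i$ and $\xi_i$, i.e.\ by mirroring the proof of \cref{lem:action_tau} under the substitutions source $\leftrightarrow$ sink, $\xi \leftrightarrow \eta$, $[\cdot]_- \leftrightarrow [\cdot]_+$ that you describe. Your downward-induction formula is the exact analogue of \eqref{eq:induction_3}, and your observation that $t-1$ is a sink in $Q^{[t]}$ (so that mutation there contributes $a_{t-1,i}[\,\cdot\,]_+$ terms without creating new arrows) is the key structural input; the bookkeeping caveat you flag is indeed the only delicate point, and it is complicated further by an apparent off-by-one in the paper's stated form of \cref{lem:eta} (your usage $x^{[s+1]}_s = \eta_{N-s}$ is the internally consistent reading).
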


Thus we obtain the following\footnote{
We can get \cref{cor:clusterDT_inv} from the fact that the cluster modular group $\Gamma_\bs$ acts on the fan $\{ \text{the faces of the cone } \cC^+_{(v)} \mid v \in \bExch_\bs \}$ simplicially.
}:
\begin{cor}[Corresponding to \cref{cor:clusterDT}]\label{cor:clusterDT_inv}
The mutation loop $\tau^{-1}$ satisfies $x_i(\tau^{-1}(w))=-x_i(w)$ for all $w \in \cC_{(v_0)}^-$.
\end{cor}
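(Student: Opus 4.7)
The plan is to mirror the proof of \cref{cor:clusterDT} step for step, using \cref{lem:action_tau^-1} in place of \cref{lem:action_tau} and the recursively defined functions $\eta_i$ in place of $\xi_i$. Recall that in the proof of \cref{cor:clusterDT}, the hypothesis $w \in \cC^+_{(v_0)}$ forces $\xi_i(w) = x_i(w) \geq 0$ by induction along $(I, \precceq)$, so that every $[\xi_j(w)]_-$ vanishes in \cref{lem:action_tau} and the formula collapses to $x_i(\tau(w)) = -x_i(w)$. I will run the analogous argument under the hypothesis $w \in \cC^-_{(v_0)}$, using the observation that $\eta_i$ obeys a recursion with $[\cdot]_+$ in the role of $[\cdot]_-$, so that non-positivity of the $\eta_j$'s causes all the correction terms to vanish.

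Concretely, for $w \in \cC^-_{(v_0)}$ I would prove by induction on $(I, \precceq)$, starting from the maximal elements and descending, that $\eta_i(w) = x_{N-i}(w) \leq 0$ for every $i \in I$. The base case holds by definition of $\eta_i$ when $N-i$ is maximal. For the inductive step, once $\eta_j(w) \leq 0$ is known for all relevant $j$, the defining relation
\[
\eta_i(w) = x_{N-i}(w) + \sum_{j \in I} a_{N-i,\,N-j}[\eta_j(w)]_+
\]
collapses to $\eta_i(w) = x_{N-i}(w)$ since each $[\eta_j(w)]_+$ equals zero. Plugging the conclusions $\eta_{N-i}(w) = x_i(w)$ and $[\eta_{N-j}(w)]_+ = 0$ into \cref{lem:action_tau^-1} then immediately yields $x_i(\tau^{-1}(w)) = -x_i(w)$.

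The hard part is the non-positivity $x_{N-i}(w) \leq 0$ that drives the base case: the coordinates $x_i = x_i^{[N]}$ in force in this subsection and the cone $\cC^-_{(v_0)}$ (defined via $x_i^{[0]}$) live at different vertices of $\bExch_\bs$, so the implication $w \in \cC^-_{(v_0)} \Rightarrow x_i^{[N]}(w) \leq 0$ is not tautological and requires a separate argument. The footnoted slicker route bypasses this subtlety altogether: since the cluster modular group acts simplicially on the fan $\{\cC^+_{(v)} \mid v \in \bExch_\bs\}$, and \cref{cor:clusterDT} identifies $\tau$ on $\cC^+_{(v_0)}$ with the linear map $w \mapsto -w$, the inverse $\tau^{-1}$ must restrict on $\cC^-_{(v_0)} = \tau(\cC^+_{(v_0)})$ to the same linear negation, yielding $x_i(\tau^{-1}(w)) = -x_i(w)$ in the natural coordinates without recourse to \cref{lem:action_tau^-1}.
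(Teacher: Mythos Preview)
Your two approaches are exactly the two the paper offers: the phrase ``Thus we obtain the following'' points to the direct parallel with \cref{cor:clusterDT} via \cref{lem:action_tau^-1} (your first approach), and the footnote supplies the simplicial-action shortcut (your second). In that sense you have faithfully reconstructed the paper's reasoning.

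That said, the gap you flag in the first approach is not merely a missing step requiring ``a separate argument'' --- it reflects a genuine typo in the statement. The cone $\cC^-_{(v_0)}$ is defined via the chart $\bx^{[0]}$, while the subsection's convention sets $x_i = x_i^{[N]}$; the base-case assertion $x_i^{[N]}(w) \leq 0$ for $w \in \cC^-_{(v_0)}$ is simply false in general. Indeed, using $x_i^{[N]}(\tau^{-1}(w)) = x_i^{[0]}(w)$ (which follows from the action formula and $\tau(v_N)=v_0$), the corollary as written is equivalent to $x_i^{[0]}(\tau(w)) = -x_i^{[0]}(w)$ for $w \in \cC^-_{(v_0)}$, i.e.\ to $\tau$ acting as $-\mathrm{Id}$ on $\cC^-_{(v_0)}$ in the $v_0$-chart. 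But \cref{lem:xi_M}(3)$_0$ (or a direct $A_2$ computation) shows $\tau$ acts there as the Coxeter matrix $\Phi$, not as $-\mathrm{Id}$.

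The evident fix is to replace $\cC^-_{(v_0)}$ by $\cC^-_{(v_N)}$; then your first approach goes through with no gap, since $w \in \cC^-_{(v_N)}$ is literally $x_i^{[N]}(w) \leq 0$. Your second approach, read carefully, proves the statement $x_i^{[0]}(\tau^{-1}(w)) = -x_i^{[0]}(w)$ for $w \in \cC^-_{(v_0)}$ --- correct, and sufficient for the only downstream use in \cref{cor:SS_infinite_inv} (which needs just $\tau^{-1}(\cC^-_{(v_0)}) = \cC^+_{(v_0)}$) --- but note that this is the $v_0$-chart identity, not the literal $v_N$-chart formula printed in the corollary.
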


\begin{lem}[Corresponding to \cref{lem:M^tr*Phi}]\label{lem:M*Phi^-1}
If the quiver $Q$ is representation infinite, then each entry of the matrix $M \Phi^{-n}$ is non-negative for any $n \geq 0$.
\end{lem}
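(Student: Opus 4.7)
The plan is to deduce this lemma from the already-established \cref{lem:M^tr*Phi} by applying the latter to the opposite quiver $Q^{\mathrm{op}}$.

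First, I will verify how $M$ and $\Phi$ transform under the passage $Q \leadsto Q^{\mathrm{op}}$. Since a directed path in $Q^{\mathrm{op}}$ from $i$ to $j$ is the same as a directed path in $Q$ from $j$ to $i$, the path-count matrix of $Q^{\mathrm{op}}$ is $M^{\mathrm{op}} = M^{\tr}$. Substituting into the definition of the Coxeter matrix gives
\begin{align*}
\Phi^{\mathrm{op}} = -(M^{\mathrm{op}})^{-1} (M^{\mathrm{op}})^{\tr} = -(M^{\tr})^{-1} M,
\end{align*}
which coincides with $\Phi^{-1} = -(M^{\tr})^{-1} M$ obtained by directly inverting $\Phi = -M^{-1} M^{\tr}$. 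Thus $\Phi^{\mathrm{op}} = \Phi^{-1}$.

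Next, I will check that the hypotheses of \cref{lem:M^tr*Phi} transfer cleanly: $Q^{\mathrm{op}}$ is again acyclic, is representation infinite (this property is invariant under passage to the opposite algebra, and in the acyclic case depends only on the underlying graph, which is common to $Q$ and $Q^{\mathrm{op}}$), and admits an admissible labeling (obtained by order-reversing any admissible labeling of $Q$). Although $M$ and $\Phi$ each depend on the choice of admissible labeling, relabeling conjugates both by the same permutation matrix, so the property ``every entry is non-negative'' is insensitive to this choice. Applying \cref{lem:M^tr*Phi} to $Q^{\mathrm{op}}$ then yields
\begin{align*}
(M^{\mathrm{op}})^{\tr} (\Phi^{\mathrm{op}})^n = M \, \Phi^{-n} \geq 0
\end{align*}
entrywise for all $n \geq 0$, which is the desired conclusion.

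The substantive content of the argument is entirely absorbed into \cref{lem:M^tr*Phi}; what remains here is pure bookkeeping. The only potential obstacle is the verification of the identity $\Phi^{\mathrm{op}} = \Phi^{-1}$ together with the invariance of the lemma's hypotheses and conclusion under the $Q \leadsto Q^{\mathrm{op}}$ operation, neither of which requires any additional representation-theoretic input beyond what is already marshalled for \cref{lem:M^tr*Phi}.
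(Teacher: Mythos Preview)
Your argument is correct, but it differs from the paper's approach. The paper proves \cref{lem:M^tr*Phi} and \cref{lem:M*Phi^-1} simultaneously and directly from representation theory: using that the $i$th row of $M$ is $(\bdim P_i)^\tr$ and that $\Phi^{-1}$ implements $\tau^{-1}$ on dimension vectors of non-injective modules, the $i$th row of $M\Phi^{-n}$ equals $(\bdim \tau^{-n} P_i)^\tr$, which is non-negative because the preprojective modules $\tau^{-n} P_i$ are nonzero whenever $Q$ is representation infinite. Your route is instead a formal reduction: you observe that $M^{\mathrm{op}} = M^{\tr}$ and $\Phi^{\mathrm{op}} = \Phi^{-1}$, and then invoke \cref{lem:M^tr*Phi} for $Q^{\mathrm{op}}$. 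This is clean and avoids repeating the representation-theoretic argument, at the cost of a small bookkeeping digression about admissible labelings and permutation-conjugation invariance of entrywise non-negativity (which you handle correctly). Conceptually, the two proofs are dual: the paper's direct argument for $M\Phi^{-n}$ via preprojectives \emph{is} exactly what \cref{lem:M^tr*Phi} for $Q^{\mathrm{op}}$ amounts to once one unwinds the equivalence $\mathrm{mod}\,kQ^{\mathrm{op}} \simeq (\mathrm{mod}\,kQ)^{\mathrm{op}}$, which swaps projectives with injectives and $\tau$ with $\tau^{-1}$.
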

This lemma is proved in \cref{sec:proofs} together with \cref{lem:M^tr*Phi}.
By using these lemmas, one can prove the following by a discussion parallel to the proof of \cref{lem:xi_M}.

\begin{lem}
If the quiver $Q$ is representation infinite, then we have the following for $n \geq 0$, $i \in I$ and $w \in \cC^+_{(v_0)}$:
\begin{enumerate}
\renewcommand{\labelenumi}{(\arabic{enumi})$_n$}
    \item $\eta_{N-i}(\tau^{-n}(w)) \geq 0$.
    \item $\eta_{N-i}(\tau^{-n}(w)) = \sum_{\ell \in I} m_{i \ell} x_\ell(\tau^{-n}(w))$.
    \item $x_i(\tau^{-n-1}(w)) = \sum_{k \in I} \psi_{ik} x_k(\tau^{-n}(w))$.
\end{enumerate}
Here, $\psi_{ik}$ denotes the $(i,k)$-component of $\Phi^{-1}$.
\end{lem}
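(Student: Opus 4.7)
The plan is to mirror the proof of \cref{lem:xi_M} essentially verbatim, with the roles of $\xi$, $\tau$, $M^\tr$, $\Phi$, and $\cC^-_{(v_0)}$ replaced by $\eta$, $\tau^{-1}$, $M$, $\Phi^{-1}$, and $\cC^+_{(v_0)}$ respectively. The inductive structure is the same: a joint induction on $n$, with the implications $(1)_0 \,\&\, (2)_0$ direct, $(1)_n \,\&\, (2)_n \Rightarrow (3)_n$, and $(3)_0\text{--}(3)_n \Rightarrow (1)_{n+1} \,\&\, (2)_{n+1}$.

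\textbf{Base case.} For $w \in \cC^+_{(v_0)}$ we have $x_\ell(w) \geq 0$ for all $\ell$, and the defining recursion
\begin{align*}
\eta_{N-i}(w) = x_i(w) + \sum_{k \succov i} a_{ik}[\eta_{N-k}(w)]_+
\end{align*}
is a sum of non-negative terms, proving $(1)_0$. For $(2)_0$ I proceed by induction along $(I,\precceq)$ starting from the maximal vertices: the base of that sub-induction is immediate since $m_{i\ell} = \delta_{i\ell}$ for maximal $i$, and the inductive step combines $(1)_0$, the induction hypothesis, and the identity $\sum_{k} a_{ik} m_{k\ell} = (1-\delta_{i\ell})m_{i\ell}$ of \cref{lem:A*M} to collapse the sum into $\sum_\ell m_{i\ell} x_\ell(w)$.

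\textbf{Inductive step.} To deduce $(3)_n$ from $(1)_n$, $(2)_n$, I substitute into the formula of \cref{lem:action_tau^-1}, then replace both $\eta_{N-i}$ and $\eta_{N-j}$ using $(2)_n$ (dropping the $[\cdot]_+$ by $(1)_n$, interpreted via sign with care that $\eta \geq 0$ makes $[\eta]_+ = \eta$). The resulting coefficient of $x_\ell(\tau^{-n}(w))$ is $-m_{i\ell} + \sum_j a_{ji} m_{j\ell}$. The key algebraic identity to record is
\begin{align*}
\Phi^{-1} = -(M^\tr)^{-1} M = -(\mathrm{Id} - A)^\tr M,
\end{align*}
which, together with $M^{-1} = \mathrm{Id}-A$ (already used in \cref{lem:phi}), gives precisely $\psi_{i\ell} = -m_{i\ell} + \sum_j a_{ji} m_{j\ell}$, establishing $(3)_n$. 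Then $(1)_{n+1}$ and $(2)_{n+1}$ follow by a second induction along $(I,\precceq)$ from maximal vertices, using the iterated form $x_i(\tau^{-n-1}(w)) = \sum_k \psi^{(n+1)}_{ik} x_k(w)$ from $(3)_0\text{--}(3)_n$ together with \cref{lem:M*Phi^-1} to obtain the non-negativity, and \cref{lem:A*M} again to collapse the recursion for $\eta_{N-i}$ into $\sum_\ell m_{i\ell} x_\ell(\tau^{-n-1}(w))$.

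\textbf{Main obstacle.} There is no genuine obstacle since the argument is parallel; the only genuinely new check is the algebraic computation that $\Phi^{-1}$ has the required form $-M + A^\tr M$, so that the coefficients produced by \cref{lem:action_tau^-1} match the entries $\psi_{i\ell}$. This relies on $M^{-1} = \mathrm{Id}-A$, which is the content (via transposition) of the computation already performed in the proof of \cref{lem:phi}. The other non-trivial input, the non-negativity of $M\Phi^{-n}$, is precisely \cref{lem:M*Phi^-1}, whose proof is deferred to \cref{sec:proofs}.
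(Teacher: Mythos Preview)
Your proposal is correct and follows exactly the approach the paper itself indicates, namely a step-by-step parallel of the proof of \cref{lem:xi_M} with the substitutions $\xi \leftrightarrow \eta$, $\tau \leftrightarrow \tau^{-1}$, $M^\tr \leftrightarrow M$, $\Phi \leftrightarrow \Phi^{-1}$, $\cC^-_{(v_0)} \leftrightarrow \cC^+_{(v_0)}$. The one computation you single out as new, $\Phi^{-1} = -(\mathrm{Id}-A)^\tr M$ giving $\psi_{i\ell} = -m_{i\ell} + \sum_j a_{ji} m_{j\ell}$, is exactly the counterpart of \cref{lem:phi} that is needed, and your derivation of it from $M^{-1} = \mathrm{Id}-A$ is correct.
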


Therefore, we get the following:
\begin{thm}\label{cor:SS_infinite_inv}
If $Q$ is representation infinite, then $\gamma_\pi^{-1}$ is sign-stable on $\cC^+_{(v_0)} \cup \cC^-_{(v_0)}$ with the stable sign $\boldsymbol{\epsilon}_{\gamma_\pi^{-1}}^\stab = \underbrace{(+, \cdots, +)}_N$ and the stable presentation matrix $\Phi^{-1}$ for any admissible labeling $\pi$ of $Q$.
\end{thm}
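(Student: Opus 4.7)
The plan is to run the argument of \cref{cor:SS_infinite} verbatim, with every object on the $\tau$-side replaced by its $\tau^{-1}$-side counterpart: \cref{cor:clusterDT} by \cref{cor:clusterDT_inv}, the coordinate formulas \eqref{eq:sign} and \cref{lem:xi_M} by \eqref{eq:sign_rev} and the unnamed three-part lemma stated just above \cref{cor:SS_infinite_inv}, and \cref{lem:M^tr*Phi} by \cref{lem:M*Phi^-1}. All of these inverse-side ingredients have already been assembled in the preceding subsection, so the proof is a direct combination.

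Concretely, first I would split the analysis by the two components of the stability domain. For $w \in \cC^-_{(v_0)}$, \cref{cor:clusterDT_inv} sends $w$ into $\cC^+_{(v_0)}$ under $\tau^{-1}$ (with coordinates negated), so it suffices to treat $w \in \cC^+_{(v_0)}$. Second, for such $w$, part (1)$_n$ of the unnamed lemma yields $\eta_{N-i}(\tau^{-n}(w)) \geq 0$ for all $i \in I$ and $n \geq 0$, which in combination with \eqref{eq:sign_rev} places $\bep_{\gamma_\pi^{-1}}(\tau^{-n}(w))$ in $\{+,0\}^N$. Iterating parts (2)$_n$ and (3)$_n$ gives the explicit formula $\eta_{N-i}(\tau^{-n}(w)) = \sum_{k \in I} (M \Phi^{-n})_{ik}\, x_k(w)$, so that the desired strict positivity for large $n$ on generic $w$ is controlled by the non-negativity supplied by \cref{lem:M*Phi^-1}. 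This establishes sign stability with stable sign $\bep_{\gamma_\pi^{-1}}^\stab = (+,\dots,+)$. Third, part (3)$_0$ exhibits the linearization of $\tau^{-1}$ on the corresponding domain of linearity as multiplication by the matrix $(\psi_{ik}) = \Phi^{-1}$, so the stable presentation matrix is $\Phi^{-1}$, as claimed.

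The step I expect to require the most care is the passage from the non-strict inequality $\eta_{N-i}(\tau^{-n}(w)) \geq 0$ to a strict one, since the definition of stable sign demands strict signs for every $w \in \Omega \setminus \{0\}$. This is exactly the point where the representation-infiniteness of $Q$ intervenes: it enters through the entrywise non-negativity of $M \Phi^{-n}$ from \cref{lem:M*Phi^-1}, which, together with a Perron--Frobenius-type argument on the cone $\cC^+_{(v_0)}$ (set up analogously to \cref{thm:Perron-Frobenius} and the proof of \cref{cor:SS_infinite}), upgrades $\geq 0$ to $>0$ once $n$ is large enough.
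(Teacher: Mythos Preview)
Your approach is exactly what the paper does: it states that \cref{cor:SS_infinite_inv} follows by combining \eqref{eq:sign_rev} with the unnamed three-part lemma and \cref{cor:clusterDT_inv}, in direct parallel to how \cref{cor:SS_infinite} was obtained from \eqref{eq:sign}, \cref{lem:xi_M}, and \cref{cor:clusterDT}. Your final-paragraph concern about upgrading the non-strict inequality $\eta_{N-i}(\tau^{-n}(w)) \geq 0$ to a strict one is not addressed in the paper at all (neither here nor in the proof of \cref{cor:SS_infinite}); no additional Perron--Frobenius step is invoked, so that subtlety is simply left implicit.
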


\begin{rmk}
As we mentioned at the beginning of this subsection, it seems to be non-trivial whether the sign stability of a path $\gamma$ implies that for the reversed path $\gamma^{-1}$.
However, we do not know any counter-example.

From the results \cref{cor:clusterDT,cor:clusterDT_inv}, we are tempted to conjecture that if $\gamma$ is sign-stable with the stable sign $\bep^\mathrm{stab}_\gamma = (\epsilon_0, \dots, \epsilon_{N-1})$, then the reverse $\gamma^{-1}$ is also sign-stable with the stable sign $\bep^\mathrm{stab}_{\gamma^{-1}} = (-\epsilon_{N-1}, \dots, -\epsilon_0)$.
There is an example of a path $\gamma$ such that both $\gamma$ and $\gamma^{-1}$
are sign-stable but their stable signs are not related as above.
\end{rmk}
\section{Finite-tame-wild trichotomy}\label{sec:trich}

We are ready to prove our main theorem:
\begin{thm}\label{thm:trich}
Let $\pi$ be an admissible labeling of an acyclic quiver $Q$.
Then,
\begin{enumerate}
    \item $Q$ is representation finite if and only if $\gamma_\pi$ is not basic sign-stable.
    \item $Q$ is tame if and only if $\gamma_\pi$ is basic sign-stable and $\lambda_\tau=1$.
    \item $Q$ is wild if and only if $\gamma_\pi$ is basic sign-stable and $\lambda_\tau>1$.
\end{enumerate}
Here $\lambda_\tau=\lambda_\tau^{(v_0)}$ denotes the cluster stretch factor (\cref{d:cluster stretch factor}) of the mutation loop $\tau$. 
\end{thm}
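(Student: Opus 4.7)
The plan is to prove (1) first, then refine the representation infinite case using the spectral radius of the Coxeter matrix $\Phi$.

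For (1), the ``if'' direction is immediate from \cref{cor:SS_infinite}: when $Q$ is representation infinite, $\gamma_\pi$ is sign-stable on the larger set $\cC^+_{(v_0)} \cup \cC^-_{(v_0)} \supset \Omega^{\mathrm{can}}_{(v_0)}$, hence basic sign-stable. For the converse, I would exploit that a Dynkin quiver has a finite cluster complex and hence a finite cluster modular group, so $\tau$ has some finite order $d$. Choosing $w \in \interior \cC^+_{(v_0)}$, the recursion \eqref{eq:xi_definition} gives $\xi_i(w) = x_i(w) > 0$ inductively along $(I,\precceq)$, so $\bep_{\gamma_\pi}(w) = (+,\dots,+)$; while \cref{cor:clusterDT} places $\tau(w) \in \interior \cC^-_{(v_0)}$ and the same recursion yields $\bep_{\gamma_\pi}(\tau(w)) = (-,\dots,-)$. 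The periodicity $\tau^d = \mathrm{id}$ then forces both signs to recur infinitely often along the orbit, so no single strict sign can stabilize.

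For (2) and (3), assume $Q$ is representation infinite. Then \cref{cor:SS_infinite} supplies basic sign-stability of $\gamma_\pi$ with stable presentation matrix $\Phi$; the path $\gamma_\pi$ is visibly fully-mutating (it mutates once at each vertex) and $\Omega^{\mathrm{can}}_{(v_0)}$ is tame (it meets $\interior \cC^+_{(v_0)}$), so \cref{thm:Perron-Frobenius} yields the identification
\[
\lambda_\tau = \rho(\Phi).
\]
Statements (2) and (3) then reduce to the spectral dichotomy $\rho(\Phi) = 1$ for tame $Q$ and $\rho(\Phi) > 1$ for wild $Q$, with the converse directions following from (1) together with this dichotomy: basic sign-stability combined with $\lambda_\tau = 1$ excludes the finite case via (1) and the wild case via $\rho(\Phi) > 1$, leaving tame, and symmetrically for (3).

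The main obstacle is the spectral dichotomy above; this is the only step drawing on representation theory beyond what the paper has already set up. I would invoke the classical fact, going back to A'Campo and further developed by Dlab--Ringel, that $\Phi$ is conjugate up to sign to a Coxeter element of the Weyl group attached to the underlying graph $\Delta$, whose spectral radius equals $1$ precisely when $\Delta$ is Dynkin or Euclidean and strictly exceeds $1$ otherwise. The Dynkin sub-case is already excluded by (1), so in the Euclidean (tame) case one gets $\lambda_\tau = 1$ and in the wild case $\lambda_\tau > 1$, completing the proof.
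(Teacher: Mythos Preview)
Your proposal is correct and follows essentially the same route as the paper: finite order of $\tau$ plus the alternating signs $(+,\dots,+)$ and $(-,\dots,-)$ for the Dynkin case, \cref{cor:SS_infinite} for basic sign-stability in the infinite case, and then the classical spectral dichotomy $\rho(\Phi)=1$ versus $\rho(\Phi)>1$ for tame versus wild. The only cosmetic differences are that the paper cites \cite{ASS12} directly for the finite order of $\tau$ (rather than arguing via finiteness of the cluster complex) and cites \cite{Pena} and \cite{ACam,Rin94} separately for the tame and wild spectral-radius computations, whereas you package both into a single Coxeter-element statement.
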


\begin{proof}
Let $\gamma := \gamma_\pi$.
Let us assume that $Q$ is representation finite.
Then, $\tau$ is of finite order (see \cite{ASS12} for instance) in $\Gamma_\bs$.
Let $r$ be the order of $\tau$.
Then, for $w \in \cC^+_{(v_0)}$, 
\begin{align*}
    \bep_\gamma(w) = (+, \dots, +) \quad \text{and} \quad
    \bep_\gamma(\tau(w)) = (-, \dots, -)
\end{align*}
by the proofs of \cref{cor:clusterDT} and \cref{lem:xi_M} (1)$_0$.
Therefore, we have
\begin{align*}
    \bep_\gamma(\tau^{nr}(w)) = (+, \dots, +) \quad \text{and} \quad
    \bep_\gamma(\tau^{nr+1}(w)) = (-, \dots, -)
\end{align*}
for any $n \geq 0$ and $w \in \cC^+_{(v_0)}$ by the periodicity of $\tau$, which implies that the path $\gamma$ is not basic sign-stable.

Next, we assume that $Q$ is representation infinite.
Then, $\gamma$ is basic sign-stable by \cref{cor:SS_infinite}.
Therefore, (1) is proved.
Moreover, the cluster stretch factor $\lambda_\tau$ is exactly the spectral radius of the Coxeter matrix $\Phi$, since it gives the stable presentation matrix of $\tau$.
If $Q$ is tame, an explicit formula of the characteristic polynomial of $\Phi$ is known \cite[Section 1.5]{Pena}.
By this formula, one can deduce $\lambda_\tau= 1$.
On the other hand, if $Q$ is wild, it is known that $\lambda_\tau >1$ \cite{ACam,Rin94}.
Thus, we get (2) and (3).
\end{proof}

\begin{ex}\label{ex:D4_sign}
Let $Q$ be the quiver with an admissible labeling $\pi$ shown in the left of \cref{fig:D4}.
It is of type $D_4$, which is representation finite. Its Coxeter matrix is
\begin{align*}
    \Phi = 
    \begin{pmatrix}
        0 & 1 & 0 & 0 \\
        1 & 1 & 1 & 1 \\
        -1 & -1 & -1 & 0 \\
        -1 & -1 & 0 & -1
    \end{pmatrix}.
\end{align*}
The admissible sequence $\gamma_\pi$  is not basic sign-stable by \cref{thm:trich}.
Indeed, the sign of $\gamma=\gamma_\pi$ at an initial point $w\in \interior \cC_{(v_0)}^+$ evolves as shown in the left column of \cref{tab:D4_sign}. One can easily see its 4-periodicity, which is unstable. 
The Coxeter matrix $\Phi$ is equal to the presentation matrix $E_{\gamma}^{(-,-,-,-)}$.
One can verify that $\Phi^3 = - \mathrm{Id}$, hence $\Phi$ does not have the Perron--Frobenius property.

Next, adding an arrow to $Q$ as shown in the right of \cref{fig:D4}, we obtain a wild acyclic quiver $Q'$. Take an admissible labeling $\pi'$ in the same way as $\pi$. 
Then by \cref{thm:trich}, the admissible sequence $\gamma'=\gamma_{\pi'}$ is basic sign-stable with the stable presentation matrix (= Coxeter matrix)
\begin{align*}
    \Phi' = \begin{pmatrix}
        0 & 1 & 0 & 0\\
        2 & 2 & 2 & 1\\
        1 & 1 & 0 & 1\\
        -2& -2& -1& -1 
    \end{pmatrix}.
\end{align*}
Its characteristic polynomial is
\begin{align*}
    x^4 - x^3 -3x^2 -x +1,
\end{align*}
and the cluster stretch factor of $\tau$ (= spectral radius of $\Phi'$) is given by its largest root
\begin{align*}
    \lambda_\tau = \frac{1}{4}\Big({\textstyle \sqrt{21} + \sqrt{2 \sqrt{21} + 6} +1 }\Big) = 2.369205407092467...
\end{align*}

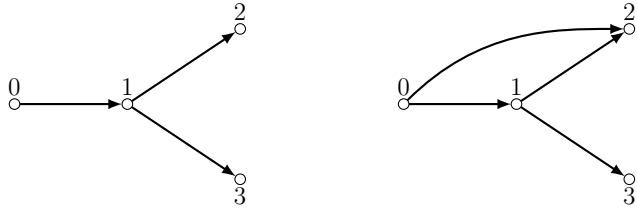
\begin{figure}[h]
    \centering
    \begin{tikzpicture}
    \draw(0,0) circle(2pt) node[above,scale=0.8]{$0$};
    \draw(1.5,0) circle(2pt) node[above,scale=0.8]{$1$};
    \draw(3,1) circle(2pt) node[above,scale=0.8]{$2$};
    \draw(3,-1) circle(2pt) node[below,scale=0.8]{$3$};
    \qarrowop{1.5,0}{0,0};
    \qarrowop{3,1}{1.5,0};
    \qarrowop{3,-1}{1.5,0};
    \end{tikzpicture}
    \qquad \qquad
    \begin{tikzpicture}
    \draw(0,0) circle(2pt) node[above,scale=0.8]{$0$};
    \draw(1.5,0) circle(2pt) node[above,scale=0.8]{$1$};
    \draw(3,1) circle(2pt) node[above,scale=0.8]{$2$};
    \draw(3,-1) circle(2pt) node[below,scale=0.8]{$3$};
    \qarrowop{1.5,0}{0,0};
    \qarrowop{3,1}{1.5,0};
    \qarrowop{3,-1}{1.5,0};
    \draw [->, >=latex, shorten >=2pt,shorten <=2pt, thick](0,0) .. controls (1,1) and (2,1) .. (3,1);
    \end{tikzpicture}
    \caption{Left: a representation finite quiver $Q$ (type $D_4$) with an admissible labeling $\pi$. Right: a wild quiver $Q'$ with an admissible labeling $\pi'$.}
    \label{fig:D4}
\end{figure}

\begin{table}[h]
    \centering
    \begin{tabular}{c|c|c}
    $n$ & $\bep_{\gamma}(\tau^n(w))$ & $\bep_{\gamma'}(\tau^n(w'))$\\
    \hline
    0 & $(+,+,+,+)$ & $(+,+,+,+)$\\
    1 & $(-,-,-,-)$ & $(-,-,-,-)$\\
    2 & $(-,-,-,-)$ & $(-,-,-,-)$\\
    3 & $(-,-,-,-)$ & $(-,-,-,-)$\\
    4 & $(+,+,+,+)$ & $(-,-,-,-)$\\
    5 & $(-,-,-,-)$ & $(-,-,-,-)$\\
    6 & $(-,-,-,-)$ & $(-,-,-,-)$\\
    7 & $(-,-,-,-)$ & $(-,-,-,-)$\\
    8 & $(+,+,+,+)$ & $(-,-,-,-)$\\
    9 & $(-,-,-,-)$ & $(-,-,-,-)$\\
    10 & $(-,-,-,-)$ & $(-,-,-,-)$\\
       & $\vdots$ & $\vdots$
    \end{tabular}
    \caption{The signs for $\gamma = \gamma_\pi$ and $\gamma' = \gamma_{\pi'}$ at $\tau^n(w)$ and $\tau^n(w')$ for $w \in \interior \cC^+_{(v_0)}$ and $w' \in \interior \cC^+_{(v'_0)}$ with $Q^{(v_0)} = Q$ and $Q^{(v'_0)} = Q'$, respectively.}
    \label{tab:D4_sign}
\end{table}
\end{ex}
\section{Dynamics of the cluster Donaldson--Thomas transformations}\label{sec:dyn}
Here, we study some dynamical properties of the cluster DT transformation $\tau$.

\subsection{Entropy}\label{subsec:entropy}
In this subsection, we compute several kinds of entropies of the cluster Donaldson--Thomas transformation $\tau$ of an acyclic quiver $Q$.
First, we recall some results on entropies of mutation loops.

\begin{defi}
We say that an invertible matrix $M$ satisfies the \emph{palindromicity property} if the characteristic polynomials of $M$ and $(M^{-1})^{\tr}$ are the same up to the overall sign.
\end{defi}

We conjectured that the presentation matrix $E_\gamma^{\bep_\gamma(w)}$ of a representation path $\gamma$ of a mutation loop at any point $w \in \cX_\bs(\bR^\trop)$ satisfies the palindromicity property \cite[Conjecture 3.13]{IK19}.
It still remains open and there is no counter-example so far.

For a mutation loop $\phi \in \Gamma_\bs$, let us consider its actions
\begin{align*}
    \phi_a: \cA_\bs \to \cA_\bs \quad \text{and} \quad \phi_x: \cX_\bs \to \cX_\bs
\end{align*}
on the cluster varieties $\cA_\bs$ and $\cX_\bs$, respectively. In each case, the restriction of the action to a cluster chart gives a birational map on $(\bC^\ast)^N$, so we can consider its \emph{algebraic entropy} \cite{BV99}. Since the algebraic entropy is invariant under conjugations by birational maps, the algebraic entropies $h_\mathrm{alg}(\phi_a)$ and $h_\mathrm{alg}(\phi_x)$ are defined independently of the choices of cluster charts.   
See \cite[Section 2.4]{IK19} for a detail. 

\begin{thm}[{\cite[Corollary 1.2]{IK19}}]
Let $\phi$ be a mutation loop with a basic sign-stable representation path $\gamma$ starting at $v_0$. Assume that the stable presentation matrix of $\phi$ satisfies the palindromicity property.
Then, we have
\begin{align*}
    h_\mathrm{alg}(\phi_a) = h_\mathrm{alg}(\phi_x) = \log \lambda_\phi^{(v_0)}.
\end{align*}
\end{thm}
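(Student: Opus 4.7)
The plan is to compute both algebraic entropies by reducing to the linear dynamics of the stable presentation matrix via the tropical degeneration of the cluster transformations. Recall that for a birational map $f$ on an algebraic torus, $h_{\mathrm{alg}}(f)=\limsup_{n\to\infty}\frac{1}{n}\log\deg(f^n)$, and for cluster transformations the Newton polytopes of iterated pullbacks are controlled by the tropical action, since the tropical cluster $\mathcal{X}$-transformation records exactly the leading exponents of the birational formula \eqref{eq:X-transf}.

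First I would handle the $\mathcal{X}$-side. Expanding $(\phi_x^n)^*X_i^{(v_0)}$ as a Laurent expression in $\bX^{(v_0)}$, the vertices of its Newton polytope are traced out by the piecewise-linear action of $\phi^n$ on the tropical $\mathcal{X}$-space read in the chart $\bx^{(v_0)}$, applied to suitable standard basis directions. Basic sign stability guarantees that for every $w\in\interior\mathcal{C}^+_{(v_0)}\cup\interior\mathcal{C}^-_{(v_0)}$ the sign of $\gamma$ at $\phi^n(w)$ is eventually the stable sign $\bep^{\mathrm{stab}}$, so for all large $n$ the tropical iterate is linear on these cones and coincides with $(E^{(v_0)}_{\phi})^n$. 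Combined with \cref{thm:Perron-Frobenius}, this produces a diameter bound $(\lambda_\phi^{(v_0)})^n$ on these Newton polytopes up to subexponential factors, giving $h_{\mathrm{alg}}(\phi_x)\le\log\lambda_\phi^{(v_0)}$.

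Next, for the $\mathcal{A}$-side, I would invoke the Fomin--Zelevinsky separation formula to write $(\phi_a^n)^*A_i^{(v_0)}$ as a monomial in the initial $\cA$-variables with exponent vector given by a $g$-vector, times an $F$-polynomial whose monomial support is encoded by a $c$-vector. Both vectors are governed by tropical dynamics, but through the \emph{dual} action on the tropical $\mathcal{A}$-space, whose linearization on the analogous cones is $(E^{(v_0)}_{\phi})^{-\tr}$. This is exactly where the palindromicity hypothesis enters: it identifies the spectral radii of $E^{(v_0)}_{\phi}$ and $(E^{(v_0)}_{\phi})^{-\tr}$, so that both the $g$-vector and the $F$-polynomial contributions grow at the common rate $\lambda_\phi^{(v_0)}$, yielding $h_{\mathrm{alg}}(\phi_a)\le\log\lambda_\phi^{(v_0)}$.

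Finally, I would establish the matching lower bounds by exhibiting, on each side, a test monomial whose pullback degree realizes the upper bound. Choosing initial tropical data close to the Perron eigenvector of $E^{(v_0)}_{\phi}$ — which by \cref{thm:Perron-Frobenius} has strictly positive entries and so lies in the interior of the relevant cone — the leading lattice point of the Newton polytope is pushed along the Perron direction and cannot be cancelled by other contributions. The main obstacle, in my view, is precisely this absence-of-cancellation argument: verifying that the vertex realizing the maximum distance in the Newton polytope of $(\phi^n)^* X_i$ (respectively, of the $F$-polynomial side of $(\phi^n)^* A_i$) is genuinely extremal is where both the Perron positivity and the palindromicity do their substantive work, balancing the forward and backward cone contributions so that no systematic degree collapse occurs.
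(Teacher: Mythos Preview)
This theorem is not proved in the present paper: it is quoted as a black-box result from the authors' earlier work \cite[Corollary 1.2]{IK19} and then applied (via \cref{lem:palind_Coxeter}) to obtain \cref{thm:ent_tau}. There is therefore no proof here against which to compare your proposal.

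For what it is worth, your outline is broadly in the spirit of the argument in \cite{IK19}: the algebraic entropy is bounded by controlling the growth of tropicalized pullbacks, and basic sign stability makes the tropical dynamics eventually linear on $\mathcal{C}^\pm_{(v_0)}$ with matrix $E^{(v_0)}_{\phi}$; the palindromicity hypothesis is precisely what equalizes the spectral radii governing the $\mathcal{A}$- and $\mathcal{X}$-sides (the $\mathcal{A}$-transformation being controlled by $(E^{(v_0)}_{\phi})^{-\tr}$). One caution on your lower-bound step: your appeal to \cref{thm:Perron-Frobenius} for a Perron eigenvector with strictly positive entries overstates what that statement guarantees --- it asserts only that the spectral radius is attained by a positive eigen\emph{value} $\lambda_{\phi,\Omega}\ge 1$, not that the corresponding eigenvector is entrywise positive --- so the claim that the eigenvector lies in $\interior\mathcal{C}^+_{(v_0)}$ and the ensuing non-cancellation argument would require additional justification.
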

When we drop the palindromicity assumption, we still have an estimate between the algebraic entropies and the cluster stretch factor (\cite[Theorem 1.1]{IK19}). 

Let us denote by $A_{Q,W}$ the Ginzburg dg algebra of the quiver $Q$ with a nondegenerate potential $W$, and denote by $\mathsf{D}(A_{Q,W})$, $\mathsf{D}_\mathsf{fd}(A_{Q,W})$ and $\mathsf{per}(A_{Q,W})$ the derived category, the finite-dimensional derived category and the perfect derived category of $A_{Q,W}$, respectively.

\begin{thm}[{\cite[Theorem 1.1]{Kan21}}]
Let $\phi$ be a mutation loop with a basic sign-stable representation path $\gamma$ starting from $v_0$. Assume that the stable presentation matrix of $\phi$ satisfies the palindromicity property. 
Then, there is a derived autoequivalence $F_\phi: \mathsf{D}(A_{Q,W}) \to \mathsf{D}(A_{Q,W})$ such that it is restricted to the subcategories $F_\phi|_{\mathsf{D}_\mathsf{fd}}: \mathsf{D}_\mathsf{fd}(A_{Q,W}) \to \mathsf{D}_\mathsf{fd}(A_{Q,W})$ and $F_\phi|_{\mathsf{per}}: \mathsf{per}(A_{Q,W}) \to \mathsf{per}(A_{Q,W})$ and
\begin{align*}
    h_T(F_\phi|_{\mathsf{D}_\mathsf{fd}}) = h_0(F_\phi|_{\mathsf{per}}) = \log \lambda_\phi^{(v_0)}
\end{align*}
Here, $\{h_T\}_{T \in \bR}$ denotes the categorical entropy.
\end{thm}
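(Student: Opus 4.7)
The plan is to construct $F_\phi$ explicitly as a composition of mutation-induced derived equivalences along the representation path $\gamma$, and then to compute both categorical entropies by reducing them to the spectral radius of the induced linear action on the relevant Grothendieck groups, which under the hypotheses equals $\log\lambda_\phi^{(v_0)}$.

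First, I would build $F_\phi$ step by step. For each edge $v_t \overbar{k_t} v_{t+1}$ of the horizontal representation path $\gamma$, the Keller--Yang construction provides a derived equivalence between the Ginzburg dg algebras of $(Q^{(v_t)}, W^{(v_t)})$ and $(Q^{(v_{t+1})}, W^{(v_{t+1})})$, realized by a canonical bimodule and restricting to both the finite-dimensional and perfect derived subcategories. Composing along $\gamma$ and using the identification $(Q^{(v_h)}, W^{(v_h)}) = (Q^{(v_0)}, W^{(v_0)}) = (Q, W)$ implicit in a representation path of $\phi$ (up to an appropriate modification of the potential, which mutation loops allow), one obtains an autoequivalence $F_\phi$ of $\mathsf{D}(A_{Q,W})$ that preserves both $\mathsf{D}_\mathsf{fd}(A_{Q,W})$ and $\mathsf{per}(A_{Q,W})$.

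Next, I would identify the induced actions of $F_\phi|_{\mathsf{D}_\mathsf{fd}}$ and $F_\phi|_{\mathsf{per}}$ on the respective Grothendieck groups with the presentation matrices of the PL action of $\phi$ on the tropical $\A$- and $\X$-varieties, read off at a cone determined by the tropical sign at a generic point of $\Omega^\mathrm{can}_{(v_0)}$. Basic sign stability of $\gamma$ then implies that after finitely many iterations the PL action linearizes, with stable presentation matrix of spectral radius $\lambda_\phi^{(v_0)}$. The palindromicity hypothesis guarantees that the spectral radii on the two Grothendieck-group sides agree, essentially through the transposition induced by the Euler pairing between $\mathsf{per}$ and $\mathsf{D}_\mathsf{fd}$. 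To conclude the entropy equalities, I would invoke the general lower bounds $h_T(F_\phi|_{\mathsf{D}_\mathsf{fd}}) \geq \log \rho((F_\phi|_{\mathsf{D}_\mathsf{fd}})_*)$ and $h_0(F_\phi|_{\mathsf{per}}) \geq \log \rho((F_\phi|_{\mathsf{per}})_*)$ from the theory of categorical entropy (Kikuta--Takahashi and Ouchi and related work), together with matching upper bounds obtained from the explicit spherical-twist-type description of each elementary mutation equivalence.

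The main obstacle is precisely the upper bound. The lower bound via the Grothendieck group is essentially formal, but bounding the complexity growth of $F_\phi^n G$ for a split generator $G$ requires converting the piecewise-linear tropical information provided by sign stability into categorical estimates on the number of cones in an iterated filtration of $F_\phi^n G$. Basic sign stability gives exactly the polynomial-times-$(\lambda_\phi^{(v_0)})^n$ control needed, and the palindromicity assumption is pivotal because it allows the spectral radius computed on one side (say $K_0(\mathsf{per})$) to govern the complexity growth on the other, thereby unifying the statements for $\mathsf{D}_\mathsf{fd}$ and $\mathsf{per}$ into a single quantitative bound.
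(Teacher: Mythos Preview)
The paper does not prove this theorem. It is quoted verbatim as \cite[Theorem 1.1]{Kan21} and used as a black box in \cref{subsec:entropy}; there is no argument in the present paper to compare your proposal against. Your outline (Keller--Yang mutation equivalences composed along $\gamma$, identification of the induced map on Grothendieck groups with the stable presentation matrix, lower bound via spectral radius, upper bound via sign-stability-controlled growth) is the expected strategy for such a result, but any assessment of its correctness or of how it matches the actual proof would have to be made against \cite{Kan21}, not against this paper.
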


\begin{lem}\label{lem:palind_Coxeter}
The Coxeter matrix $\Phi$ of an acyclic quiver satisfies the palindromicity property.
\end{lem}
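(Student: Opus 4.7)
The plan is to exploit the explicit product formula $\Phi = -M^{-1} M^{\tr}$ and show that $\Phi$ and $(\Phi^{-1})^{\tr}$ are actually conjugate matrices, which immediately implies equality of their characteristic polynomials (and hence the palindromicity property up to overall sign).

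First, I would compute $(\Phi^{-1})^{\tr}$ explicitly from the definition. From $\Phi = -M^{-1} M^{\tr}$ we get
\begin{align*}
    \Phi^{-1} = -(M^{\tr})^{-1} M, \qquad (\Phi^{-1})^{\tr} = -M^{\tr} M^{-1}.
\end{align*}
Next, I would try to conjugate $\Phi$ by $M^{\tr}$ and compare:
\begin{align*}
    M^{\tr} \, \Phi \, (M^{\tr})^{-1} = M^{\tr} \cdot \bigl(-M^{-1} M^{\tr}\bigr) \cdot (M^{\tr})^{-1} = -M^{\tr} M^{-1} = (\Phi^{-1})^{\tr}.
\end{align*}
Thus $\Phi$ and $(\Phi^{-1})^{\tr}$ are similar via the invertible matrix $M^{\tr}$ (the matrix $M$ is unipotent upper-triangular in the admissible labeling, so invertibility is automatic). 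Similar matrices have the same characteristic polynomial, so in particular they agree up to overall sign, which is the required palindromicity property.

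There is no real obstacle here; the proof is just a two-line matrix manipulation once one notices the conjugation. The only thing worth remarking on is that the conjugation uses $M^{\tr}$ rather than $M$, which is the natural bookkeeping given the asymmetric defining formula $\Phi = -M^{-1} M^{\tr}$.
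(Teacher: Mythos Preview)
Your proof is correct and essentially identical to the paper's: the paper records the equation $M^\tr \Phi^{-1} (M^{-1})^{\tr} = \Phi^\tr$, which is just the inverse of your conjugation identity $M^\tr \Phi (M^\tr)^{-1} = (\Phi^{-1})^{\tr}$, and concludes in the same way.
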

\begin{proof}
It is obvious from the equation
\begin{align}\label{eq:Phi_check}
    M^\tr \Phi^{-1} (M^{-1})^{\tr} = \Phi^\tr.
\end{align}
\end{proof}

Therefore, we can compute these kinds of entropies associated with $\tau$ and $\tau^{-1}$ by \cref{cor:clusterDT,cor:clusterDT_inv}:
\begin{thm}\label{thm:ent_tau}
Let $Q$ be a representation infinite acyclic quiver.
Then, we have
\begin{align*}
    h_\mathrm{alg}(\tau_a^{\pm 1}) = h_\mathrm{alg}(\tau_x^{\pm 1}) = h_T(F_{\tau^{\pm 1}}|_{\mathsf{D}_\mathsf{fd}}) = h_0(F_{\tau^{\pm 1}}|_{\mathsf{per}}) = \log \rho(\Phi).
\end{align*}
Here, $\rho(\Phi)$ denotes the spectral radius of the Coxeter matrix $\Phi$ of $Q$.
\end{thm}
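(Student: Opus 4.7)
The plan is to assemble the theorem directly from the three pieces already established in the excerpt: the sign stability of $\gamma_\pi$ and $\gamma_\pi^{-1}$ (\cref{cor:SS_infinite,cor:SS_infinite_inv}), the palindromicity of the Coxeter matrix (\cref{lem:palind_Coxeter}), and the two entropy theorems of \cite{IK19,Kan21} that convert basic sign stability plus palindromicity into equalities of algebraic and categorical entropies with the logarithm of the cluster stretch factor. Essentially no new computation is needed; the work lies in identifying the cluster stretch factors and checking that all four numerical invariants agree on the nose.

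First I would treat $\tau$. By \cref{cor:SS_infinite}, for any admissible labeling $\pi$ the representation path $\gamma_\pi$ is basic sign-stable, and its stable presentation matrix at $v_0$ is the Coxeter matrix $\Phi$. Since $\gamma_\pi$ is fully-mutating (it mutates every vertex exactly once), the Perron--Frobenius statement \cref{thm:Perron-Frobenius} applied to the tame set $\Omega^{\mathrm{can}}_{(v_0)}$ identifies the cluster stretch factor with a positive eigenvalue of $\Phi$ attaining its spectral radius, giving $\lambda_\tau = \rho(\Phi)$. Next, \cref{lem:palind_Coxeter} asserts that $\Phi$ satisfies the palindromicity property. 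Consequently, the cited entropy theorems apply and yield
\[
h_\mathrm{alg}(\tau_a) = h_\mathrm{alg}(\tau_x) = h_T(F_{\tau}|_{\mathsf{D}_\mathsf{fd}}) = h_0(F_{\tau}|_{\mathsf{per}}) = \log \lambda_\tau = \log \rho(\Phi).
\]

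For $\tau^{-1}$ I would repeat the argument with the reversed path $\gamma_\pi^{-1}$. By \cref{cor:SS_infinite_inv} this path is basic sign-stable with stable presentation matrix $\Phi^{-1}$, and again it is fully-mutating, so \cref{thm:Perron-Frobenius} gives $\lambda_{\tau^{-1}} = \rho(\Phi^{-1})$. The key small observation here is that $\rho(\Phi^{-1}) = \rho(\Phi)$: palindromicity means the characteristic polynomials of $\Phi$ and $(\Phi^{-1})^{\tr}$ agree up to sign, so the eigenvalues of $\Phi$ and $\Phi^{-1}$ coincide as multisets (equivalently, they come in pairs $\{\lambda,\lambda^{-1}\}$), forcing equal spectral radii. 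The palindromicity property for $\Phi^{-1}$ itself is immediate from that of $\Phi$, so the entropy theorems again apply and deliver the four equalities for $\tau^{-1}$, all equal to $\log\rho(\Phi)$.

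The only nontrivial bookkeeping is verifying the independence of $v_0$ and confirming that the Perron eigenvalue of $\Phi$ really is attained (not merely bounded). This is where the fully-mutating hypothesis together with \cref{thm:Perron-Frobenius} is essential; once this is in place, everything else is a direct quotation of prior results. I expect no conceptual obstacle, just a careful line-by-line citation.
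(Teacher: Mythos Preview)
Your proposal is correct and follows essentially the same route as the paper: combine the basic sign stability of $\gamma_\pi$ and $\gamma_\pi^{-1}$ (\cref{cor:SS_infinite,cor:SS_infinite_inv}) with the palindromicity of $\Phi$ (\cref{lem:palind_Coxeter}) and invoke the two cited entropy theorems, identifying the cluster stretch factor with $\rho(\Phi)$ via the stable presentation matrix. The paper derives $\rho(\Phi)=\rho(\Phi^{-1})$ directly from the conjugacy \eqref{eq:Phi_check}, which is exactly the content of your palindromicity argument.
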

Here, we note that $\rho(\Phi) = \rho(\Phi^{-1})$ by \eqref{eq:Phi_check}.

\smallskip
By using the presentation of the cluster modular group of $\Gamma_\bs$ given in \cite[Section 3.3]{ASS12}, we can conclude that the algebraic entropy of any mutation loops of representation finite or tame acyclic quiver is zero:
\begin{thm}\label{thm:ent_vanish_tame}
Let $Q$ be a representation finite or tame acyclic quiver.
Then, we have
\begin{align*}
    h_\mathrm{alg}(\phi_a) = h_\mathrm{alg}(\phi_x) = 0
\end{align*}
for any $\phi \in \Gamma_\bs$.
\end{thm}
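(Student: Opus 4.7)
The plan is to combine the structural description of $\Gamma_\bs$ for representation finite or tame acyclic quivers given in \cite[Section 3.3]{ASS12} with \cref{thm:ent_tau} applied to $\tau$. The key observation is that in both cases, the cluster DT transformation itself has zero algebraic entropy: for representation finite quivers $\tau$ has finite order (the exchange graph is finite), while for tame quivers $\rho(\Phi) = 1$ by \cref{thm:trich}, so \cref{thm:ent_tau} gives $h_\mathrm{alg}(\tau_a) = h_\mathrm{alg}(\tau_x) = \log 1 = 0$. The task is then to bootstrap this to arbitrary mutation loops using centrality of $\tau$.

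For the representation finite case, the labeled exchange graph $\bExch_\bs$ has finitely many vertices, since a Dynkin quiver admits only finitely many cluster seeds. Hence $\Gamma_\bs \subset \mathrm{Aut}(\bExch_\bs)$ is a finite group, so every $\phi \in \Gamma_\bs$ satisfies $\phi^m = \mathrm{id}$ for some $m$. The degree sequences $\{\deg(\phi_a^n)\}_n$ and $\{\deg(\phi_x^n)\}_n$ then take only finitely many values, and both algebraic entropies vanish.

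For the tame case, we invoke from \cite[Section 3.3]{ASS12} that $\Gamma_\bs$ is virtually cyclic, with $\langle \tau \rangle$ as a finite-index central subgroup (the quotient being essentially a finite group of diagram symmetries). Given $\phi \in \Gamma_\bs$, choose $N > 0$ so that $\phi^N = \tau^k$ for some $k \in \bZ$. The standard scaling identity $h_\mathrm{alg}(\psi^N) = N \cdot h_\mathrm{alg}(\psi)$, which follows directly from the definition
\[ h_\mathrm{alg}(\psi) = \lim_{n \to \infty} \frac{1}{n} \log \deg(\psi^n), \]
yields
\[ N \cdot h_\mathrm{alg}(\phi_\bullet) = h_\mathrm{alg}(\phi_\bullet^N) = h_\mathrm{alg}(\tau_\bullet^k) = |k| \cdot h_\mathrm{alg}(\tau_\bullet) = 0 \]
for $\bullet \in \{a, x\}$, using \cref{thm:ent_tau} in the last step.

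The main obstacle is the invocation of the correct structural statement from \cite{ASS12}: one must identify $\Gamma_\bs$ in each tame type (notably, type $\tilde A_{p,q}$ requires extra care due to an additional ``rotation'' generator), verify that $\langle \tau \rangle$ has finite index, and confirm that $\tau$ commutes with the finite-order complement so that $\phi^N \in \langle \tau \rangle$ as claimed. Once this structural input is in place, the argument above is essentially formal; the sign stability machinery is only used indirectly, via \cref{thm:ent_tau} and \cref{thm:trich}, to establish the vanishing $h_\mathrm{alg}(\tau_\bullet) = 0$ that drives the bootstrap.
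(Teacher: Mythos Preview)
Your proposal is correct and follows essentially the same approach as the paper: both handle the representation finite case via finite order, and in the tame case reduce to showing that some power of $\phi$ is a power of $\tau$, then appeal to \cref{thm:ent_tau} and the scaling identity for algebraic entropy. The paper carries out the case-by-case verification you flag as the ``main obstacle'', running through the explicit presentations of $\Gamma_\bs$ from \cite[Table 1]{ASS12} for each tame type (including the $\tilde A_{p,q}$ and $\tilde D_{n-1}$ cases) to exhibit the relation $\phi^r = \tau^t$ directly.
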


\begin{proof}
If $Q$ is representation finite, then every mutation loop is of finite order, so the algebraic entropy is zero.
We assume that $Q$ is tame.
We show that for each $\phi \in \Gamma_\bs$, there is $r, t \in \bZ$ such that $\phi^r = \tau^t$.
If it is proved, then we get the statement since $h_\mathrm{alg}(\tau^n_z) = n \cdot h_\mathrm{alg}(\tau_z)$ by the property of the algebraic entropy (for $n \in \bZ_{\geq 0}$) and \cref{thm:ent_tau}. 
For the types $\tilde{D}_4$, $\tilde{E}_6$, $\tilde{E}_7$ and $\tilde{E}_8$, it is obvious since these are in the form of $\langle \tau \rangle \times H$ with finite groups $H$ \cite[Table 1]{ASS12}.

For the type $\tilde{A}_{p, q}$ with $p \neq q$,
one can express $\phi = \tau^a r_1^{k_1} r_2^{k_2}$ for some $a, k_1, k_2 \in \bZ$.
Then, $\phi^{p+q} = \tau^{(a-k_2)p + (a-k_1)q}$ since we have $r_1^{p+q} = \tau^q$ and $r_2^{p+q} = \tau^p$ by the relations in $\Gamma_\bs$.
The case $p=q$ is proved in a similar way.

For the type $\tilde{D}_{n-1}$ with $n>5$, 
one can express $\phi = \tau^a \sigma^b \rho_1^{g_1} \rho_n^{g_n}$ for some $a, b, g_1, g_2 \in \bZ$.
Then, $\phi^2 = \tau^{2a + b(n-3)}$ by the relations in $\Gamma_\bs$.
\end{proof}

\subsection{North-South type dynamics}\label{subsec:North_dyn}
In this subsection, we see a particular dynamical property of the action $\tau: \cX_\bs(\bR^\trop) \to \cX_\bs(\bR^\trop)$.

For an $\bR_{>0}$-invariant subset $\Omega \subset \cX_\bs(\bR^\trop)$, we write $\bS \Omega := (\Omega \setminus \{0\}) / \bR_{>0}$.
Let $[w]\in \bS \Omega$ denote the point represented by $w \in \Omega$.

\begin{thm}
Let $Q$ be a wild acyclic quiver and $\bs := \bs_Q$.
Then, there exist two points $p^\pm_{\tau} \in \bS \cX_\bs(\bR^\trop)$ such that
\begin{align}\label{eq:limit_tau^n}
    \lim_{n \to \infty} [\tau^{\pm n}(w)] = p_\tau^\pm
\end{align}
for all $w \in \cC^+_{(v_0)} \cup \cC^-_{(v_0)}$.
\end{thm}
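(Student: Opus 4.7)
The plan is to reduce the dynamics of $\tau$ on $\cC^{\pm}_{(v_0)}$ to the classical linear dynamics of the Coxeter matrix $\Phi$ on $\bR^I$, where $\rho(\Phi) = \lambda_\tau > 1$ by wildness, and then invoke Perron--Frobenius to obtain the projective convergence.

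\textbf{Linearization.} For $w \in \cC^-_{(v_0)}$, \cref{lem:xi_M}(3)$_n$ gives the exact identity $x^{[0]}(\tau^n(w)) = \Phi^n\, x^{[0]}(w)$ for all $n \geq 0$. For $w \in \cC^+_{(v_0)}$, \cref{cor:clusterDT} sends $\tau(w)$ into $\cC^-_{(v_0)}$, so the same linearization takes over from $n = 1$. Hence, eventually, the projective orbit $\{[\tau^n w]\}$ in $\bS \cX_\bs(\bR^\trop)$ is identified, through the chart $\bx^{[0]}$, with the projective orbit $\{[\Phi^n v]\}$ in $\bS \bR^I$ for some non-zero vector $v$ in the (closed) negative orthant. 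The symmetric argument, using \cref{cor:SS_infinite_inv} and \cref{cor:clusterDT_inv}, linearizes the $\tau^{-1}$-orbit as $\Phi^{-1}$ acting on a non-zero vector in the positive orthant.

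\textbf{Perron--Frobenius input and convergence.} Since $Q$ is wild, I would invoke the strong Perron--Frobenius structure of the Coxeter matrix from \cite{ACam,Rin94}: the spectral radius $\lambda_\tau = \rho(\Phi) > 1$ is a simple eigenvalue strictly larger in modulus than all others, and the corresponding right and left eigenvectors $v^+, \widetilde v^+$ can be chosen with strictly positive entries. Identity~\eqref{eq:Phi_check} transfers these data to $\Phi^{-1}$, producing analogous Perron eigenvectors $v^-, \widetilde v^-$ of spectral radius $\lambda_\tau$. For any non-zero $v$ in the negative orthant, $\langle \widetilde v^+, v\rangle < 0$, so the Jordan decomposition of $\Phi$ yields
\begin{align*}
    \lambda_\tau^{-n}\, \Phi^n v \;\longrightarrow\; \langle \widetilde v^+, v\rangle \cdot v^+ \quad (n \to \infty),
\end{align*}
and thus $[\Phi^n v] \to [-v^+]$ in $\bS \bR^I$, a ray \emph{independent} of $v$. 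Pulling back along $\bx^{[0]}$ defines $p_\tau^+$, and the same argument applied to $\Phi^{-1}$ defines $p_\tau^-$.

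\textbf{Main obstacle.} The delicate ingredient is the spectral statement on $\Phi$ in the wild case. Since the Coxeter matrix is neither non-negative nor manifestly irreducible, the classical Perron--Frobenius theorem does not apply directly; the simplicity of $\lambda_\tau$, its strict dominance in modulus, and the strict positivity of its left/right Perron eigenvectors all rest on the representation-theoretic analyses of \cite{ACam,Rin94} (and may require extracting a little more than the inequality $\lambda_\tau > 1$ used in \cref{thm:trich}). Once that package is in hand, the remainder is standard linear algebra, with the positivity of $\widetilde v^+$ ensuring the non-vanishing of the Perron component for every non-zero starting vector in $\cC^-_{(v_0)}$, and hence (via \cref{cor:clusterDT}) for every non-zero starting vector in $\cC^+_{(v_0)}$ after one iteration.
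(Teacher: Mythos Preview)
Your proposal is correct and follows essentially the same route as the paper: reduce via \cref{lem:xi_M} and \cref{cor:clusterDT} to the linear action of $\Phi$ on a nonzero vector in the closed negative orthant, then use the Perron--Frobenius structure of $\Phi$ in the wild case (simple dominant eigenvalue, strictly positive left eigenvector) to obtain projective convergence, with the positive left eigenvector guaranteeing the nonvanishing of the Perron component. The only noteworthy difference is bibliographic: the paper draws the full spectral package (simplicity, strict dominance, existence of a positive left eigenvector, and the limit of $\rho^{-n}\Phi^n$) from de la Pe\~na's survey \cite{Pena} rather than \cite{ACam,Rin94}, which is precisely the ``little more'' you flagged in your final paragraph.
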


\begin{proof}
We only prove that $[\tau^n(w)] \xrightarrow{n \to \infty} p^+_\tau$.
The other is proved in the same way.
Since $\tau$ maps $\cC^+_{(v_0)}$ to $\cC^-_{(v_0)}$, it suffices to consider the orbit of a point $w \in \cC^-_{(v_0)}$.

Let $\bx_+ = (x_i)_{i \in I} \in \bR^I$ be an eigenvector of the spectral radius $\rho = \rho(\Phi)$ so that $\Phi\bx_+=\rho \bx_+$, which is unique up to multiplication by the Perron--Frobenius property.
Then, we will prove that there exists $\lambda_+ =\lambda_+(w)\neq 0$ such that
\begin{align*}
    \lim_{n \to \infty} \frac{1}{\rho^n} \bx^{(v_0)}(\tau^n(w)) = \lambda_+ \bx_+.
\end{align*}
Let $D := \mathrm{diag}(\bx_+)$ and $P := D^{-1} \Phi D / \rho$.
Then, $P^\infty := \lim_{n \to 0} P^n$ exists and each column of it is a multiple of the vector $(1, \dots, 1)^\tr$ (see \cite[Proof of Theorem 3.5]{Pena}).
Thus,
\begin{align*}
    \frac{1}{\rho^n} \bx^{(v_0)}(\tau^n(w)) 
    =\frac{1}{\rho^n} \Phi^n \bx^{(v_0)}(w)
    =D P^n D^{-1}\bx^{(v_0)}(w)
    \xrightarrow{n \to 0}
    D P^\infty D^{-1}\bx^{(v_0)}(w) = \lambda_+ \bx_+
\end{align*}
where $\lambda_+ = \sum_{i \in I} x_i^{-1} p_i x_i^{(v_0)}(w)$ with $p_i$ the constant value of the $i$th column of $P^\infty$.

It is known that there is $\mathbf{y}_+ \in \bR^I_{>0}$ such that $\Phi^\tr \mathbf{y}_+ = \rho(\Phi) \mathbf{y}_+$ \cite[Theorem 2.1]{Pena}.
Then, 
\begin{align*}
    \lambda_+ \langle \bx_+, \mathbf{y}_+ \rangle
    = \lim_{n \to \infty} \frac{1}{\rho^n} \langle \bx^{(v_0)}(\tau^n(w)) , \mathbf{y}_+ \rangle
    = \lim_{n \to \infty} \frac{1}{\rho^n} \langle \Phi^n \bx^{(v_0)}(w) , \mathbf{y}_+ \rangle
    = \langle \bx^{(v_0)}(w) , \mathbf{y}_+ \rangle <0
\end{align*}
since $w \in \cC^-_{(v_0)}$.
Therefore $\lambda_+ \neq 0$. Then by letting $p_\tau:=[\bx_+]$, we obtain $[\tau^n(w)] \to p_\tau$ in $\bS\X_\bs(\bR^\trop)$. 
\end{proof}

Combining with \cite[Theorem 1.3]{Kan21}, we get the following:

\begin{cor}
The functor $F_\phi|_{\mathsf{D}_{\mathsf{fd}}}$ is pseudo-Anosov with the stretch factor $\rho(\Phi)$ in the sense of \cite{FFHKL}.
\end{cor}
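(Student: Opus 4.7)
The plan is to treat this as an essentially immediate consequence of combining the North-South dynamics theorem just established with \cite[Theorem 1.3]{Kan21}, which asserts that whenever a mutation loop $\phi$ has a basic sign-stable representation path whose tropical action admits the appropriate North-South type attractor-repeller dynamics on $\bS\cX_\bs(\bR^\trop)$ with stretch factor $\lambda>1$, then the associated derived autoequivalence $F_\phi|_{\mathsf{D}_\mathsf{fd}}$ is pseudo-Anosov in the sense of \cite{FFHKL} with the same stretch factor.

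To carry this out, I first check that every hypothesis of \cite[Theorem 1.3]{Kan21} is met in our setting. Since $Q$ is wild acyclic, \cref{thm:trich} gives that $\gamma_\pi$ is basic sign-stable with $\lambda_\tau = \rho(\Phi)>1$, the palindromicity of the stable presentation matrix follows from \cref{lem:palind_Coxeter}, and hence the autoequivalence $F_\tau$ is well-defined via \cite[Theorem 1.1]{Kan21}. The dynamical input -- namely the existence of a pair of attracting/repelling points $p_\tau^{\pm} \in \bS\cX_\bs(\bR^\trop)$ governing the forward and backward orbits -- is supplied verbatim by the preceding theorem in \cref{subsec:North_dyn}, and the multiplicative rate at which orbits converge to these points is exactly $\rho(\Phi)$ by the Perron-Frobenius argument used there.

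Second, I would make explicit the identification of the stretch factor: the convergence rate $\rho(\Phi)$ of $\tfrac{1}{\rho^n}\bx^{(v_0)}(\tau^n w) \to \lambda_+ \bx_+$ in the preceding proof is precisely the numerical datum that \cite{FFHKL} call the stretch factor of a pseudo-Anosov functor, matched via the correspondence between tropical $\cX$-points and the space of stability conditions on $\mathsf{D}_\mathsf{fd}(A_{Q,W})$ used in \cite{Kan21}.

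The main conceptual obstacle, which is already absorbed into the cited \cite[Theorem 1.3]{Kan21}, is the translation between tropical North-South dynamics on $\bS\cX_\bs(\bR^\trop)$ and the genuine categorical pseudo-Anosov condition of \cite{FFHKL} (i.e.\ the existence of invariant stability conditions with mass dilating by a common factor). Since this translation is the content of the cited theorem, the corollary itself requires only checking hypotheses, so I expect the written proof to be a single short paragraph assembling the references.
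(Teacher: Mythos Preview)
Your proposal is correct and matches the paper's approach exactly: the paper gives no proof beyond the single sentence ``Combining with \cite[Theorem 1.3]{Kan21}, we get the following,'' and your write-up simply spells out the hypothesis-checking that this sentence implicitly requires. Your elaboration (basic sign stability from \cref{thm:trich}, palindromicity from \cref{lem:palind_Coxeter}, and the North-South dynamics from the preceding theorem) is accurate and would constitute a more detailed version of what the paper leaves implicit.
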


\begin{ex}\label{ex:D4_dyn}
We again consider the quivers $Q$, $Q'$ and its admissible labelings $\pi$, $\pi'$ in \cref{ex:D4_sign}.
Let $\bs$ and $\bs'$ be the mutation classes containing $Q$ and $Q'$ as the initial quivers, respectively.
We observe the action of $\tau$ on $\bS \cX_\bs(\bR^\trop)$ by tracing the orbit of $\ell \in \cX_\bs(\bR^\trop)$ and $\ell' \in \cX_{\bs'}(\bR^\trop)$ with $\mathbf{x}^{(v_0)}(\ell) = \mathbf{x}^{(v_0)}(\ell') = (1, 1, 1, 1)$ as shown in \cref{tab:D4_dyn}.
Here for $\mathbf{v} \in \bR^I$, we write $u(\mathbf{v}) := \mathbf{v} / \| \mathbf{v} \|$.

The orbit for the $D_4$-quiver $Q$ is again 4-periodic. 
For the wild quiver $Q'$, we may observe that the orbit approaches the unit right $\lambda_\tau$-eigenvector of the Coxeter matrix $\Phi'$, which is given by
\begin{align*}
    (-0.2947575153522..., -0.6983410991536..., -0.1513810480345..., 0.6344458169711...).
\end{align*}

\begin{table}[h]
    \centering
    \begin{tabular}{c|c|c}
    $n$ & $u(\mathbf{x}^{(v_0)}(\tau^n(\ell)))$ & $u(\mathbf{x}^{(v_0)}(\tau^n(\ell')))$ \\
    \hline
    $0$ & $(0.5, 0.5, 0.5, 0.5)$ & $(0.5, 0.5, 0.5, 0.5)$\\
    $1$ & $(-0.5, -0.5, -0.5, -0.5)$ & $(-0.5, -0.5, -0.5, -0.5)$\\
    $2$ & \small$(-0.1690..., -0.6761..., 0.5070..., 0.5070...)$ & \small$(-0.1025..., -0.7181..., -0.3077..., 0.6155...)$ \\
    $3$ & $(-0.8, 0.2, 0.4, 0.4)$ & \small $(-0.3201..., -0.7318..., -0.0914..., 0.5946...)$ \\
    $4$ & $(0.5, 0.5, 0.5, 0.5)$ & \small$(-0.2945..., -0.6812..., -0.1841..., 0.6444...)$\\
    $5$ & $(-0.5, -0.5, -0.5, -0.5)$ & \small$(-0.2877..., -0.7076..., -0.1399..., 0.6299...)$\\
    $6$ & \small$(-0.1690..., -0.6761..., 0.5070..., 0.5070...)$ & \small$(-0.2995..., -0.6946..., -0.1547..., 0.6354...)$\\
    & $\vdots$ & $\vdots$
    \end{tabular}
    \caption{The orbit of $\ell$ and $\ell'$ by the iteration of $\tau$.}
    \label{tab:D4_dyn}
\end{table}

\end{ex}
\section{Proof of \texorpdfstring{\cref{lem:M^tr*Phi,lem:M*Phi^-1}}{Lemmas 3.10 and 3.16}
}
\label{sec:proofs}
Let $k$ be an algebraically closed field.
We consider the path algebra $kQ$ and its right modules.
For $i \in I$, we denote by $S_i$ the simple module at $i$, and by $P_i$, $I_i$ the projective cover and the injective envelope of $S_i$, respectively.
Let $\tau$ denote the Auslander--Reiten translation for the $kQ$-modules. 
For a $kQ$-module $X$, let $\bdim X \in \bZ^N_{\geq 0}$ denote its dimension vector.

The following are the basic facts on the Coxeter matrix $\Phi$:

\begin{lem}
\begin{enumerate}
    \item For a non-projective $kQ$-module $X$, we have $(\bdim \tau X)^\tr = (\bdim X)^\tr \Phi$.
    \item For a non-injective $kQ$-module $X$, we have $(\bdim \tau^{-1} X)^\tr = (\bdim X)^\tr \Phi^{-1}$.
    \item The $i$th column vector of $M$ is equal to $\bdim I_i$.
    \item The $i$th row vector of $M$ is equal to $(\bdim P_i)^\tr$. 
\end{enumerate}
\end{lem}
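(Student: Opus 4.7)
My plan is to establish the four claims in the order (4), (3), (1), (2), since the combinatorial identifications in (3) and (4) allow the Auslander--Reiten translation formulas in (1) and (2) to be read off via the Nakayama functor together with the factorization $\Phi = -M^{-1} M^\tr$.

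For (4), I will use that $P_i = e_i \cdot kQ$ as a right module, so the $j$th component of $P_i$ is $e_i \cdot kQ \cdot e_j$, which has a $k$-basis given by the directed paths from $i$ to $j$ in $Q$; this count is $m_{ij}$ by definition of $M$, so $(\bdim P_i)^\tr$ is the $i$th row of $M$. Part (3) is dual: the injective $I_i$ has $(I_i)_j$ of dimension equal to the number of paths from $j$ to $i$, namely $m_{ji}$, so $\bdim I_i$ is the $i$th column of $M$.

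For (1), let $X$ be a non-projective $kQ$-module (equivalently, a module with no projective summand) and take a minimal projective presentation $P_1 \xrightarrow{f} P_0 \to X \to 0$. Since $kQ$ is hereditary, applying the Nakayama functor $\nu = D\Hom_{kQ}(-,kQ)$, which sends $P_i$ to $I_i$, produces the four-term exact sequence $0 \to \tau X \to \nu P_1 \to \nu P_0 \to \nu X \to 0$. The hypothesis on $X$ forces $\nu X = 0$, giving $\bdim \tau X = \bdim \nu P_1 - \bdim \nu P_0$. Writing $P_k = \bigoplus_i P_i^{\oplus p^k_i}$ and $c := (p^0_i - p^1_i)_i$ as a row vector, part (4) gives $(\bdim X)^\tr = c \cdot M$, so $c = (\bdim X)^\tr M^{-1}$; part (3) then yields $(\bdim \tau X)^\tr = -c \cdot M^\tr = -(\bdim X)^\tr M^{-1} M^\tr = (\bdim X)^\tr \Phi$. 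Part (2) is handled dually: starting from a minimal injective copresentation $0 \to X \to I^0 \to I^1 \to 0$ and applying the inverse Nakayama functor $\nu^{-1}: I_i \mapsto P_i$, the same bookkeeping gives $(\bdim \tau^{-1} X)^\tr = -(\bdim X)^\tr M^{-\tr} M$, and the identity $\Phi^{-1} = -M^{-\tr} M$ follows by directly inverting $\Phi = -M^{-1} M^\tr$.

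The only genuinely delicate point is the vanishing of $\nu X$ (resp. $\nu^{-1} X$) for modules without projective (resp. injective) summands; this is a standard feature of hereditary algebras that I would cite from \cite{ASS} rather than reprove. Everything else is routine matrix manipulation, the main hazard being consistency of row/column and sign conventions between $M$, $M^\tr$, and $\Phi$.
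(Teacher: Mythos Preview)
Your argument is correct and essentially reproduces the standard proof found in the reference the paper cites. Note that the paper does not actually prove this lemma: it simply points to \cite[Corollary IV.2.9 and Proposition III.3.8]{ASS}. So there is no alternative approach to compare against; you have supplied the details the paper chose to omit.

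Two minor remarks. First, your parenthetical ``equivalently, a module with no projective summand'' is not literally an equivalence (a module can be non-projective yet have a projective direct summand); what you need, and what you correctly use, is the hypothesis \emph{no projective summand}, which is also how the cited result in \cite{ASS} is stated (for indecomposables). This does not affect the paper's application, since the lemma is only invoked for the indecomposable modules $I_i$, $P_i$ and their translates. Second, the four-term exact sequence you write down holds for arbitrary finite-dimensional algebras; hereditariness enters only to guarantee that the minimal projective presentation is a short exact sequence (so that $\bdim X = \bdim P_0 - \bdim P_1$) and that $\nu X = 0$ via the splitting argument you allude to. Your proof already uses these facts in the right places; the phrasing just slightly misattributes where hereditariness is needed.
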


See {\cite[Corollary IV.2.9 and Proposition III.3.8]{ASS}} for a proof. 


\begin{proof}[Proof of \cref{lem:M^tr*Phi,lem:M*Phi^-1}]
For $n \geq 0$, we have
\begin{align*}
    M^\tr \Phi^n 
    &= \begin{pmatrix}
        (\bdim I_0)^\tr\\
        \vdots\\
        (\bdim I_{N-1})^\tr
    \end{pmatrix}
    \Phi^n
    = \begin{pmatrix}
        (\bdim \tau^n I_0)^\tr\\
        \vdots \\
        (\bdim \tau^n I_{N-1})^\tr
    \end{pmatrix},\\
    M \Phi^{-n} 
    &= \begin{pmatrix}
        (\bdim P_0)^\tr\\
        \vdots\\
        (\bdim P_{N-1})^\tr
    \end{pmatrix}
    \Phi^{-n}
    = \begin{pmatrix}
        (\bdim \tau^{-n} P_0)^\tr\\
        \vdots \\
        (\bdim \tau^{-n} P_{N-1})^\tr
    \end{pmatrix}.
\end{align*}
The modules $\tau^n I_i$ (resp. $\tau^n P_i$) are non-trivial $kQ$-modules since $Q$ is representation infinite.
(These are called preinjective and preprojective modules in \cite[Chapter VIII]{ASS}, respectively.)
In particular, the dimension vectors $\bdim \tau^n I_i$ and $\bdim \tau^{-n} P_i$ are non-negative vectors and hence each entries of $M^\tr \Phi^n$ and $M \Phi^{-n}$ are non-negative.
\end{proof}

\if0

\subsection{Proof of \cref{lem:J_n}}
The statement is equivalent to the following:
\begin{lem}
Let $n \geq 0$.
The following hold:
\begin{enumerate}\label{lem:sign_Phi}
    \item $\phi^{(n+1)}_{jk} \geq 0$ for any $j \in J_n^\circ$ and $k \in I$.
    \item $\phi^{(n+1)}_{ik} \leq 0$ for any $i \in I \setminus J_n$ and $k \in I$.
\end{enumerate}
\end{lem}

It is clear that \cref{lem:sign_Phi} is a sufficient condition for \cref{lem:J_n}.
By the sign coherence of $c$-vectors, \cref{lem:sign_Phi} is also a necessary condition for \cref{lem:J_n}.

\begin{proof}
First note that we have
\begin{enumerate}
\renewcommand{\labelenumi}{(\roman{enumi})}
    \item $\phi^{(n)}_{jk} \geq 0$ for any $j \in J_n$ and $k \in I$.
    \item $\phi^{(n)}_{ik} \leq 0$ for any $i \in I \setminus J_n$ and $k \in I$.
\end{enumerate}
from the definition of $J_n$.

\noindent
(1):
Let $j \in J_n^\circ$.
In the case $\ell \precceq j$, 
\begin{align}
    \phi_{j \ell} = -m_{\ell j} + \sum_{i \succov j} a_{ji} m_{\ell i} \geq \sum_{i \succov j} (-1 + a_{ji}^2) m_{\ell j} \geq 0. \label{eq:phi>=0}
\end{align}
In the case $\ell \not\!\!\precceq j$, $\phi_{j\ell} = \sum_{i \succov j} a_{ji} m_{\ell i} \geq 0$.
Since $\ell \in J_n$ if $i \succov j$ and $\ell \prec i$, we have $a_{ji} = 0$ or $m_{\ell i} = \delta_{\ell i}$ if $\ell \in I \setminus J_n$.
Thus, for $\ell \in I \setminus J_n$,
\begin{align*}
    \sum_{i \succov j} a_{ji} m_{\ell i} = \sum_{i \succov j} a_{ji} \delta_{\ell i} = a_{j \ell} = 0
\end{align*}
since $j \in J_n^\circ$.
Therefore, 
\begin{align*}
    \phi^{(n+1)}_{jk} 
    = \sum_{\ell \in J_n} \phi_{j \ell} \phi^{(n)}_{\ell k} - \sum_{\ell \in I \setminus J_n} m_{\ell j} \phi_{\ell k}^{(n)} \geq 0
\end{align*}
by (i) and (ii) above.

\noindent
(2):
Let $i \in I \setminus J_n$.
We note that the $i$th row vector of $\Phi^n$ is $(\bdim S_i)^\tr \Phi^n$.
By (ii), the simple module $S_i$ is preprojecive since any modules are preprojective, preinjective, or regular.
Indeed, if $S_i$ is preinjective or regular, then $(\phi^{(m)}_{ik})_{k \in I} = (\bdim S_i)^\tr \Phi^m$ is nonnegative for any $m \geq 0$.
From the definition of the preprojectiveness, there is $n_0 >0$ such that $\tau^{n_0} S_i = P_i$.
Therefore, now $n > n_0$ since 
\begin{align*}
    (\phi^{(n)}_{ik})_{k \in I} = (\bdim S_i)^\tr \Phi^n = (\bdim \tau^n S_i)^\tr = -(\bdim \tau^{n- n_0- 1} I_i) \in \bZ_{\leq 0}^N
\end{align*}
by \cref{lem:tau_Phi}.
Thus, we have $(\phi^{(n+1)}_{ik})_{k \in I} = -(\bdim \tau^{n- n_0} I_i) \in \bZ^n_{\leq 0}$.
\end{proof}

\fi


\begin{thebibliography}{LLMSS20}
\bibitem[ASS]{ASS}
I. Assem, D. Simson and A. Skowroński
\emph{Elements of the representation theory of associative algebras. Vol. 1. Techniques of representation theory},
London Mathematical Society Student Texts, 65. Cambridge University Press, Cambridge, 2006.

\bibitem[ASS12]{ASS12}
I. Assem, R. Schiffler and V. Shramchenko,
\emph{Cluster automorphisms},
Proc. Lond. Math. Soc. (3) \textbf{104} (2012), 1271--1302.

\bibitem[A'Cam72]{ACam}
N. A'Campo,
\emph{Sur les valeurs propres de la transformation de Coxeter},
Invent. Math. \textbf{33} (1972), 61--67.

\bibitem[BS15]{BS}
T. Bridgeland and I. Smith,
\emph{Quadratic differentials as stability conditions},
Publ. Math. Inst. Hautes \'Etudes Sci., \textbf{121} (2015), 155--278.

\bibitem[BV99]{BV99}
M. P. Bellon and C.-M. Viallet,
\emph{Algebraic entropy},
Comm. Math. Phys. {\bf 204} (1999), 425--437.



\bibitem[DF]{DF}
P. Donovan and M. R. Freislich,
\emph{The representation theory of finite graphs and associated algebras},
Carleton Mathematical Lecture Notes, No. 5. Carleton University, Ottawa, ON, 1973.

\bibitem[FFHKL21]{FFHKL}
Y. Fan, S. Filip, F. Haiden, L. Katzarkov and Y. Liu,
\emph{On pseudo-Anosov autoequivalences},
Adv. Math. 384 (2021), Paper No. 107732.

\bibitem[FG09]{FG09}
V. V. Fock and A. B. Goncharov, 
\emph{Cluster ensembles, quantization and the dilogarithm},
Ann. Sci. \'Ec. Norm. Sup\'er., \textbf{42} (2009), 865--930.

\bibitem[FST08]{FST}
S. Fomin, M. Shapiro and D. Thurston,
{\em Cluster algebras and triangulated surfaces. {I}. Cluster complexes},
Acta Math. \textbf{201} (2008), 83--146.

\bibitem[FSTT14]{FSTT14}
S. Fomin, M. Shapiro, H. Thomas and D. Thurston,
\emph{Growth rate of cluster algebras},
Proc. Lond. Math. Soc. (3) \textbf{109} (2014), 653--675.

\bibitem[GK21]{GK21}
Z. Greenberg and D. Kaufman,
\emph{Cluster modular groups of affine and doubly extended cluster algebras},
preprint, arXiv:2107.10334.

\bibitem[GS18]{GS18} 
A. B. Goncharov and L. Shen,
 {\em Donaldson-Thomas transformations of moduli spaces of $G$-local systems},
Adv. Math. \textbf{327} (2018), 225--348. 

\bibitem[TST12]{FST12}
A. Felikson, M. Shapiro and P. Tumarkin,
\emph{Skew-symmetric cluster algebras of finite mutation type},
J. Eur. Math. Soc. (JEMS) \textbf{14} (2012), 1135--1180.

\bibitem[FZ02]{FZ-CA1}
S. Fomin and A. Zelevinsky,  
\emph{Cluster algebras. I. Foundations}, 
J. Amer. Math. Soc. \textbf{15} (2002), 497--529. 

\bibitem[Gab72]{Gab}
P. Gabriel,
\emph{Unzerlegbare Darstellungen I},
Manuscripta Math. \textbf{6} (1972), 71--103.

\bibitem[GN22]{GN}
M. Gekhtman and T. Nakanishi,
\emph{Asymptotic sign coherence conjecture},
Exp. Math. \textbf{31} (2022), 497--505.

\bibitem[Ish20]{Ish20}
T. Ishibashi,
\emph{Presentations of cluster modular groups and generation by cluster Dehn twists},
SIGMA Symmetry Integrability Geom. Methods Appl. 16 (2020), Paper No. 025.

\bibitem[IK20]{IK20a}
T. Ishibashi and S. Kano,
\emph{Sign stability of mapping classes on marked surfaces I: empty boundary case},
preprint, arXiv:2010.05214.

\bibitem[IK21]{IK19}
T. Ishibashi, S. Kano,
\emph{Algebraic entropy of sign-stable mutation loops},
Geom. Dedicata 214 (2021), 79--118. 

\bibitem[IK]{IK}
T. Ishibashi and S. Kano,
\emph{Sign stability, Perron--Frobenius property and dynamics},
in preparation.

\bibitem[Kac]{Kac}
K. G. Kac,
\emph{Infinite-dimensional Lie algebras. An introduction},
Progress in Mathematics, 44. Birkhäuser Boston, Inc., Boston, MA, 1983.

\bibitem[Kan21]{Kan21}
S. Kano,
\emph{Categorical dynamical systems arising from sign-stable mutation loops},
preprint, arXiv:2105.08332,

\bibitem[Kel1]{Kel_tri}
B. Keller,
\emph{Cluster algebras, quiver representations and triangulated categories},
Triangulated categories, 76--160, London Math. Soc. Lecture Note Ser., 375, Cambridge Univ. Press, Cambridge, 2010.

\bibitem[Kel2]{Kel_der}
B. Keller,
\emph{Cluster algebras and derived categories},
EMS Ser. Congr. Rep., Eur. Math. Soc., Z\"urich, 2012, 123--183.

\bibitem[KS]{KS}
M. Kontsevich and Y. Soibelman,
\emph{Stability structures, motivic Donaldson-Thomas invariants and cluster transformations},
preprint, arXiv:1006.2706.


\bibitem[LLMSS20]{LLMSS}
K. Lee, L. Li, M. Mills, R. Schiffler and A. Seceleanu,
\emph{Frieze varieties: a characterization of the finite-tame-wild trichotomy for acyclic quivers}, Adv. Math. \textbf{367} (2020), 107130.


\bibitem[Naz73]{Naz}
L. A. Nazarova,
\emph{Representations of quivers of infinite type},
Izv. Akad. Nauk SSSR Ser. Mat. \textbf{37} (1973), 752–791.

\bibitem[Pe\~na]{Pena}
J. A. de la Pe\~na,
\emph{Coxeter transformations and the representation theory of algebras},
Finite-dimensional algebras and related topics (Ottawa, ON, 1992), 223--253, NATO Adv. Sci. Inst. Ser. C: Math. Phys. Sci., \textbf{424}, Kluwer Acad. Publ., Dordrecht, 1994.

\bibitem[Pen]{Penner}
R. C. Penner, 
\emph{Decorated {T}eichm\"uller theory}, 
QGM Master Class Series, European Mathematical Society (EMS), Z\"urich, 2012.

\bibitem[Rin94]{Rin94}
C. M. Ringel,
\emph{The spectral radius of the Coxeter transformations for a generalized Cartan matrix},
Math. Ann. \textbf{300} (1994), 331--339.






\end{thebibliography}
\end{document}